\numberwithin{equation}{section}
\newtheorem{theorem}{Theorem}[section]
\newtheorem{lemma}{Lemma}[section]
\newtheorem{Def}{Definition}[section]
\newtheorem{rem}{Remark}[section]
\newtheorem{prop}{Proposition}[section]
\newtheorem{cor}{Corollary}[section]
\def\d{\partial}
\def\R{\Bbb R}
\def\N{\Bbb N}
\def\Dx{\Delta_x}
\def\Dt{\partial_t}
\def\({\left(}
\def\){\right)}
\def\eb{\varepsilon}
\def\e{\epsilon}
\def\Cal{\mathcal}
\def\a{\alpha}
\def\E{\mathcal{E}}
\def\Ee{\mathcal{E}_{\{\eb,x_0\}}}
\def\Elr{\mathcal{E}_{\langle x_0\rangle}}
\def\A{\mathcal{A}}
\begin{document}
\title[Infinite energy solutions for fractional damped wave equation]{Infinite energy solutions for critical wave equation with fractional damping in unbounded domains}
\author[] {Anton Savostianov}

\begin{abstract}
This work is devoted to infinite-energy solutions of semi-linear wave equations in unbounded smooth domains of $\R^3$ with fractional damping of the form $(-\Dx+1)^\frac{1}{2}\Dt u$. The work extends previously known results for bounded domains in finite energy case. Furthermore, well-posedness and existence of locally-compact smooth attractors for the critical quintic non-linearity are obtained under less restrictive assumptions on non-linearity, relaxing some artificial technical conditions used before. This is achieved by virtue of new type Lyapunov functional that allows to establish extra space-time regularity of solutions of Strichartz type. 
\end{abstract}

\subjclass[2000]{35B40, 35B45, 35L70}

\keywords{damped wave equation, fractional damping, global attractor, smoothness, Strichartz estimates}
\thanks{The author is deeply grateful to Prof. Sergey Zelik for numerous fruitful discussions and insights concerning uniformly local spaces and infinite-energy solutions for PDEs. The author would like to thank the University of Surrey for hospitality and financial support, where essential part of this work was written.}
\address{Universit\'e de Cergy-Pontoise, CNRS UMR 8088, D\'epartement de Math\'ematiques, Cergy-Pontoise, F-95000, France.}
\email{anton.savostianov@u-cergy.fr}

\maketitle
\section{Introduction}
\label{s.intro}
In this work we study extra regularity and long-time behaviour of infinite-energy solutions in an unbounded smooth domain $\Omega\subset\R^3$ to the following problem 
\begin{equation}  
\begin{cases}
\label{eq intro}
\Dt^2u-\Dx u+\lambda_0 u+\gamma(-\Dx+1)^\theta\Dt u+f(u)=g,\ x\in\Omega,\\
u|_{\d\Omega}=0, u|_{t=0}=u_0,\ \Dt u|_{t=0}=u_1,
\end{cases}
\end{equation}
where constants $\gamma$ and $\lambda_0$ are strictly positive, $\theta=\frac{1}{2}$ and $u=u(t,x)$ is an unknown function. Further we assume that external force $g=g(x)$ belongs to $L^2_b(\Omega)$ and initial data $(u_0,u_1)\in \E_b$ defined as follows
\begin{equation}
\E_b=H^1_b(\Omega)\cap\{u|_{\d\Omega}=0\}\times L^2_b(\Omega),
\end{equation}
where spaces $L^2_b(\Omega)$ and $H^1_b(\Omega)$ are uniformly local analogues of $H^1(\Omega)$ and $L^2(\Omega)$ (see Section \ref{s.sp} for details). Non-linearity $f\in C^1(\R)$ is of critical growth and satisfies natural dissipative assumptions
\begin{align}
\label{f.growth}
&|f'(s)|\leq C(1+|s|^q),\\
\label{f(s)s>-M}
&f(s)s\geq -M,
\end{align} 
with $q=4$. Finally $-\Dx$ denotes usual laplacian and $(-\Dx+1)^\frac{1}{2}$ is operator $-\Dx+1$ to the power $\frac{1}{2}$ (see Section \ref{s.cest} for the details). 

Wave equations with fractional damping arise in situations when waves propagate through a lossy media, for example fractal rock layers, human tissues, different biomedical materials (see \cite{chen}, \cite{tree} and references therein). In contrast to usual damped wave equation ($\theta =0$) the fractional damping term $(-\Dx+1)^\theta \Dt u$ with $\theta>0$ allows to model processes in which which the higher frequencies decay faster than the lower frequencies. This can be easily seen in the linear homogeneous case on torus or in the whole space by means of Fourier series or Fourier transform. 

Let us give a brief review of known results for semi-linear wave equation with fractional damping in finite-energy case. 

It appears that even in linear case properties of wave equation with fractional damping significantly depend on parameter $\theta$. For instance, if $\theta =0$ we have finite speed of propagation property (see \cite{bk Sogge}). Also it is known that wave equation with fractional damping generates analytic semi-group if and only if $\theta\in[\frac{1}{2},1]$ (see \cite{tri1}, \cite{tri2}). When $\theta\in(0,1)$ wave equation with fractional damping possesses smoothing property similar to parabolic equations see \cite{KZ2009}, \cite{S ADE}, \cite{SZ2014}. In case $\theta=1$ there is instantaneous smoothing for $\Dt u$ and asymptotic smoothing for $u$ (see \cite{CV}).

In presence of non-linearity $f$ even the question of well-posedness of semi-linear wave equation with fractional damping becomes non-trivial. Indeed, in this case well-posedness of problem \eqref{eq intro} essentially depends not only on $\theta$ but also on growth exponent $q$ of the non-linearity (see \eqref{f.growth}). Let us first remind the known results when $\theta\in [0,1]\setminus \{\frac{1}{2}\}$. For a long time $q=4$ was considered as critical when $\theta\in[\frac{1}{2},1]$ (see \cite{carc3} and references therein). A great progress for the strongly damped wave equation ($\theta =1$) as well as the case $\theta\in[\frac{3}{4},1]$ was done in \cite{KZ2009}. In this case it was shown (see \cite{KZ2009}) that the mentioned problem is well-posed and possesses smooth attractor for {\it arbitrary} finite exponent $q\geq 0$ if $f\in C^1(\R)$ is such that
\begin{equation}
\label{f.growth2}
-C+a|s|^q\leq f'(s)\leq C(1+|s|^q),
\end{equation}
for some constants $a>0$ and $C\geq 0$.
In addition it was shown in \cite{KZ2009} that well-posedness and smooth attractor theory are valid if $\theta \in(\frac{1}{2},\frac{3}{4})$ and $q+2\leq\frac{6}{3-4\theta}$, that also improves exponent $4$.
Well-posedness and smooth attractor theory for the weakly damped wave equation ($\theta=0$) with critical non-linearity $q=4$ is considered in \cite{KSZ}. This result is based on substantial progress in Strichartz estimates for pure wave equation in bounded domains (see \cite{Sogge2009}, \cite{stri}) and trajectory attractor theory for weakly damped wave equation developed in \cite{ZelDCDS}. Let us remind that Strichartz estimates are space-time estimates where $L^p([0,T];L^r(\Omega))$-norm of the solution is controlled via energy norm of initial data and right-hand side for some admissible $p$ and $r$. For example when $\theta=0$ one may take $p=4$ and $r=12$. Furthermore, based on Strichartz type estimates, well-posedness and smooth attractor theory were obtained in sub-critical case $q\in[0,4)$ when $\theta\in(0,\frac{1}{2})$ (see \cite{S ADE}), whereas $q=3$ had been considered as critical before.   

Let us consider more carefully the case $\theta =\frac{1}{2}$. It appears that to get well-posedness and develop smooth attractor theory in this case one needs extra regularity similar to Strichartz-type estimates that does not follow directly from energy type inequality (the one obtained by multiplication of the equation \eqref{eq intro} by $\Dt u$). In \cite{SZ2014} it was shown that in fact any energy solution to problem \eqref{eq intro} belongs to $L^2([0,T];H^\frac{3}{2}(\Omega))$ under assumption
\begin{equation}
\label{f'>-c}
f'(s)\geq -C,
\end{equation}
and for arbitrary $q>0$ (restriction $q\leq 4$ is needed for the uniqueness but not for the extra regularity), if one considers periodic boundary conditions. The main observation used to obtain this extra regularity is the fact that actually one can multiply equation \eqref{eq intro} by $(-\Dx)^\frac{1}{2} u$. Then, in periodic case or in the whole space $\R^3$, similar to inequality
\begin{equation}
(f(u),-\Dx u)\geq -C\|u\|^2_{H^1(\Omega)},
\end{equation}
which is a consequence of \eqref{f'>-c} one can also get, again based on \eqref{f'>-c}, the following inequality (see \cite{SZ2014})
\begin{equation}
\label{fDx^1/2>-C1/2}
(f(u),(-\Dx)^\frac{1}{2} u)\geq -C\|u\|^2_{H^\frac{1}{2}(\Omega)}.
\end{equation}
And consequently the most problematic non-linear term in fact is not dangerous. Furthermore, in \cite{SZ2014} extra $L^2([0,T];H^\frac{3}{2}(\Omega))$-regularity was also obtained in case of Dirichlet boundary conditions. But, in case of Dirichlet boundary conditions, we do not know analogues of inequality \eqref{fDx^1/2>-C1/2}. Thus to obtain $L^2([0,T];H^\frac{3}{2}(\Omega))$-regularity in this case it was proposed to reduce the case of Dirichlet boundary conditions to the whole space $\R^3$ by making odd extension through the boundary. This in turn resulted in quite technical computations and artificial assumptions on non-linearity: oddness of $f$ and \eqref{f.growth2} instead of \eqref{f.growth} and \eqref{f'>-c}.  

Considering the case of unbounded domains it is natural to work with the class of so called infinite-energy solutions (the rigorous definition for our problem can be found in Section \ref{s.ex}, see also \cite{MZ Dafer 2008}). This is related to the fact that on the one hand if we considered our problem just in usual energy space $H^1_0(\Omega)\times L^2(\Omega)$, analogously to the case of bounded domains, then such choice of space would implicitly imply some decay condition in space variable at infinity for our solutions and therefore such solutions would be in some sense localised in space that looks restrictive. On the other hand space $L^\infty(\Omega)$ is also not convenient and usually is used in cases where maximum principle works. A good alternative was found in using uniformly local spaces (see \cite{MZ Dafer 2008} and references therein) which basic properties are collected in Section \ref{s.sp}. Infinite-energy solutions were constructed and studied for various problems of mathematical physics: reaction-diffusion systems (see \cite{Z sp.com}, \cite{Z entropy}), Cahn-Hilliard equation (\cite{JZ}), weakly damped wave equations (see \cite{F}, \cite{Z hypunbdd}), strongly damped wave equation (\cite{YSieswv}) and even Navier-Stokes equations (see \cite{AZ NStrp}, \cite{Z NSR2}, \cite{G.NS14} and therein) and dissipative Euler equation (see \cite{CZ} and therein). In \cite{F} infinite-energy solutions and corresponding attractor theory were constructed for autonomous weakly damped wave equation. The non-autonomous case as well as systematic study of Kolmogorov's $\eb$-entropy in both autonomous and non-autonomous case of weakly damped wave equation is presented in \cite{Z hypunbdd}. The attractor theory for the strongly damped wave equation (problem \eqref{eq intro} with $\theta=1$) and quintic non-linearity is developed in \cite{YSieswv}.    

The aim of this work is to extend this result to the case of infinite energy solutions in case of Dirichlet problem \eqref{eq intro} with $\theta=\frac{1}{2}$ in unbounded domains as well as relax assumptions \eqref{f.growth2}, oddness of $f$ and even \eqref{f'>-c} to less restrictive assumptions \eqref{f.growth} and \eqref{f(s)s>-M}. We notice, that when assumption \eqref{f'>-c} fails, but assumption \eqref{f(s)s>-M} is still valid, for example $\frac{\sin(u^5)}{u}$, it is natural to expect, that problem $\eqref{eq intro}$ does not possess finite-energy solutions if $\Omega$ is \emph{unbounded}. Therefore, the class of \emph{infinite-energy} solutions becomes the natural one in this case. In contrast to \cite{SZ2014}, the extra $L^2([0,T];\Cal H^{3/2}_{\{\eb,x_0\}}(\Omega))$-regularity, which is analogue of $L^2([0,T];H^{3/2}(\Omega))$-regularity in bounded domains, is achieved by one more preparatory step. Namely, we get $L^4([0,T];L^{12}_{\{\eb,x_0\}}(\Omega))$-norm control (weighted analogue of $L^4([0,T];L^{12}(\Omega))$) first. This is possible due to new type Lyapunov functional for problem \eqref{eq intro} that can be obtained by multiplication of \eqref{eq intro} by, roughly speaking, $u^3$ which is the main novelty of the work (see Theorems \ref{th adreg1.bdd}, \ref{th adreg1}). 
Obtained $L^4([0,T];L^{12}_{\{\eb,x_0\}}(\Omega))$-regularity implies that non-linear term $f(u)$ belongs to $L^1([0,T];L^2_{\{\eb,x_0\}}(\Omega))$ and hence we can multiply equation \eqref{eq intro} by, roughly speaking, $(-\Dx+1)^\frac{1}{2}u$ that gives desired $L^2([0,T];\Cal H^{3/2}_{\{\eb,x_0\}}(\Omega))$-regularity, but already without inequality \eqref{fDx^1/2>-C1/2}. Subsequent smoothing property (see Section \ref{s.sm}) and attractor theory (Section \ref{s.a}) for problem \eqref{eq intro} in unbounded domains can be then obtained in more or less standard way following the arguments from \cite{SZ2014}. 

The work is organised as follows. In Section \ref{s.sp} we collect basic definitions and properties of weighted and uniformly local spaces that are used throughout the paper. Commutator estimates involving fractional laplacian which are necessary for the work in weighed spaces are proven in Section \ref{s.cest}. Section \ref{s.ex} highlights the result on the existence of the infinite-energy solutions for problem \eqref{eq intro}. In Section \ref{s.exreg.fe} we explain the idea of the obtaining $L^4([0,T];L^{12}(\Omega))$-estimate for the problem \eqref{eq intro} in the simple case, when $\Omega$ is bounded. The key theorem of the work (Theorem \ref{th adreg1}) regarding $L^4([0,T];L^{12}_{\{\eb,x_0\}}(\Omega))$ extra regularity of infinite-energy solutions to problem \eqref{eq intro} is proven in Section \ref{s.exreg.ie}.
Based on the obtained extra regularity, uniqueness of solutions to problem \eqref{eq intro} is verified in Section \ref{s.u}. Smoothing property of infinite-energy solutions is established in Section \ref{s.sm}. Existence of locally compact smooth attractor is proven in Section \ref{s.a}. Finally, Appendix \ref{sec.Appendix} contains the proof of the dissipative estimate for the problem \eqref{eq intro} in the infinite-energy case where non-linearity $f$ satisfies \eqref{f.growth} and \eqref{f(s)s>-M} only. This proof is based on a variant of Gronwall-type inequality which can be of independent interest by itself.   
\section{Weighted and uniformly local spaces}
\label{s.sp}
In this section we introduce and recall basic properties of the family of weight functions and corresponding weighted Sobolev spaces as well as intimate relation between weighted Sobolev spaces and uniformly local spaces.

\begin{Def}
\label{def fsexpg}
Let $\mu >0$ be arbitrary. A function $\phi\in L^\infty_{loc}(\R^n)$ to be called a weight function of exponential growth $\mu$ iff $\phi(x)>0$ and there holds inequality
\begin{equation}
\label{2.1}
\phi(x+y)\leq C_\phi e^{\mu |y|}\phi(x),
\end{equation}
for every $x,y\in\R^n$.
\end{Def}
\begin{rem}
\label{rem 2.1}
One can easily check that if function $\phi$ is of exponential growth $\mu$ then so is the function $1/\phi$ with the same constant $C_\phi$. In other words \eqref{2.1} implies
\begin{equation}
\phi(x+y)\geq C_\phi^{-1}e^{-\mu|x|}\phi(y),
\end{equation}
for every $x,y \in\R^n$.
\end{rem}
\begin{prop} (see \cite{EZ2001})
Let $\phi_1$ and $\phi_2$ be weight functions of exponential growth $\mu_1$ and $\mu_2$ respectively. Then

(i) $\alpha\phi_1+\beta\phi_2$, $\max\{\phi_1 ,\phi_2 \}$ and $\min\{\phi_1,\phi_2\}$ are weight functions of exponential growth $\max\{\mu_1,\mu_2\}$ for every $\alpha$, $\beta>0$;

(ii) $\phi_1\cdot\phi_2$ and $\phi_1\cdot \phi_2^{-1}$ are weight functions of exponential growth $\mu_1+\mu_2$;

(iii) $\phi_1^\a$ is a weight function of exponential growth $|\a|\mu_1$.
\end{prop}

The main examples of weight functions of exponential growth are $e^{-\eb|x-x_0|}$, its smooth analogue $e^{-\eb\sqrt{1+|x-x_0|^2}}$ and $(1+|x-x_0|^2)^\a$ where $\eb$ and $\a$ belong to $\R$. It is easy to see that the first two examples are functions of exponential growth $|\eb|$ and the last one is the weight function of exponential growth $\mu$ for arbitrary $\mu>0$. For us the weight $\phi_{\eb,x_0}(x)=e^{-\eb\sqrt{1+|x-x_0|^2}}$ with small enough $\e>0$ will be of particular interest.

\begin{Def}\label{def Lp phi}
Let $\Omega$ be some unbounded domain in $\R^n$, and let $\phi$ be a weight function of exponential growth $\mu$. Then the space $L^p_\phi$ is defined as follows
\begin{equation}
L^p_\phi(\Omega)=\left\{u\in \Cal D'(\Omega): \|u,\Omega\|^p_{\phi,0,p}\equiv \int_\Omega\phi^p(x)|u(x)|^pdx<\infty\right\}.
\end{equation} 
\end{Def}

Analogously we define $W^{l,p}_{\phi}(\Omega)$, $l\in \N$, as the space of those distributions which derivatives up to the order $l$ belong to $L^p_\phi(\Omega)$. In particular we use notation $W^{l,p}_{\{\eb,x_0\}}(\Omega)$  if the corresponding weight is $e^{-\eb\sqrt{1+|x-x_0|^2}}$. Also we denote by $\|\cdot\|_{\{\eb,x_0\},l,p}$ the norm in $W^{l,p}_{\{\eb,x_0\}}(\Omega)$. 

We also define so called uniformly local Sobolev spaces, which are
also of special interest to us,
\begin{Def}
\label{def Wlpb}
Let $\Omega\subset\R^n$ be an unbounded domain. Then 
\begin{equation}
W^{l,p}_b(\Omega)=\left\{u\in \Cal D'(\Omega): \|u,\Omega\|_{b,l,p}=\sup_{x_0\in\R^n}\|u,\Omega\cap B^1_{x_0}\|_{l,p}<\infty\right\}.
\end{equation}
\end{Def}
Here and below $B^R_{x_0}$ stands for the ball in $\R^n$ of radius $R$  with center at $x_0$ and $\|u,V\|_{l,p}$ means $\|u\|_{W^{l,p}(V)}$ for a domain $V\subset \R^n$.
 
\begin{theorem}\label{th wsp1} (see \cite{EZ2001})
Let $u\in L^p_\phi(\Omega)$ where $\phi$ is a weight function of exponential growth $\mu>0$. Then for any $1\leq q\leq \infty$ the following estimate is valid:
\begin{equation}
\label{2.2}
\(\int_\Omega\phi(x_0)^{pq}\(\int_{\Omega}e^{-p\eb|x-x_0|}|u(x)|^p\)^qdx_0\)^\frac{1}{q}\leq C\int_\Omega\phi^p(x)|u(x)|^pdx,
\end{equation}
for every $\eb>\mu$, where the constant $C$ depends only on $\eb$, $\mu$ and $C_\phi$ from \eqref{2.1} (and is independent of $\Omega$).
\end{theorem}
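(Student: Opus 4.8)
The plan is to reduce the weighted integral inequality \eqref{2.2} to a purely pointwise statement about the weight $\phi$, and then to apply Minkowski's integral inequality for the $L^q$-norm together with an elementary $L^1$ convolution bound. First I would fix $\eb>\mu$ and use Definition~\ref{def fsexpg}: writing $x=x_0+(x-x_0)$ in \eqref{2.1} gives $\phi(x_0)\leq C_\phi e^{\mu|x-x_0|}\phi(x)$, hence
\begin{equation}
\phi(x_0)^p e^{-p\eb|x-x_0|}|u(x)|^p \leq C_\phi^p\, e^{-p(\eb-\mu)|x-x_0|}\,\phi(x)^p|u(x)|^p .
\end{equation}
So, setting $v(x):=\phi(x)^p|u(x)|^p\in L^1(\Omega)$ and $K(z):=e^{-p(\eb-\mu)|z|}$, the inner integral in \eqref{2.2} is bounded by $C_\phi^p\,(K*\tilde v)(x_0)$, where $\tilde v$ is $v$ extended by zero to all of $\R^n$; note $K\in L^1(\R^n)$ precisely because $\eb>\mu$, with $\|K\|_{L^1(\R^n)}$ depending only on $\eb-\mu$ and $n$.

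The next step handles the outer $x_0$-integral. I would distinguish the two extreme cases and then interpolate, or — more cleanly — just treat general $q$ at once. For $q<\infty$, since the quantity raised to the power $q$ is a convolution in $x_0$, I apply Minkowski's integral inequality in the form $\|K*\tilde v\|_{L^q(dx_0)}\leq \|K\|_{L^1}\|\tilde v\|_{L^q}$; but we need an $L^1$-type output on the right-hand side, not $L^q$. The correct route is to bound one factor of $K*\tilde v$ by $\|K\|_{L^1}\|\tilde v\|_{L^\infty}$... which is not available either. Instead, the honest argument is: write $K*\tilde v$ and raise to the $q$-th power, then use that for a convolution with an $L^1$ kernel one has the pointwise bound via Jensen, $(K*\tilde v)(x_0)^q \le \|K\|_{L^1}^{q-1}\,(K* \tilde v^{\,q})(x_0)$ when $q\ge 1$; integrating in $x_0$ and using Fubini gives $\int (K*\tilde v)^q\,dx_0 \le \|K\|_{L^1}^{q}\int \tilde v^{\,q}$. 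That is the wrong power of $v$ on the right. The resolution — and this is the one genuine subtlety — is that $q$ in \eqref{2.2} multiplies $\phi$ but the right-hand side is $\int\phi^p|u|^p$ to the first power; so one factor $\phi(x_0)^{pq}$ must be distributed: $\phi(x_0)^{pq}=\phi(x_0)^{p}\cdot\phi(x_0)^{p(q-1)}$, and on the support of the inner integral $\phi(x_0)^{p(q-1)}\le C_\phi^{p(q-1)}e^{\mu p(q-1)|x-x_0|}\phi(x)^{p(q-1)}$ again by \eqref{2.1}. Pulling this out enlarges the kernel exponent but it stays integrable as long as $\eb$ is taken strictly larger than $\mu$ (shrinking the gap only worsens the constant, not the integrability), and now the remaining $\phi(x)^{pq}$...

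Let me restate the clean version: the key inequality to prove is, for each $x_0$,
\begin{equation}
\phi(x_0)^{pq}\(\int_\Omega e^{-p\eb|x-x_0|}|u(x)|^p\,dx\)^q \le C\(\int_\Omega e^{-p\eb'|x-x_0|}\phi(x)^p|u(x)|^p\,dx\)\(\int_\Omega \phi^p|u|^p\,dx\)^{q-1},
\end{equation}
for some $\mu<\eb'<\eb$, using \eqref{2.1} to trade $\phi(x_0)$ for $\phi(x)$ at the cost of exponentials $e^{\mu|x-x_0|}$ inside each of the $q$ integral factors — one keeps a small exponential gap $e^{-p(\eb-\eb')|x-x_0|}\cdot\text{(gap)}$ in one factor and simply uses $e^{-p\eb|x-x_0|}e^{p\mu|x-x_0|}\le 1$ would-be-wrong, so instead one keeps the gap $\eb-\mu>0$ in \emph{all} factors and writes $\big(\int e^{-p\eb|x-x_0|}|u|^p\big)^q = \big(\int e^{-p\eb|x-x_0|}|u|^p\big)\big(\int e^{-p\eb|x-x_0|}|u|^p\big)^{q-1}$, bounding $\phi(x_0)^p$ against the first factor to produce $\int e^{-p(\eb-\mu)|x-x_0|}\phi(x)^p|u|^p$ and $\phi(x_0)^{p(q-1)}$ against the others to produce, after $e^{-p(\eb-\mu)|x-x_0|}\le 1$, a factor bounded by $C_\phi^{p(q-1)}(\int\phi^p|u|^p)^{q-1}$. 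Then I integrate in $x_0$, use Fubini on the single surviving exponential, and $\int_{\R^n}e^{-p(\eb-\mu)|x-x_0|}\,dx_0<\infty$ closes the estimate with constant depending only on $\eb,\mu,C_\phi,p,q,n$ and independent of $\Omega$. The case $q=\infty$ is easier and follows by taking $q\to\infty$ or directly bounding the $\sup$ pointwise. The main obstacle, as the discussion above shows, is purely bookkeeping: correctly splitting the power $\phi(x_0)^{pq}$ so that exactly one factor carries the decaying exponential needed for Fubini while the others are absorbed into $(\int\phi^p|u|^p)^{q-1}$; once that split is organized, everything else is a one-line convolution/Fubini computation.
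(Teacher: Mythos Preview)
The paper does not supply its own proof of this theorem; it simply quotes the result from \cite{EZ2001}. So there is nothing in the paper to compare against, and the only question is whether your argument stands on its own.

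Your final ``clean version'' is correct, although the path you take to get there is unnecessarily circuitous. The detours through Minkowski for convolutions and Jensen are dead ends, as you yourself observe; you should delete them. The one-line argument is: by \eqref{2.1},
\[
\phi(x_0)^{p}\int_\Omega e^{-p\eb|x-x_0|}|u(x)|^p\,dx
\;\le\; C_\phi^{p}\int_\Omega e^{-p(\eb-\mu)|x-x_0|}\phi(x)^p|u(x)|^p\,dx,
\]
so that, writing $A(x_0)$ for the right-hand side and $I:=\int_\Omega\phi^p|u|^p$,
\[
\phi(x_0)^{pq}\Bigl(\int_\Omega e^{-p\eb|x-x_0|}|u|^p\Bigr)^{q}
\;\le\; A(x_0)^{q}
\;=\; A(x_0)\cdot A(x_0)^{\,q-1}
\;\le\; A(x_0)\,\bigl(C_\phi^{p}I\bigr)^{q-1},
\]
where in the last step one uses $e^{-p(\eb-\mu)|x-x_0|}\le 1$. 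Integrating in $x_0$ and applying Fubini to the single surviving factor $A(x_0)$ gives
\[
\int_\Omega A(x_0)\,dx_0 \;\le\; C_\phi^p\,\Bigl(\int_{\R^n}e^{-p(\eb-\mu)|z|}\,dz\Bigr)\,I,
\]
and taking the $q$-th root finishes the case $q<\infty$; the $q=\infty$ case is immediate from $A(x_0)\le C_\phi^p I$. Note this works for all real $q\ge 1$, not just integers, since the splitting $A^q=A\cdot A^{q-1}$ requires no integrality.

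Two minor points worth fixing in your write-up: (i) the constant you obtain is $C_\phi^{p}\bigl(\int_{\R^n}e^{-p(\eb-\mu)|z|}dz\bigr)^{1/q}$, which is bounded uniformly in $q\ge 1$ by $C_\phi^p\max\{1,\int_{\R^n}e^{-p(\eb-\mu)|z|}dz\}$, so the constant indeed depends only on $\eb,\mu,C_\phi$ (and $p,n$), independently of $q$ and $\Omega$; (ii) the aside about introducing an intermediate $\eb'$ with $\mu<\eb'<\eb$ is unnecessary---you never need it, and it only clutters the argument.
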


For further properties of weighted Sobolev spaces defined above we need some restrictions related to smoothness of the boundary of the domain and in particular its smoothness "at infinity". First, we assume there exists such a number $R_0>0$ that for every $x\in\Omega$ there is a smooth domain $V_x\subset\Omega$ such that
\begin{equation}
\label{2.6}
B^{R_0}_x\cap\Omega\subset V_x\subset B^{R_0+1}_x\cap \Omega.
\end{equation}
This assumption allows to avoid, for example, such pathologies as infinitely thin cuts going to infinity and boundaries that behave like $\sin(x^2)$. Second, we assume that there exists a diffeomorphism $\theta_x: B^1_0 \to V_x$ such that
\begin{equation}
\label{2.7}
\|\theta_x\|_{C^N}+\|\theta_x^{-1}\|_{C^N}\leq K,
\end{equation}
uniformly with respect to $x\in\Omega$ for sufficiently large $N$. This assumption, for example, allows to exclude infinitely thin rays. In general these assumptions are standard and for bounded domains this is equivalent to the fact that the boundary is smooth enough manifold.  
The above mentioned assumptions supposed to be valid throughout the work with $R_0=2$ (for simplicity). 
\begin{theorem}(see \cite{EZ2001})\label{th wsp2}
Let domain $\Omega$ satisfies conditions \eqref{2.6} and \eqref{2.7}, $\phi$ be a function of exponential growth and let $R>0$ be a fixed number. Then for any $p\in[1;\infty)$ the following estimates are valid:
\begin{equation}
C_1\int_\Omega \phi^p(x)|u(x)|^pdx\leq \int_{\Omega}\phi^p(x_0)\int_{\Omega\cap B^R_{x_0}}|u(x)|^pdxdx_0\leq C_2\int_{\Omega}\phi^p(x)|u(x)|^pdx.
\end{equation}
\end{theorem}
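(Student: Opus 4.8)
The plan is to prove both inequalities by interchanging the order of integration — Tonelli's theorem applies since all functions involved are non-negative and measurable — and thereby reduce everything to a pointwise two-sided estimate for
$$
I(x):=\int_{\{x_0\in\Omega:\ |x-x_0|<R\}}\phi^p(x_0)\,dx_0 .
$$
Indeed, Fubini rewrites the middle term of the asserted chain as $\int_\Omega|u(x)|^p\,I(x)\,dx$, so it suffices to establish $C_1\,\phi^p(x)\le I(x)\le C_2\,\phi^p(x)$ for every $x\in\Omega$ with $C_1,C_2$ independent of $x$.

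For the upper bound I would simply invoke the exponential growth of $\phi$: for $x_0\in B^R_x$, inequality \eqref{2.1} gives $\phi(x_0)\le C_\phi e^{\mu|x_0-x|}\phi(x)\le C_\phi e^{\mu R}\phi(x)$, whence $I(x)\le C_\phi^p e^{p\mu R}|B^R|\,\phi^p(x)$, which is the right-hand inequality. (Equivalently, this direction is immediate from Theorem \ref{th wsp1} with $q=1$, since $\mathbf 1_{B^R_{x_0}}(x)\le e^{p\eb R}e^{-p\eb|x-x_0|}$ for any $\eb>\mu$; note it uses neither \eqref{2.6} nor \eqref{2.7}.)

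For the lower bound I would use the reverse exponential growth recorded in Remark \ref{rem 2.1}: for $x_0\in B^R_x$ one has $\phi(x_0)\ge C_\phi^{-1}e^{-\mu|x_0-x|}\phi(x)\ge C_\phi^{-1}e^{-\mu R}\phi(x)$, so $I(x)\ge C_\phi^{-p}e^{-p\mu R}\,|\Omega\cap B^R_x|\,\phi^p(x)$. Hence the left-hand inequality follows as soon as one knows that the local chunk $\Omega\cap B^R_x$ does not degenerate in volume, i.e. $\kappa:=\inf_{x\in\Omega}|\Omega\cap B^R_x|>0$, after which one takes $C_1=C_\phi^{-p}e^{-p\mu R}\kappa$.

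The main obstacle is exactly this uniform lower volume bound — everything else is bookkeeping — and it is what the structural hypotheses \eqref{2.6}, \eqref{2.7} are designed to supply. When $R\ge R_0+1$ the argument is short: by \eqref{2.6} the smooth subdomain $V_x$ satisfies $V_x\subset B^{R_0+1}_x\cap\Omega\subset B^R_x\cap\Omega$, while \eqref{2.7} forces $|\det D\theta_x|\ge c(K)>0$ on $B^1_0$ (as $\|\theta_x^{-1}\|_{C^1}\le K$), so $|\Omega\cap B^R_x|\ge|V_x|=\int_{B^1_0}|\det D\theta_x|\,dy\ge c(K)\,|B^1_0|$ uniformly in $x$. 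For an arbitrary fixed $R>0$ I would instead pull back by $\theta_x^{-1}$: this exhibits $\Omega$, near each of its points, as a uniformly bi-Lipschitz image of $B^1_0$, hence as a domain satisfying a uniform interior cone (``thickness'') condition along $\d\Omega$, which again yields the required uniform lower bound on $|\Omega\cap B^R_x|$ for small $R$. This near-infinity uniformity of the geometry is the one delicate point; see \cite{EZ2001}.
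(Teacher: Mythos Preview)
Your argument is correct. The Fubini reduction to the pointwise estimate $C_1\phi^p(x)\le I(x)\le C_2\phi^p(x)$ is the natural route, and both directions are handled properly: the upper bound uses only \eqref{2.1}, while the lower bound reduces to the uniform volume estimate $\inf_{x\in\Omega}|\Omega\cap B^R_x|>0$, which you correctly identify as the only nontrivial ingredient and derive from the diffeomorphisms in \eqref{2.6}--\eqref{2.7}. Your treatment of the case $R\ge R_0+1$ via the Jacobian bound $|\det D\theta_x|\ge c(K)$ is clean; for small $R$ you are right that the uniform bi-Lipschitz local charts give a uniform interior cone (or corkscrew) condition, from which the volume lower bound follows by a standard scaling argument.

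As for comparison with the paper: the paper does \emph{not} prove Theorem~\ref{th wsp2} at all --- it merely quotes the result from \cite{EZ2001}. So there is no ``paper's own proof'' to compare against; your proof is what one would expect to find in the cited reference, and the approach is the standard one.
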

Applying Theorem \ref{th wsp2} to derivatives of $u$ we get:
\begin{cor}
\label{cor wsp3} (see \cite{EZ2001})
Let $l\in\Bbb{N}$, $p\in[1;\infty)$ and the domain $\Omega$ satisfies \eqref{2.6}, \eqref{2.7}. Then an equivalent norm in $W^{l,p}_{\phi}(\Omega)$ is given by the following expression:
\begin{equation}
\label{2.8}
\|u,\Omega\|_{\phi,l,p}=\(\int_\Omega\phi^p(x_0)\|u,\Omega\cap B^R_{x_0}\|^p_{l,p}dx_0\)^\frac{1}{p}.
\end{equation}
In particular we obtain that norms \eqref{2.8} are equivalent for different $R>0$.
\end{cor}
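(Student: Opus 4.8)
The plan is to reduce the statement to Theorem \ref{th wsp2}, applied not to $u$ itself but separately to each of its distributional derivatives, and then to sum the resulting inequalities.

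First I would recall that the norm of $W^{l,p}_\phi(\Omega)$ is
\begin{equation}
\|u,\Omega\|^p_{\phi,l,p}=\sum_{|\beta|\le l}\int_\Omega\phi^p(x)|D^\beta u(x)|^p\,dx,
\end{equation}
so that $u\in W^{l,p}_\phi(\Omega)$ means exactly $D^\beta u\in L^p_\phi(\Omega)$ for every multi-index $\beta$ with $|\beta|\le l$. Fixing $R>0$, I would then apply Theorem \ref{th wsp2} to each function $D^\beta u\in L^p_\phi(\Omega)$ in place of $u$, obtaining
\begin{equation}
C_1\int_\Omega\phi^p(x)|D^\beta u(x)|^p\,dx\le\int_\Omega\phi^p(x_0)\int_{\Omega\cap B^R_{x_0}}|D^\beta u(x)|^p\,dx\,dx_0\le C_2\int_\Omega\phi^p(x)|D^\beta u(x)|^p\,dx,
\end{equation}
with $C_1,C_2$ depending only on $p$, $R$, $\mu$ and $C_\phi$ from \eqref{2.1}, and in particular independent of which $\beta$ is taken. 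Summing over the finitely many $\beta$ with $|\beta|\le l$, interchanging the (finite) sum with the $x_0$-integral by linearity, and noting that $\sum_{|\beta|\le l}\int_{\Omega\cap B^R_{x_0}}|D^\beta u(x)|^p\,dx=\|u,\Omega\cap B^R_{x_0}\|^p_{l,p}$, I would arrive at
\begin{equation}
C_1\,\|u,\Omega\|^p_{\phi,l,p}\le\int_\Omega\phi^p(x_0)\|u,\Omega\cap B^R_{x_0}\|^p_{l,p}\,dx_0\le C_2\,\|u,\Omega\|^p_{\phi,l,p},
\end{equation}
which says precisely that the right-hand side of \eqref{2.8} defines an equivalent norm on $W^{l,p}_\phi(\Omega)$.

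For the last assertion, the equivalence of \eqref{2.8} for two radii $R_1,R_2>0$ is then immediate: by the displayed bound each of them is two-sidedly comparable to the $R$-free quantity $\|u,\Omega\|_{\phi,l,p}$, hence to the other, with constants depending only on $p$, $R_1$, $R_2$, $\mu$ and $C_\phi$. There is no genuine difficulty here; the only point deserving attention is the uniformity of the constants in Theorem \ref{th wsp2} across the derivatives $D^\beta u$ --- since those constants depend only on $p$, $R$, $\mu$, $C_\phi$ and not on the function being estimated, the finitely many inequalities for $|\beta|\le l$ may be summed without any loss, which is exactly what makes the reduction go through.
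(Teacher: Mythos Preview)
Your argument is correct and is exactly the approach indicated in the paper: the corollary is stated there as an immediate consequence of applying Theorem~\ref{th wsp2} to the derivatives of $u$, which is precisely what you carry out in detail.
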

Moreover, using Theorem \ref{th wsp2} we get equivalent norms for uniformly local Sobolev spaces (see \cite{Z entropy}):
\begin{cor}\label{cor wsp4}
Let domain $\Omega$ satisfy assumptions \eqref{2.6}, \eqref{2.7} and $u\in W^{l,p}_b(\Omega)$. Then for any $\eb>0$ we have
\begin{equation}
C_1\|u,\Omega\|^p_{b,l,p}\leq\sup_{x_0\in\Omega}\left\{\int_{x\in\Omega}e^{-p\eb|x-x_0|}\|u,\Omega\cap B^1_x\|^p_{l,p}dx\right\}\leq C_2\|u,\Omega\|^p_{b,l,p}.
\end{equation} 
\end{cor}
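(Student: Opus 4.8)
The plan is to reduce both inequalities, via Corollary~\ref{cor wsp3}, to elementary estimates for the translated exponential weights $\phi_{x_0}(x):=e^{-\eb|x-x_0|}$, $x_0\in\R^n$. The key observation that makes this work is that each $\phi_{x_0}$ is a weight function of exponential growth $\eb$ and, being a translate of $e^{-\eb|x|}$, has a constant $C_{\phi_{x_0}}$ in \eqref{2.1} independent of $x_0$; hence all the equivalence constants that Theorem~\ref{th wsp2} and Corollary~\ref{cor wsp3} attach to these particular weights are uniform in $x_0$, and may be pulled through the supremum.

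For the right-hand (upper) inequality I would simply note that $\|u,\Omega\cap B^1_x\|^p_{l,p}\le\|u,\Omega\|^p_{b,l,p}$ for every $x$ by Definition~\ref{def Wlpb}, so that
\begin{equation*}
\int_{x\in\Omega}e^{-p\eb|x-x_0|}\|u,\Omega\cap B^1_x\|^p_{l,p}\,dx\le\|u,\Omega\|^p_{b,l,p}\int_{\R^n}e^{-p\eb|y|}\,dy,
\end{equation*}
and the last integral is finite (it equals $\omega_{n-1}\Gamma(n)(p\eb)^{-n}$). Taking the supremum over $x_0\in\Omega$ gives the constant $C_2$; no hypothesis on $\Omega$ beyond $\eb>0$ is used here.

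For the left-hand (lower) inequality I would fix $x_0\in\Omega$ and apply Corollary~\ref{cor wsp3} with $\phi=\phi_{x_0}$ and $R=1$: the quantity $\int_{\Omega}e^{-p\eb|x-x_0|}\|u,\Omega\cap B^1_x\|^p_{l,p}\,dx$ is then comparable, with $x_0$-independent constants, to the genuinely pointwise weighted norm $\|u,\Omega\|^p_{\phi_{x_0},l,p}=\int_\Omega e^{-p\eb|x-x_0|}\sum_{|\a|\le l}|D^\a u(x)|^p\,dx$. Dropping the part of $\Omega$ lying outside $B^2_{x_0}$, where $e^{-p\eb|x-x_0|}\ge e^{-2p\eb}$, bounds this from below by $c\,e^{-2p\eb}\|u,\Omega\cap B^2_{x_0}\|^p_{l,p}$, with $c>0$ independent of $x_0$. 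Finally, for an arbitrary $z\in\R^n$ with $\Omega\cap B^1_z\ne\varnothing$ I would choose $x_0\in\Omega\cap B^1_z$, so that $B^1_z\subset B^2_{x_0}$ and hence $\|u,\Omega\cap B^2_{x_0}\|^p_{l,p}\ge\|u,\Omega\cap B^1_z\|^p_{l,p}$; taking the supremum over such $z$ recovers $\|u,\Omega\|^p_{b,l,p}$ by Definition~\ref{def Wlpb}, and this supremum is dominated by the supremum over $x_0\in\Omega$, which produces $C_1$.

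I do not expect a genuine obstacle here: all of the real work is already packaged in Theorem~\ref{th wsp2}/Corollary~\ref{cor wsp3} (which is where the geometric assumptions \eqref{2.6}, \eqref{2.7} on $\Omega$ are consumed). The only two points that deserve a word of justification are the uniformity of the constants over the whole family $\{\phi_{x_0}\}_{x_0\in\R^n}$ — legitimate precisely because the class of weights of exponential growth $\eb$ is translation invariant with a common constant $C_\phi$ — and the harmless passage from the supremum over $x_0\in\Omega$ in the statement to the supremum over $x_0\in\R^n$ in Definition~\ref{def Wlpb}, which is absorbed by replacing $B^1$ with $B^2$ at the cost of an extra factor $e^{-p\eb}$ in $C_1$.
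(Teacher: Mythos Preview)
Your proposal is correct and follows essentially the same approach the paper indicates: the paper does not spell out a proof but merely says ``using Theorem~\ref{th wsp2}'' and refers to \cite{Z entropy}, and your argument is precisely the natural elaboration of this --- apply Corollary~\ref{cor wsp3} (i.e., Theorem~\ref{th wsp2} for derivatives) with the translated exponential weights $\phi_{x_0}$, exploit that the constant $C_{\phi_{x_0}}$ in \eqref{2.1} is independent of $x_0$, and then pass to the supremum. Your handling of the discrepancy between $\sup_{x_0\in\Omega}$ and $\sup_{x_0\in\R^n}$ via the $B^1\subset B^2$ inclusion is the standard device and introduces no gap.
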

We see that representation \eqref{2.8} reduces weighted Sobolev norm to Sobolev norm on bounded domains. Particularly, this gives the benefit of using standard Sobolev embeddings theorems for bounded domains (see \cite{Zstr2007}). Moreover, in analogy to \eqref{2.8} we are able to define fractional weighted Sobolev spaces that will play an important role in the sequel
\begin{Def} (see \cite{EZ2001})
Let $s\geq 0$, $p\in[1;\infty)$ and $R>0$ be fixed numbers and domain $\Omega$ enjoys \eqref{2.6} and \eqref{2.7}. The $W^{s,p}_\phi(\Omega)$ is defined by
\begin{equation}
\label{2.9}
W^{s,p}_\phi(\Omega)=\left\{u\in D'(\Omega): \int_\Omega\phi^p(x_0)\|u,\Omega\cap B^R_{x_0}\|^p_{s,p}dx_0<\infty\right\},
\end{equation}
so the corresponding norm is given by \eqref{2.8} with $s$ instead of $l$. 
\end{Def}
Let us remind that Sobolev norm in $W^{s,p}(V)$ for a domain $V$ with non-integer positive $s$ can be defined by
\begin{equation}
\label{2.10}
\|u,V\|^p_{s,p}=\|u,V\|^p_{[s],p}+\sum_{|\alpha|=[s]}\int_{x\in V}\int_{y\in V}\frac{|\d^\alpha u(x)-\d^\alpha u(y)|^p}{|x-y|^{n+\{s\}p}}dxdy,
\end{equation} 
where $[s]$ and $\{s\}$ denote integer and fractional part of $s$ respectively. It is not difficult to check that norms defined by \eqref{2.9} are equivalent for different $R>0$ as well as \eqref{2.9} indeed gives usual
norm for $W^{s,p}(\Omega)$ if we take $\phi\equiv 1$ and $\Omega$ is a smooth bounded domain (see \cite{EZ2001}). Hence the above definition is natural. Analogously to the integer case we use notations $W^{s,p}_{\{\eb,x_0\}}(\Omega)$ and  $W^{s,p}_b(\Omega)$.
 
\section{Commutator estimates for the fractional laplacian}
\label{s.cest}
This section is devoted to commutator estimates for the fractional laplacian involving functions of exponential growth.

Let $\eb\in \R$ and $x_0\in\Omega$ be fixed and $\Dx$ denotes the usual laplacian. The operator 
$$-\Dx+1:L^2_{\{\eb,x_0\}}(\Omega)\to L^2_{\{\eb,x_0\}}(\Omega)$$ 
endowed with Dirichlet boundary conditions can be considered as an unbounded operator in $L^2_{\{\eb,x_0\}}(\Omega)$ with the domain of definition $\Cal H^2_{\{\eb,x_0\}}:=\Cal D(-\Dx+1)$. One of the possible rigorous definitions of $(-\Dx+1)^\theta$ is as follows (see \cite{yos}, Chapter IX, Section 11; see also \cite{LM,triebel} for the descriptions of the domains of definitions)
\begin{Def}
\label{Def fp}
The operator $(-\Dx+1)^\theta$, for $\theta\in(0,1)$ and $\eb\in \R$, is understood as unbounded operator in $L^2_{\{\eb,x_0\}}(\Omega)$ defined by the formula
\begin{equation}
\label{fp formula}
(-\Dx+1)^\theta u=\frac{1}{\Gamma(-\theta)}\int_0^\infty\lambda^{-\theta -1}\(e^{-(-\Dx +1)\lambda}-1\)ud\lambda,\qquad \theta\in (0,1),
\end{equation}
where $\Gamma$ is the standard gamma function, with the domain of definition $\Cal H^{2\theta}_{\{\eb,x_0\}}(\Omega):=\Cal D((-\Dx+1)^\theta)$, which in terms of Sobolev spaces can be described as follows
\begin{align}
\label{D(Dx)<1/4}
&\Cal H^{2\theta}_{\{\eb,x_0\}}(\Omega)=H^{2\theta}_{\{\eb,x_0\}}(\Omega),\ \rm{when}\ \theta\in\left(0,\frac{1}{4}\right);\\
\label{D(Dx)1/4}
&\Cal H^\frac{1}{2}_{\{\eb,x_0\}}(\Omega)=H^\frac{1}{2}_{\{\eb,x_0\}}(\Omega)\cap \left\{u:\ \int_\Omega\frac{\phi^2_{\eb,x_0}(x)|u(x)|^ 2}{d(x)}dx<\infty\right\},\ {\rm where}\ d(x)={\rm dist}(x,\partial\Omega);\\
\label{D(Dx)>1/4}
&\Cal H^{2\theta}_{\{\eb,x_0\}}(\Omega)=H^{2\theta}_{\{\eb,x_0\}}(\Omega)\cap \left\{u:u|_{\partial\Omega}=0\right\},\ {\rm when}\ \theta\in\bigg(\bigg.\frac{1}{4},1\bigg.\bigg].
\end{align}
\end{Def} 
One can check that this definition coincides with the usual definition of the laplacian when $\Omega$ is a smooth bounded domain or the whole space. We also recall that $\Cal H^{2\theta}_{\{\eb,x_0\}}(\Omega)$ with $\theta\in [0,1]$ and $\eb\in\R$ are Hilbert spaces and $\Cal H^{-2\theta}_{\{-\eb,x_0\}}(\Omega):=\left(\Cal H^{2\theta}_{\{\eb,x_0\}}(\Omega)\right)^*$ can be understood as dual to $\Cal H^{2\theta}_{\{\eb,x_0\}}(\Omega)$.

The next commutator estimate will be used frequently throughout the work. 
\begin{prop}
\label{prop comest1}
Let $\theta\in(0,\frac{1}{2})$, $\eb\in(0,\eb_0]$ with small enough $\eb_0$ and $u\in L^2_{\{\eb,x_0\}}(\Omega)$. Then the following commutator estimate  holds true
\begin{equation}
\|(-\Dx+1)^\theta(\phi_{\eb,x_0} u)-\phi_{\eb,x_0}(-\Dx+1)^\theta u\|_{L^2(\Omega)}\leq C\eb^\frac{1}{2}2^{\frac{1}{2}-\theta}\frac{\Gamma\left(\frac{1}{2}-\theta\right)}{|\Gamma(-\theta)|}\|\phi_{\eb,x_0}u\|_{L^2(\Omega)},
\end{equation}
for some absolute constant $C$, where  $\phi_{\eb,x_0}(x)=e^{-\eb\sqrt{1+|x-x_0|^2}}$.
\end{prop}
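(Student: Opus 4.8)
The plan is to substitute the integral representation \eqref{fp formula} into the commutator and exploit the exact cancellation of the constant terms. Write $\phi:=\phi_{\eb,x_0}$ and $S(\lambda):=e^{-(-\Dx+1)\lambda}=e^{-\lambda}e^{\lambda\Dx}$, so that $S(\lambda)$ is the Dirichlet heat semigroup rescaled by $e^{-\lambda}$. For $u\in C_0^\infty(\Omega)$ (and then, by density and continuity, for every $u\in L^2_{\{\eb,x_0\}}(\Omega)$), applying \eqref{fp formula} to $\phi u$ and to $u$ gives, since $(-1)(\phi u)-\phi(-1)u=0$,
\begin{equation}
(-\Dx+1)^\theta(\phi u)-\phi(-\Dx+1)^\theta u=\frac{1}{\Gamma(-\theta)}\int_0^\infty\lambda^{-\theta-1}\(S(\lambda)(\phi u)-\phi S(\lambda)u\)\,d\lambda.
\end{equation}
By Minkowski's inequality it then suffices to establish the pointwise-in-$\lambda$ estimate
\begin{equation}
\Psi(\lambda):=\|S(\lambda)(\phi u)-\phi S(\lambda)u\|_{L^2(\Omega)}\le C\eb^{1/2}\lambda^{1/2}e^{-\lambda/2}\|\phi u\|_{L^2(\Omega)},
\end{equation}
since then $\int_0^\infty\lambda^{-\theta-1}\cdot\lambda^{1/2}e^{-\lambda/2}\,d\lambda=2^{1/2-\theta}\Gamma\(\tfrac12-\theta\)$, the integral converging precisely because $\theta<\tfrac12$, and dividing by $|\Gamma(-\theta)|$ reproduces the asserted constant.

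To prove the bound on $\Psi(\lambda)$ I would write the difference as an integral operator. Let $p^\Omega_\lambda(x,y)\ge0$ be the Dirichlet heat kernel on $\Omega$; without any regularity assumption on $\partial\Omega$, domain monotonicity gives $p^\Omega_\lambda(x,y)\le p_\lambda(x-y):=(4\pi\lambda)^{-3/2}e^{-|x-y|^2/(4\lambda)}$ together with $\int_\Omega p^\Omega_\lambda(x,y)\,dy\le1$, and
\begin{equation}
\(S(\lambda)(\phi u)-\phi S(\lambda)u\)(x)=e^{-\lambda}\int_\Omega p^\Omega_\lambda(x,y)\Bigl(1-\tfrac{\phi(x)}{\phi(y)}\Bigr)\phi(y)u(y)\,dy.
\end{equation}
Because $x\mapsto\sqrt{1+|x-x_0|^2}$ is $1$-Lipschitz, the exponent of $\phi$ has Lipschitz constant $\eb$, hence $|1-\phi(x)/\phi(y)|\le e^{\eb|x-y|}-1\le\eb\,|x-y|\,e^{\eb|x-y|}$. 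Consequently, as an operator acting on $h=\phi u$, the map $h\mapsto S(\lambda)(\phi u)-\phi S(\lambda)u$ has kernel dominated in absolute value by the convolution kernel $\eb\,e^{-\lambda}p_\lambda(x-y)\,|x-y|e^{\eb|x-y|}$, and Young's inequality yields
\begin{equation}
\Psi(\lambda)\le\eb\,e^{-\lambda}\(\int_{\R^3}p_\lambda(z)\,|z|\,e^{\eb|z|}\,dz\)\|\phi u\|_{L^2(\Omega)}.
\end{equation}
A direct Gaussian computation — bounding $e^{\eb|z|}\le\prod_i(e^{\eb z_i}+e^{-\eb z_i})$ and using, in one dimension, $\int_\R(4\pi\lambda)^{-1/2}e^{-t^2/(4\lambda)}e^{\eb|t|}\,dt\le2e^{\eb^2\lambda}$ and $\int_\R(4\pi\lambda)^{-1/2}e^{-t^2/(4\lambda)}|t|e^{\eb|t|}\,dt\le Ce^{\eb^2\lambda}(\eb\lambda+\sqrt\lambda)$ — gives $\int_{\R^3}p_\lambda(z)|z|e^{\eb|z|}\,dz\le Ce^{3\eb^2\lambda}(\eb\lambda+\sqrt\lambda)$. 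Taking $\eb_0$ so small that $3\eb_0^2\le\tfrac14$, the prefactor $e^{-\lambda}e^{3\eb^2\lambda}$ is bounded by $e^{-3\lambda/4}$; then, since $\eb\le\eb_0\le1$ and $\sqrt\lambda\,e^{-\lambda/4}$ is bounded, one checks $\eb\,e^{-3\lambda/4}(\eb\lambda+\sqrt\lambda)\le C\eb^{1/2}\lambda^{1/2}e^{-\lambda/2}$, which is the required bound on $\Psi(\lambda)$.

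Two points deserve care. The first is the functional-analytic set-up: since $u$ is only assumed to lie in $L^2_{\{\eb,x_0\}}(\Omega)$, neither $\phi u$ nor $u$ need belong to the domain of $(-\Dx+1)^\theta$, so I would first establish the identity above and the final estimate for $u\in C_0^\infty(\Omega)$ — for which $\phi u\in C_0^\infty(\Omega)$ lies in the relevant domain and $\phi\,(-\Dx+1)^\theta u\in L^2(\Omega)$, because $(-\Dx+1)^\theta u$ is smooth on $\mathrm{supp}\,u$ and, by \eqref{fp formula} and the Gaussian kernel bound, decays exponentially away from it, faster than $\phi^{-1}$ grows since $\eb_0<1$ — and then extend by continuity, the commutator being understood as this bounded extension; this is legitimate precisely because the estimate involves only $\|\phi u\|_{L^2}$. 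The second, and the only genuine computation, is the Gaussian integral bound together with fixing how small $\eb_0$ must be chosen so that $e^{-\lambda}e^{3\eb^2\lambda}\le e^{-3\lambda/4}$; the semigroup cancellation, Young's inequality, and the concluding $\Gamma$-identity are all routine.
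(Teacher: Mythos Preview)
Your argument is correct and reaches exactly the same intermediate target as the paper --- the bound
\[
\|S(\lambda)(\phi u)-\phi\,S(\lambda)u\|_{L^2(\Omega)}\le C\eb^{1/2}\lambda^{1/2}e^{-\lambda/2}\|\phi u\|_{L^2(\Omega)},
\]
followed by the identical $\Gamma$-integral --- but the route to this bound is genuinely different. The paper sets $v(\lambda)=S(\lambda)(\phi u)$, $w(\lambda)=S(\lambda)u$, derives the parabolic equation for $v-\phi w$ with source $-2\nabla\phi\cdot\nabla w - w\,\Delta\phi$, multiplies by $v-\phi w$, and closes an $L^2$ energy inequality using only $|\nabla\phi|\le\eb\phi$, $|\Delta\phi|\le C\eb\phi$ together with the weighted decay $\|\phi w(\lambda)\|_{L^2}^2\le e^{-\lambda}\|\phi u\|_{L^2}^2$. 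Gronwall then produces the $\eb^{1/2}\lambda^{1/2}e^{-\lambda/2}$ factor.

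Your heat-kernel route is in some ways sharper: the Gaussian computation actually delivers a factor $\eb$ rather than $\eb^{1/2}$ before you deliberately degrade it to match the statement. It is also pleasantly direct, needing only domain monotonicity $p^\Omega_\lambda\le p_\lambda$ and Young's inequality. On the other hand, the paper's energy method is the one that adapts cleanly to the companion estimates that follow (Propositions~\ref{prop comest2} and~\ref{prop comest3}): it uses only $L^2$-level parabolic multipliers and the size of $\nabla\phi$, $\Delta\phi$, so it carries over verbatim when $\phi$ is replaced by the compactly supported cut-off $\psi_{x_0}$ (where your ratio trick $1-\phi(x)/\phi(y)$ would break down near the edge of the support), and it bootstraps to higher $\theta$ by interpolating with an $H^s$ norm of $w$. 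So both proofs are valid here; yours is slicker for this single statement, while the paper's is the template for the rest of Section~\ref{s.cest}.
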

\begin{proof}
Writing down the difference under consideration using \eqref{fp formula} it is clear that we should get a control of the quantity
\begin{equation}
\|e^{-(-\Dx +1)\lambda}(\phi_{\eb,x_0}u)-\phi_{\eb,x_0}e^{-(-\Dx+1)\lambda}u\|_{L^2(\Omega)}.
\end{equation}
Let us denote $v(\lambda):=e^{-(-\Dx +1)\lambda}(\phi_{\eb,x_0}u)$ and $w(\lambda):=e^{-(-\Dx +1)\lambda}u$, that is $v$ and $w$ satisfy the equations
\begin{equation}
\label{3.1}
\begin{cases}
\d_\lambda v+(-\Dx+1)v=0,\ x\in\Omega\\
v|_{\d\Omega}=0,\ v(0)=\phi_{\eb,x_0}u,
\end{cases},\quad
\begin{cases}
\d_\lambda w+(-\Dx+1)w=0,\ x\in\Omega,\\
w|_{\d\Omega}=0,\ w(0)=u.
\end{cases}
\end{equation}
Therefore the quantity we need to control is
\begin{equation}
\|v(\lambda)-\phi_{\eb,x_0}w(\lambda)\|_{L^2(\Omega)}.
\end{equation} 
From \eqref{3.1} we see that $v-\phi_{\eb,x_0}w$ solves the problem
\begin{equation}
\begin{cases}
\label{3.2}
\d_{\lambda}(v-\phi_{\eb,x_0}w)+(-\Dx+1)(v-\phi_{\eb,x_0}w)=-2\nabla\phi_{\eb,x_0}\nabla w-w\Dx\phi_{\eb,x_0},\\
(v-\phi_{\eb,x_0}w)|_{\d\Omega}=0,\ (v-\phi_{\eb,x_0}w)|_{\lambda=0}=0.
\end{cases}
\end{equation}
Multiplying equation from \eqref{3.2} by $v-\phi_{\eb,x_0}w$ we find 
\begin{multline}
\label{3.3}
\frac{1}{2}\frac{d}{d\lambda}\|v-\phi_{\eb,x_0}w\|^2_{L^2(\Omega)}+\|\nabla(v-\phi_{\eb,x_0}w)\|^2_{L^2(\Omega)}+\|v-\phi_{\eb,x_0}w\|^2_{L^2(\Omega)}= \\
-2(\nabla\phi_{\eb,x_0}\nabla w,v-\phi_{\eb,x_0}w)-(w\Dx\phi_{\eb,x_0},v-\phi_{\eb,x_0}w)=\\
2(w,\nabla\phi_{\eb,x_0}\nabla(v-\phi_{\eb,x_0}w))+(w\Dx\phi_{\eb,x_0},v-\phi_{\eb,x_0}w):=A_1+A_2.
\end{multline}
By simple calculations one finds $|\nabla \phi_{\eb,x_0}|\leq \eb\phi_{\eb,x_0}$ that immediately gives the estimate for $A_1$
\begin{equation}
\label{3.4}
|A_1|\leq 2\eb\|\phi_{\eb,x_0}w\|_{L^2(\Omega)}\|\nabla(v-\phi_{\eb,x_0}w)\|_{L^2(\Omega)}\leq\eb\|\phi_{\eb,x_0}w\|^2_{L^2(\Omega)}+\eb\|\nabla(v-\phi_{\eb,x_0}w)\|^2_{L^2(\Omega)}.
\end{equation}
Analogously we see that $|\Dx\phi_{\eb,x_0}|\leq C\eb\phi_{\eb,x_0}$ with some absolute constant $C$ that implies $A_2$ estimate
\begin{equation}
\label{3.5}
|A_2|\leq C\eb \|\phi_{\eb,x_0}w\|_{L^2(\Omega)}\|v-\phi_{\eb,x_0}\|_{L^2(\Omega)}\leq C\eb\|\phi_{\eb,x_0}w\|^2_{L^2(\Omega)}+C\eb\|v-\phi_{\eb,x_0}\|^2_{L^2(\Omega)}.
\end{equation}
Since $w$ solves \eqref{3.1} it is not difficult to see, multiplying the corresponding equation by $\phi^2_{\eb,x_0}w$, that $w$ admits the estimate 
\begin{equation}
\label{3.6}
\|\phi_{\eb,x_0}w(\lambda)\|^2_{L^2(\Omega)}\leq e^{-\lambda}\|\phi_{\eb,x_0}u\|^2_{L^2(\Omega)},\quad \lambda\geq 0,
\end{equation}
as long as $\eb\leq \eb_0$.
Combining \eqref{3.3}-\eqref{3.6} we arrive at
\begin{equation}
\frac{d}{d\lambda}\|v-\phi_{\eb,x_0}w\|^2_{L^2(\Omega)}+\|v-\phi_{\eb,x_0}w\|^2_{L^2(\Omega)}\leq
 \eb Ce^{-\lambda}\|\phi_{\eb,x_0}u\|^2_{L^2(\Omega)}
\end{equation}
for some absolute constant $C\geq 0$, as long as $\eb\leq\eb_0$. Hence multiplying the above equation by $e^{\lambda}$ one finds
\begin{equation}
\label{3.7}
\|v-\phi_{\eb,x_0}w\|_{L^2(\Omega)}\leq \sqrt{C}\eb^\frac{1}{2}\lambda^\frac{1}{2}e^{-\frac{\lambda}{2}}\|\phi_{\eb,x_0}u\|_{L^2(\Omega)}.
\end{equation}
Finally, using \eqref{fp formula} and \eqref{3.7}, we are ready to get commutator estimate
\begin{multline}
\label{3.7.1}
\|(-\Dx+1)^\theta(\phi_{\eb,x_0} u)-\phi_{\eb,x_0}(-\Dx+1)^\theta u\|_{L^2(\Omega)}\leq\\
\frac{1}{|\Gamma(-\theta)|}\int_0^\infty\lambda^{-1-\theta}\|v(\lambda)-\phi_{\eb,x_0}w(\lambda)\|_{L^2(\Omega)}d\lambda\leq\\
\sqrt{C}\eb^\frac{1}{2}\frac{1}{|\Gamma(-\theta)|}\int_0^\infty\lambda^{-\frac{1}{2}-\theta}e^{-\frac{\lambda}{2}}d\lambda\,\|\phi_{\eb,x_0}u\|_{L^2(\Omega)}=\left|s=\frac{\lambda}{2}\right|=\\
\sqrt{C}\eb^\frac{1}{2}2^{\frac{1}{2}-\theta}\frac{1}{|\Gamma(-\theta)|}\int_0^\infty s^{\left(\frac{1}{2}-\theta\right)-1}e^{-s}ds\,\|\phi_{\eb,x_0}u\|_{L^2(\Omega)}=\sqrt{C}\eb^\frac{1}{2}2^{\frac{1}{2}-\theta}\frac{\Gamma\left(\frac{1}{2}-\theta\right)}{|\Gamma(-\theta)|}\|\phi_{\eb,x_0}u\|_{L^2(\Omega)},
\end{multline}
that completes the proof.
\end{proof}
 
\begin{cor}
\label{cor equiv n1}
Let $\eb\in(0,\eb_0]$ be small enough, $\theta\in (0,\frac{1}{2})$ and $u\in \Cal H^{2\theta}_{\{\eb,x_0\}}(\Omega)$ then the following norms are equivalent
\begin{multline}
C_1(\theta)\|(-\Dx+1)^\theta (\phi_{\eb,x_0}u)\|_{L^2(\Omega)}\leq\\
\|\phi_{\eb,x_0}(-\Dx+1)^\theta u\|_{L^2(\Omega)}\leq\\
C_2(\theta)\|(-\Dx+1)^\theta (\phi_{\eb,x_0}u)\|_{L^2(\Omega)},
\end{multline} 
where constants $C_i(\theta)$ are independent of $\eb$.
\end{cor}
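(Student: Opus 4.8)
The plan is to read off the equivalence directly from the commutator bound of Proposition \ref{prop comest1} by the triangle inequality, the only extra ingredient being the elementary lower bound coming from positivity of the operator. Throughout I write $\phi=\phi_{\eb,x_0}$, denote by $\mathcal H^{2\theta}(\Omega):=\mathcal D((-\Dx+1)^\theta)$ the unweighted domain in $L^2(\Omega)$, and set $\kappa(\theta):=2^{\frac12-\theta}\Gamma(\frac12-\theta)/|\Gamma(-\theta)|$, which is finite for each fixed $\theta\in(0,\frac12)$. Since $-\Dx$ with Dirichlet boundary conditions is a non-negative self-adjoint operator in $L^2(\Omega)$, we have $-\Dx+1\ge 1$, so by the spectral theorem and monotonicity of $t\mapsto t^{2\theta}$ one gets $(-\Dx+1)^{2\theta}\ge 1$ and hence
\[
\|w\|_{L^2(\Omega)}\le \|(-\Dx+1)^\theta w\|_{L^2(\Omega)}\qquad\text{for every }w\in\mathcal H^{2\theta}(\Omega).
\]

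Now fix $u\in\mathcal H^{2\theta}_{\{\eb,x_0\}}(\Omega)\subset L^2_{\{\eb,x_0\}}(\Omega)$. Then $(-\Dx+1)^\theta u\in L^2_{\{\eb,x_0\}}(\Omega)$, so $\phi(-\Dx+1)^\theta u\in L^2(\Omega)$, and Proposition \ref{prop comest1} applies, yielding
\[
\|(-\Dx+1)^\theta(\phi u)-\phi(-\Dx+1)^\theta u\|_{L^2(\Omega)}\le C\eb^{\frac12}\kappa(\theta)\,\|\phi u\|_{L^2(\Omega)}.
\]
In particular $(-\Dx+1)^\theta(\phi u)\in L^2(\Omega)$, i.e. $\phi u\in\mathcal H^{2\theta}(\Omega)$, so all three quantities in the claimed equivalence are finite and the lower bound above may be applied with $w=\phi u$, giving $\|\phi u\|_{L^2}\le\|(-\Dx+1)^\theta(\phi u)\|_{L^2}$. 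Combining this with the triangle inequality,
\[
\|(-\Dx+1)^\theta(\phi u)\|_{L^2}\le \|\phi(-\Dx+1)^\theta u\|_{L^2}+C\eb^{\frac12}\kappa(\theta)\,\|(-\Dx+1)^\theta(\phi u)\|_{L^2};
\]
choosing $\eb_0=\eb_0(\theta)$ so small that $C\eb_0^{\frac12}\kappa(\theta)\le\frac12$ lets me absorb the last term into the left-hand side and obtain $\|(-\Dx+1)^\theta(\phi u)\|_{L^2}\le 2\|\phi(-\Dx+1)^\theta u\|_{L^2}$, which is the left inequality with $C_1(\theta)=\frac12$.

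The right inequality follows symmetrically: the same two estimates give
\[
\|\phi(-\Dx+1)^\theta u\|_{L^2}\le\|(-\Dx+1)^\theta(\phi u)\|_{L^2}+C\eb^{\frac12}\kappa(\theta)\,\|(-\Dx+1)^\theta(\phi u)\|_{L^2}\le\tfrac32\|(-\Dx+1)^\theta(\phi u)\|_{L^2},
\]
so one may take $C_2(\theta)=\frac32$, and both constants are manifestly independent of $\eb$. There is no genuine obstacle here; the only points that require a little care are verifying that $\phi u$ really lies in the unweighted domain $\mathcal H^{2\theta}(\Omega)$ (so that the absorption step makes sense), and noting that the threshold $\eb_0$ must be allowed to depend on $\theta$, because $\kappa(\theta)$, although finite for each fixed $\theta\in(0,\frac12)$, blows up as $\theta\to\frac12^-$.
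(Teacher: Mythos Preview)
Your proof is correct and is exactly the intended derivation: the paper states this result as an immediate corollary of Proposition~\ref{prop comest1} without giving an explicit argument, and the natural way to obtain it is precisely the triangle-inequality-plus-absorption argument you wrote, using $\|\phi u\|_{L^2}\le\|(-\Dx+1)^\theta(\phi u)\|_{L^2}$ from the spectral lower bound. Your remark that the threshold $\eb_0$ must depend on $\theta$ (since $\kappa(\theta)\to\infty$ as $\theta\to\tfrac12^-$) is a useful clarification the paper leaves implicit.
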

Below we prefer to use the norm given by Corollary \ref{cor equiv n1}.
 
As it is seen from Proposition \ref{prop comest1} the case $\theta=\frac{1}{2}$ is in a sense critical and for this reason is delicate. However for our purposes we will need only $\theta=\frac{1}{4}$. On the other hand to establish Corollary \ref{cor equiv n1} for the case $\theta\in[\frac{1}{2},1)$ we may allow a weaker version of Proposition \ref{prop comest2} 


\begin{prop}
\label{prop comest2}
Let $s\in(0,1)$, $\theta\in (0,\frac{1+s}{2})$ and $u\in \Cal H^s_{\{\eb,x_0\}}(\Omega)$. Then for any small enough $\eb\in(0,\eb_0]$ the following commutator estimate holds
\begin{multline}
\|(-\Dx+1)^\theta(\phi_{\eb,x_0} u)-\phi_{\eb,x_0}(-\Dx+1)^\theta u\|_{L^2(\Omega)}\leq\\
 C_s\eb^\frac{1+s}{2}2^{\frac{1+s}{2}-\theta}\frac{\Gamma\left(\frac{1+s}{2}-\theta\right)}{|\Gamma(-\theta)|}\|\phi_{\eb,x_0}(-\Dx+1)^\frac{s}{2}u\|_{L^2(\Omega)},
\end{multline}
with some constant $C_s> 0$, that depends only on $s$.
\end{prop}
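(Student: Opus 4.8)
The plan is to follow the strategy of the proof of Proposition~\ref{prop comest1}: one reduces, via \eqref{fp formula}, to a bound for the heat-semigroup difference $v(\lambda)-\phi_{\eb,x_0}w(\lambda)$ and then inserts it into the $\Gamma$-integral as in \eqref{3.7.1}. The one new point is that the extra regularity $u\in\Cal H^s_{\{\eb,x_0\}}(\Omega)$ must be used to sharpen that bound from order $\lambda^{1/2}$ (as in Proposition~\ref{prop comest1}) to order $\lambda^{(1+s)/2}$, which is exactly what makes the integral converge for every $\theta<\tfrac{1+s}{2}$. So, write $S_\lambda:=e^{-(-\Dx+1)\lambda}$, $v(\lambda):=S_\lambda(\phi_{\eb,x_0}u)$, $w(\lambda):=S_\lambda u$ and $z(\lambda):=v(\lambda)-\phi_{\eb,x_0}w(\lambda)$, so that $z$ solves \eqref{3.2}, i.e. $\d_\lambda z+(-\Dx+1)z=F(\lambda)$ with $z(0)=0$ and $F(\lambda)=-2\nabla\phi_{\eb,x_0}\cdot\nabla w(\lambda)-w(\lambda)\Dx\phi_{\eb,x_0}$; by \eqref{fp formula} and the first inequality of \eqref{3.7.1} it suffices to control $\int_0^\infty\lambda^{-1-\theta}\|z(\lambda)\|_{L^2(\Omega)}\,d\lambda$. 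The claim I would prove is
\begin{equation}\label{comest2key}
\|z(\lambda)\|_{L^2(\Omega)}\le C_s\,\eb^{\frac{1+s}{2}}\,\lambda^{\frac{1+s}{2}}\,e^{-\lambda/2}\,\|\phi_{\eb,x_0}(-\Dx+1)^{\frac s2}u\|_{L^2(\Omega)},\qquad\lambda\ge0 .
\end{equation}
Granting \eqref{comest2key}, the substitution $\sigma=\lambda/2$ turns $\int_0^\infty\lambda^{-1-\theta}\|z(\lambda)\|_{L^2}\,d\lambda$ into $C_s\eb^{\frac{1+s}{2}}2^{\frac{1+s}{2}-\theta}\Gamma\!\left(\tfrac{1+s}{2}-\theta\right)\|\phi_{\eb,x_0}(-\Dx+1)^{\frac s2}u\|_{L^2(\Omega)}$, which is finite precisely because $\theta<\tfrac{1+s}{2}$; dividing by $|\Gamma(-\theta)|$ gives the proposition.

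To prove \eqref{comest2key} I would use Duhamel's formula $z(\lambda)=\int_0^\lambda S_{\lambda-\tau}F(\tau)\,d\tau$ after rewriting the source in divergence-plus-potential form: since $\mathrm{div}\!\left(2w\,\nabla\phi_{\eb,x_0}\right)=2\nabla\phi_{\eb,x_0}\cdot\nabla w+2w\,\Dx\phi_{\eb,x_0}$, one has $F(\tau)=-\,\mathrm{div}\!\left(2w(\tau)\nabla\phi_{\eb,x_0}\right)+w(\tau)\Dx\phi_{\eb,x_0}$. The three ingredients are: \emph{(i)} the elementary smoothing bound $\|S_t\psi\|_{L^2(\Omega)}\le C_a\,t^{-a}e^{-t/2}\|\psi\|_{H^{-2a}(\Omega)}$ for $0<a<1$, which follows from spectral calculus and $\sup_{\mu\ge1}\mu^ae^{-\mu t}\le C_a t^{-a}e^{-t/2}$; with $a=\tfrac{1-s}{2}$, combined with the boundedness of $\mathrm{div}:H^s(\Omega)\to H^{s-1}(\Omega)$ and the embedding $L^2(\Omega)\hookrightarrow H^{s-1}(\Omega)$, this gives $\|S_tF(\tau)\|_{L^2}\le C_s\,t^{-\frac{1-s}{2}}e^{-t/2}\big(\|w(\tau)\nabla\phi_{\eb,x_0}\|_{H^s(\Omega)}+\|w(\tau)\Dx\phi_{\eb,x_0}\|_{L^2(\Omega)}\big)$; \emph{(ii)} the a priori bound \eqref{3.6}, whose proof carries over unchanged, $\|\phi_{\eb,x_0}S_\tau h\|_{L^2(\Omega)}\le e^{-\tau/2}\|\phi_{\eb,x_0}h\|_{L^2(\Omega)}$ for $\eb\le\eb_0$; and \emph{(iii)} the bounds $|\nabla\phi_{\eb,x_0}|\le\eb\phi_{\eb,x_0}$, $|\Dx\phi_{\eb,x_0}|\le C\eb\phi_{\eb,x_0}$ together with the fact that $\nabla\phi_{\eb,x_0}/\phi_{\eb,x_0}=-\eb(x-x_0)/\sqrt{1+|x-x_0|^2}$ equals $\eb$ times a function bounded in $C^N(\R^n)$ uniformly in $x_0$, so that multiplication by it increases the $H^s(\Omega)$ norm by at most a factor $C_s\eb$. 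Combining \emph{(iii)} with Corollary~\ref{cor equiv n1} (applicable since $\tfrac s2<\tfrac12$) and the identity $(-\Dx+1)^{\frac s2}w(\tau)=S_\tau(-\Dx+1)^{\frac s2}u$, and then \emph{(ii)} with $h=(-\Dx+1)^{\frac s2}u$, I obtain
\[
\|w(\tau)\nabla\phi_{\eb,x_0}\|_{H^s(\Omega)}+\|w(\tau)\Dx\phi_{\eb,x_0}\|_{L^2(\Omega)}\le C_s\,\eb\,e^{-\tau/2}\,\|\phi_{\eb,x_0}(-\Dx+1)^{\frac s2}u\|_{L^2(\Omega)},
\]
where for the potential term one also uses $(-\Dx+1)^{\frac s2}\ge I$. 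Inserting this into \emph{(i)} and integrating over $\tau\in[0,\lambda]$, with $e^{-(\lambda-\tau)/2}e^{-\tau/2}=e^{-\lambda/2}$ and $\int_0^\lambda(\lambda-\tau)^{-\frac{1-s}{2}}\,d\tau=\tfrac{2}{1+s}\lambda^{\frac{1+s}{2}}$, yields \eqref{comest2key}; one in fact obtains a factor $\eb$ rather than $\eb^{\frac{1+s}{2}}$, which is stronger for $\eb\le\eb_0\le1$ since $\tfrac{1+s}{2}\le1$.

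I expect the main obstacle to be ingredient \emph{(iii)} in the range $s\in[\tfrac12,1)$: one must verify that $\Cal H^s_{\{\eb,x_0\}}(\Omega)$—which by \eqref{D(Dx)>1/4} carries the Dirichlet constraint there—is stable under multiplication by the smooth bounded function $\nabla\phi_{\eb,x_0}/\phi_{\eb,x_0}$, with operator norm $\le C_s\eb$ uniformly in $x_0$, the vanishing of $w(\tau)$ on $\partial\Omega$ being what keeps the product in the same space. This is a routine but not entirely trivial consequence of the localisation description \eqref{2.8}--\eqref{2.10} of weighted fractional spaces and of assumptions \eqref{2.6}--\eqref{2.7} on $\Omega$, and it is the step where the geometry of $\Omega$ genuinely enters. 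Finally, let me stress that—in contrast to Proposition~\ref{prop comest1}—it is essential to keep the divergence structure of $F$ and to spend the $\tfrac{1-s}{2}$-order smoothing gain of $S_{\lambda-\tau}$ on it; estimating $\nabla w(\tau)$ directly would only give $\|z(\lambda)\|\lesssim\eb\,\lambda^{s/2}e^{-\lambda/2}\|\phi_{\eb,x_0}(-\Dx+1)^{\frac s2}u\|_{L^2(\Omega)}$, which suffices only for $\theta<\tfrac s2$, not up to $\tfrac{1+s}{2}$.
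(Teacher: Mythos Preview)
Your argument is correct in outline and arrives at the same key pointwise bound \eqref{comest2key}, but the route is genuinely different from the paper's. The paper does \emph{not} use Duhamel's formula or the $t^{-a}$-smoothing of the heat semigroup. Instead it repeats the energy argument of Proposition~\ref{prop comest1}: it multiplies the equation for $z=v-\phi_{\eb,x_0}w$ by $z$ itself, but now estimates the source term $A_1=2(w,\nabla\phi_{\eb,x_0}\nabla z)$ by shifting $s/2$ derivatives via the pairing $(-\Dx+1)^{s/2}\leftrightarrow(-\Dx+1)^{-s/2}$, invoking Proposition~\ref{prop comest1} itself (applied with $\theta=s/2<\tfrac12$ and with the modified weight $a_k\phi_{\eb,x_0}$) to move $\phi_{\eb,x_0}$ across $(-\Dx+1)^{s/2}$, interpolating $\|z\|_{H^{1-s}}\le\|z\|_{L^2}^s\|z\|_{H^1}^{1-s}$, and then applying Young's inequality with the exponents $\tfrac{2}{1-s},\tfrac{2}{1+s}$. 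This produces a differential inequality for $\|z\|_{L^2}^{2/(1+s)}$ rather than for $\|z\|_{L^2}^2$, whose solution gives exactly \eqref{comest2key} with the power $\eb^{(1+s)/2}$.

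What each approach buys: the paper's method is entirely self-contained within the $L^2$-energy framework already set up for Proposition~\ref{prop comest1} and never needs to identify negative-order domain spaces $\Cal H^{-(1-s)}$ or to verify that $\mathrm{div}$ maps into the correct dual space---it simply recycles the earlier commutator estimate. Your Duhamel approach is conceptually cleaner (it explains transparently \emph{why} the extra $s$ derivatives of $u$ upgrade $\lambda^{1/2}$ to $\lambda^{(1+s)/2}$: one spends $\tfrac{1-s}{2}$ orders of smoothing on the divergence and the remaining $\tfrac{s}{2}$ orders are supplied by $u$) and even yields a sharper $\eb$-power. The price is exactly the technical point you flag: one must check that the smoothing bound $\|S_t\psi\|_{L^2}\le C_a t^{-a}e^{-t/2}\|\psi\|_{\Cal H^{-2a}}$ really applies to $\psi=\mathrm{div}(w\nabla\phi_{\eb,x_0})$, i.e.\ that this divergence lies in the \emph{Dirichlet} dual space $\Cal H^{-(1-s)}=(\Cal H^{1-s})^*$ with the claimed norm bound, which for $1-s>\tfrac12$ involves the boundary behaviour through \eqref{D(Dx)>1/4}. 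This is indeed true (the vanishing of $w$ on $\partial\Omega$ makes the integration by parts in the duality pairing legitimate), but the paper's energy method sidesteps the issue entirely.
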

\begin{proof}
The first part is the same as in Proposition \ref{3.3}. So we consider the difference $v-\phi_{\eb,x_0}w$, with $v$ and $w$ given by \eqref{3.1}, which satisfies the identity \eqref{3.3}. Now to estimate $A_1$ we proceed slightly different
\begin{multline}
\label{3.8}
|A_1|=2|\sum_{k=1}^3(w\d_k\phi_{\eb,x_0},\d_k(v-\phi_{\eb,x_0}w))|=2\eb|\sum_{k=1}^3(w\phi_{\eb,x_0}a_k(x),\d_k(v-\phi_{\eb,x_0}w))|\leq\\
2\eb\sum_{k=1}^3\|(-\Dx+1)^\frac{s}{2}\left(\phi_{\eb,x_0}a_k(x)w\right)\|_{L^2(\Omega)}\|(-\Dx+1)^{-\frac{s}{2}}(\d_k(v-\phi_{\eb,x_0}w))\|_{L^2(\Omega)},
\end{multline}
where $a_k(x)=\frac{x_k-(x_0)_k}{\sqrt{1+|x-x_0|^2}}$. Now, since $\frac{s}{2}\in\left(0,\frac{1}{2}\right)$, we can complete the above estimate by using Proposition \ref{prop comest1} with $\psi^k_{\e,x_0}(x)=a_k(x)\phi_{\eb,x_0}(x)\in C^\infty(\Omega)$ instead of $\phi_{\eb,x_0}$. Indeed, clearly that
$|\nabla \psi^k_{\eb,x_0}|\leq C\phi_{\eb,x_0}$ and $|\Dx \psi^k_{\eb,x_0}|\leq C \phi_{\eb,x_0}$ for some absolute constant $C$ when $\eb\in(0,\eb_0]$,
thus we derive
\begin{multline}
\label{3.85}
|A_1|\leq\\
2C_s\eb\sum_{k=1}^3\left(\|\phi_{\eb,x_0}(-\Dx+1)^\frac{s}{2}w\|_{L^2(\Omega)}+\|\phi_{\eb,x_0}w\|_{L^2(\Omega)}\right)\|v-\phi_{\eb,x_0}w\|^s_{L^2(\Omega)}\|v-\phi_{\eb,x_0}w\|^{1-s}_{H^1(\Omega)}\leq\\
2C_s\eb\|\phi_{\eb,x_0}(-\Dx+1)^\frac{s}{2}w\|_{L^2(\Omega)}\|v-\phi_{\eb,x_0}w\|^s_{L^2(\Omega)}\|v-\phi_{\eb,x_0}w\|^{1-s}_{H^1(\Omega)},
\end{multline}
with some constant $C_s$ depending only on $s$. Then applying Young's inequality on the right hand side of \eqref{3.85} with exponents $\frac{2}{1-s}$ and $\frac{2}{1+s}$ we deduce
\begin{equation}
\label{3.9}
|A_1|\leq 2C_s\eb\(\|\phi_{\eb,x_0}(-\Dx+1)^\frac{s}{2}w\|^\frac{2}{1+s}_{L^2(\Omega)}\|v-\phi_{\eb,x_0}w\|^\frac{2s}{1+s}_{L^2(\Omega)}+\|v-\phi_{\eb,x_0}w\|^2_{H^1(\Omega)}\).
\end{equation}
$A_2$ term is easier and can be estimated as follows
\begin{multline}
\label{3.10}
|A_2|\leq C\eb\|\phi_{\eb,x_0}w\|_{L^2(\Omega)}\|v-\phi_{\eb,x_0}\|_{L^2(\Omega)}\leq\\
 C_s\eb\|\phi_{\eb,x_0}(-\Dx+1)^\frac{s}{2}w\|_{L^2(\Omega)}\|v-\phi_{\eb,x_0}w\|^s_{L^2(\Omega)}\|v-\phi_{\eb,x_0}w\|^{1-s}_{H^1(\Omega)}\leq\\
C_s\eb\(\|\phi_{\eb,x_0}(-\Dx+1)^\frac{s}{2}w\|^\frac{2}{1+s}_{L^2(\Omega)}\|v-\phi_{\eb,x_0}w\|^\frac{2s}{1+s}_{L^2(\Omega)}+\|v-\phi_{\eb,x_0}w\|^2_{H^1(\Omega)}\),
\end{multline}
with some constant $C_s$ depending only on $s$.
Thus combining \eqref{3.3}, \eqref{3.9}, \eqref{3.10}, taking $\eb\leq\eb_0$ small enough and taking into account that 
\begin{multline}
\frac{d}{d\lambda}\|v-\phi_{\eb,x_0}w\|^2_{L^2(\Omega)}=\frac{d}{d\lambda}\left(\|v-\phi_{\eb,x_0}w\|_{L^2(\Omega)}^\frac{2}{1+s}\right)^{1+s}=\\
(1+s)\|v-\phi_{\eb,x_0}w\|_{L^2(\Omega)}^\frac{2s}{1+s}\frac{d}{d\lambda}\|v-\phi_{\eb,x_0}w\|_{L^2(\Omega)}^\frac{2}{1+s}
\end{multline}
we derive the estimate
\begin{equation}
\label{3.11}
\frac{d}{d\lambda}\|v-\phi_{\eb,x_0}w\|^\frac{2}{1+s}_{L^2(\Omega)}+\frac{1}{1+s}\|v-\phi_{\eb,x_0}w\|^\frac{2}{1+s}_{L^2(\Omega)}\leq C_s\eb\|\phi_{\eb,x_0}(-\Dx+1)^\frac{s}{2}w\|^\frac{2}{1+s}_{L^2(\Omega)},
\end{equation}
as long as $\eb\in(0,\eb_0]$, with some constant $C_s$ depending only on $s$. Applying operator $(-\Dx+1)^\frac{s}{2}$ to the equation for $w$ \eqref{3.1} and repeating the arguments of \eqref{3.6} for $\tilde w=(-\Dx+1)^\frac{s}{2}w$ we obtain 
\begin{equation}
\label{3.13}
\|\phi_{\eb,x_0}(-\Dx+1)^\frac{s}{2}w(\lambda)\|^2_{L^2(\Omega)}\leq e^{-\lambda}\|\phi_{\eb,x_0}(-\Dx+1)^\frac{s}{2}u\|^2_{L^2(\Omega)}, \quad s\in(0,1),\quad \lambda\geq 0.
\end{equation} 
Combining \eqref{3.11}, \eqref{3.13} we find
\begin{multline}
\frac{d}{d\lambda}\|v-\phi_{\eb,x_0}w\|^\frac{2}{1+s}_{L^2(\Omega)}+\frac{1}{1+s}\|v-\phi_{\eb,x_0}w\|^\frac{2}{1+s}_{L^2(\Omega)}\leq\\
 C_s\eb e^{-\frac{\lambda}{1+s}}\|\phi_{\eb,x_0}(-\Dx+1)^\frac{s}{2}u\|^\frac{2}{1+s}_{L^2(\Omega)},\quad \lambda\geq 0,\ s\in(0,1)
\end{multline}
with some constant $C_s$ depending on $s$. Multiplying the above inequality by $e^\frac{\lambda}{1+s}$ one finds
\begin{equation}
\frac{d}{d\lambda}\left(e^\frac{\lambda}{1+s}\|v-\phi_{\eb,x_0}w\|^\frac{2}{1+s}_{L^2(\Omega)}\right)\leq C_s\eb\|\phi_{\eb,x_0}(-\Dx+1)^\frac{s}{2}u\|^\frac{2}{1+s}_{L^2(\Omega)},\quad\lambda\geq 0,
\end{equation}
that is 
\begin{equation}
\|v(\lambda)-\phi_{\eb,x_0}w(\lambda)\|_{L^2(\Omega)}^\frac{2}{1+s}\leq C_s\eb \lambda e^{-\frac{\lambda}{1+s}}\|\phi_{\eb,x_0}(-\Dx+1)^\frac{s}{2}u\|^\frac{2}{1+s}_{L^2(\Omega)},\quad \lambda\geq 0,
\end{equation}
and hence
\begin{equation}
\|v(\lambda)-\phi_{\eb,x_0}w(\lambda)\|_{L^2(\Omega)}\leq C_s\eb^\frac{1+s}{2}\lambda^\frac{1+s}{2}e^{-\frac{\lambda}{2}}\|\phi_{\eb,x_0}(-\Dx+1)^\frac{s}{2}u\|_{L^2(\Omega)},\quad \lambda\geq 0,
\end{equation}
as long as $\eb\in(0,\eb_0]$, where constant $C_s> 0$ depends only on $s$. Now let us estimate the commutator
\begin{multline}
\|\phi_{\eb,x_0}(-\Dx+1)^\theta u-(-\Dx+1)^\theta(\phi_{\eb,x_0}u)\|_{L^2(\Omega)}\leq\\
C_s\eb^\frac{1+s}{2}\frac{1}{|\Gamma(-\theta)|}\int_0^\infty\lambda^{\frac{1+s}{2}-\theta-1}e^{-\frac{\lambda}{2}}d\lambda\|\phi_{\eb,x_0}(-\Dx+1)^\frac{s}{2}u\|_{L^2(\Omega)}=\left|r=\frac{\lambda}{2}\right|=\\
C_s\eb^\frac{1+s}{2}2^{\frac{1+s}{2}-\theta}\frac{1}{|\Gamma(-\theta)|}\int_0^\infty r^{\left(\frac{1+s}{2}-\theta\right)-1}e^{-r}dr\|\phi_{\eb,x_0}(-\Dx+1)^\frac{s}{2}u\|_{L^2(\Omega)}=\\
C_s\eb^\frac{1+s}{2}2^{\frac{1+s}{2}-\theta}\frac{\Gamma\left(\frac{1+s}{2}-\theta\right)}{|\Gamma(-\theta)|}\|\phi_{\eb,x_0}(-\Dx+1)^\frac{s}{2}u\|_{L^2(\Omega)},
\end{multline} that completes the proof.
\end{proof}
\begin{rem} Obviously, the above proposition allows to extend Corollary \ref{cor equiv n1} to the case when $\theta\in (0,1)$, that we will use extensively.
\end{rem} 

In the sequel we also need a commutator estimate involving compactly supported weight function $\psi_{x_0}$ such that
\begin{equation}
\label{psi}
\psi_{x_0}\in C^\infty_{0}(\R^3),\ {\rm supp}\psi_{x_0}\subset B^2_{x_0},\ \psi_{x_0}(x)\equiv 1\ {\rm for}\ x\in B^1_{x_0},
\end{equation}
where $x_0$ is some point of $\R^3$
\begin{prop}
\label{prop comest3}
Let $\theta\in(0,\frac{1}{2})$, $\eb\in(0,\eb_0]$ with small enough $\eb_0$ and $u\in L^2_{\{\eb,x_0\}}(\Omega)$. Then the following commutator estimate  holds true
\begin{equation}
\|(-\Dx+1)^\theta(\psi_{x_0} u)-\psi_{x_0}(-\Dx+1)^\theta u\|_{L^2(\Omega)}\leq C2^{\frac{1}{2}-\theta}\frac{\Gamma\left(\frac{1}{2}-\theta\right)}{|\Gamma(-\theta)|}\|\phi_{\eb,x_0}u\|_{L^2(\Omega)},
\end{equation}
for some absolute constant $C$ and small enough $\eb\in(0,\eb_0]$, where  $\psi_{x_0}$ is defined by \eqref{psi}.
\end{prop}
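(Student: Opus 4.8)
The plan is to mimic the proof of Proposition~\ref{prop comest1}, replacing the smooth exponential weight $\phi_{\eb,x_0}$ by the compactly supported cut-off $\psi_{x_0}$, and to exploit the fact that $\psi_{x_0}$ has bounded derivatives supported in $B^2_{x_0}$ rather than decaying derivatives bounded by $\eb\phi_{\eb,x_0}$. As before, using the subordination formula \eqref{fp formula}, the commutator is controlled by
\begin{equation*}
\frac{1}{|\Gamma(-\theta)|}\int_0^\infty\lambda^{-1-\theta}\|v(\lambda)-\psi_{x_0}w(\lambda)\|_{L^2(\Omega)}\,d\lambda,
\end{equation*}
where now $v(\lambda)=e^{-(-\Dx+1)\lambda}(\psi_{x_0}u)$, $w(\lambda)=e^{-(-\Dx+1)\lambda}u$, and $v-\psi_{x_0}w$ solves the analogue of \eqref{3.2} with right-hand side $-2\nabla\psi_{x_0}\nabla w-w\Dx\psi_{x_0}$ and zero initial data.

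Multiplying the equation for $v-\psi_{x_0}w$ by $v-\psi_{x_0}w$ and integrating by parts exactly as in \eqref{3.3}, I obtain the identity with terms $A_1=2(w,\nabla\psi_{x_0}\nabla(v-\psi_{x_0}w))$ and $A_2=(w\Dx\psi_{x_0},v-\psi_{x_0}w)$. Since $\psi_{x_0}\in C^\infty_0(\R^3)$ we have $|\nabla\psi_{x_0}|\leq C$ and $|\Dx\psi_{x_0}|\leq C$, both supported in $B^2_{x_0}$; hence $|\nabla\psi_{x_0}\,w|\leq C\mathbf{1}_{B^2_{x_0}}|w|\leq C\phi_{\eb,x_0}^{-1}(x_0\text{-scale})\phi_{\eb,x_0}|w|$, i.e. on $B^2_{x_0}$ one has $\phi_{\eb,x_0}(x)\geq e^{-\eb\sqrt{5}}\geq c>0$ uniformly for $\eb\leq\eb_0$, so $|w|\leq C\phi_{\eb,x_0}|w|$ on the support. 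This lets me bound $A_1$ and $A_2$ by $C\|\phi_{\eb,x_0}w\|_{L^2}\|\nabla(v-\psi_{x_0}w)\|_{L^2}$ and $C\|\phi_{\eb,x_0}w\|_{L^2}\|v-\psi_{x_0}w\|_{L^2}$ respectively, then by Young's inequality absorb the $H^1$ and $L^2$ pieces of $v-\psi_{x_0}w$ into the left-hand side of the energy identity. As in \eqref{3.6}, multiplying the $w$-equation by $\phi^2_{\eb,x_0}w$ and using $|\nabla\phi_{\eb,x_0}|\leq\eb\phi_{\eb,x_0}$ gives $\|\phi_{\eb,x_0}w(\lambda)\|^2_{L^2}\leq e^{-\lambda}\|\phi_{\eb,x_0}u\|^2_{L^2}$ for $\eb\leq\eb_0$. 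Putting these together yields
\begin{equation*}
\frac{d}{d\lambda}\|v-\psi_{x_0}w\|^2_{L^2}+\|v-\psi_{x_0}w\|^2_{L^2}\leq Ce^{-\lambda}\|\phi_{\eb,x_0}u\|^2_{L^2},
\end{equation*}
whence $\|v(\lambda)-\psi_{x_0}w(\lambda)\|_{L^2}\leq \sqrt{C}\,\lambda^{1/2}e^{-\lambda/2}\|\phi_{\eb,x_0}u\|_{L^2}$. Note the crucial difference: there is no factor $\eb^{1/2}$ here, because $|\nabla\psi_{x_0}|$ is $O(1)$, not $O(\eb)$; this is exactly why the final bound has no power of $\eb$ in front.

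Finally I substitute this bound into the $\lambda$-integral, perform the change of variables $s=\lambda/2$, and recognize $\int_0^\infty s^{(1/2-\theta)-1}e^{-s}\,ds=\Gamma\!\left(\tfrac12-\theta\right)$, which converges precisely because $\theta<\tfrac12$. This produces the claimed estimate with constant $C\,2^{1/2-\theta}\Gamma\!\left(\tfrac12-\theta\right)/|\Gamma(-\theta)|$. The only subtle point — and the step I expect to require the most care — is justifying the uniform-in-$\eb$ comparison $|w|\leq C\phi_{\eb,x_0}|w|$ on $\mathrm{supp}\,\psi_{x_0}$: one must check that the constant $C$ can be taken independent of $x_0$ and of $\eb\in(0,\eb_0]$, which follows since on $B^2_{x_0}$ we have $1\leq \phi_{\eb,x_0}(x)^{-1}=e^{\eb\sqrt{1+|x-x_0|^2}}\leq e^{\eb_0\sqrt5}$, a bound uniform in both parameters. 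Everything else is a direct transcription of the computations already carried out in Proposition~\ref{prop comest1}.
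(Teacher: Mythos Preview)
Your proof is correct and follows essentially the same approach as the paper: the paper likewise reduces to the heat-semigroup difference $v-\psi_{x_0}w$, uses the compact support of $\nabla\psi_{x_0}$ and $\Dx\psi_{x_0}$ in $B^2_{x_0}$ to bound $\|w\|_{L^2(\Omega\cap B^2_{x_0})}\leq C\|\phi_{\eb,x_0}w\|_{L^2(\Omega)}$, combines with the weighted decay \eqref{3.6}, and then integrates in $\lambda$ exactly as in \eqref{3.7}--\eqref{3.7.1}. Your explicit remark on why the $\eb^{1/2}$ factor disappears and on the uniformity in $x_0$ and $\eb$ of the comparison constant on $\mathrm{supp}\,\psi_{x_0}$ is a welcome clarification of a point the paper leaves implicit.
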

\begin{proof}
Similar to Proposition \ref{prop comest1} we introduce $v(\lambda):=e^{-(-\Dx +1)\lambda}(\psi_{x_0}u)$ and $w(\lambda):=e^{-(-\Dx +1)\lambda}u$. Repeating \eqref{3.1}-\eqref{3.3}
with $\psi_{x_0}$ instead of $\phi_{\eb,x_0}$ we deduce analogue of \eqref{3.3} 
\begin{multline}
\label{3.3 psi}
\frac{1}{2}\frac{d}{d\lambda}\|v-\phi_{\eb,x_0}w\|^2_{L^2(\Omega)}+\|\nabla(v-\phi_{\eb,x_0}w)\|^2_{L^2(\Omega)}+\|v-\phi_{\eb,x_0}w\|^2_{L^2(\Omega)}=\\
2(w\nabla\psi_{x_0}\nabla(v-\psi_{x_0}w))+(w\Dx\psi_{x_0},v-\psi_{x_0}w):=A_1+A_2.
\end{multline}
Due to the fact the $\psi_{x_0}$ is compactly supported the $A_1$ term admits the estimate
\begin{multline}
\label{3.4 psi}
|A_1|\leq C \|w,\Omega\cap B^2_{x_0}\|_{0,2}\|\nabla(v-\psi_{x_0}w)\|_{L^2(\Omega)}\leq C\|\phi_{\eb,x_0}w\|_{L^2(\Omega)}\|\nabla(v-\psi_{x_0}w)\|^2_{L^2(\Omega)}\leq\\
C\|\phi_{\eb,x_0}w\|^2_{L^2(\Omega)}+\frac{1}{2}\|\nabla(v-\psi_{x_0}w)\|_{L^2(\Omega)},
\end{multline}
for some absolute constant $C$, assuming that $\eb\in(0,\eb_0]$.

The $A_2$ term can be estimated analogously
\begin{equation}
\label{3.5 psi}
|A_2|\leq C\|\phi_{\eb,x_0}w\|^2_{L^2(\Omega)}+\frac{1}{2}\|v-\psi_{x_0}w\|^2_{L^2(\Omega)},
\end{equation}
for some absolute constant $C$, assuming that $\eb\in(0,\eb_0]$.

Combining \eqref{3.3 psi}-\eqref{3.5 psi} together with \eqref{3.6} we derive
\begin{equation}
\frac{d}{d\lambda}\|v-\psi_{x_0}w\|^2_{L^2(\Omega)}+\|v-\psi_{x_0}w\|^2_{L^2(\Omega)}\leq
 Ce^{-\lambda}\|\phi_{\eb,x_0}u\|^2_{L^2(\Omega)},
\end{equation}
for some absolute constant $C$, assuming that $\eb\in(0,\eb_0]$ and $\eb_0$ is small enough. Arguing along the lines of \eqref{3.7} -\eqref{3.7.1} we complete the proof. 
\end{proof}

Let $\phi,u$ be two functions on $\Omega$. Throughout the work we use the following notations for the commutator
\begin{align}
&[\phi,(-\Dx+1)^s]u:=\phi(-\Dx+1)^su-(-\Dx+1)^s(\phi u),\ s\in(0,1),\\
&[(-\Dx+1)^s,\phi]u:=(-\Dx+1)^s(\phi u)-\phi(-\Dx+1)^su,\ s\in(0,1).
\end{align}

\section{Existence of infinite-energy solutions}
\label{s.ex}
The aim of this section is to establish existence of infinite-energy solutions to the following semi-linear damped wave equation
\begin{equation}
\begin{cases}
\label{eq main}
\Dt^2u+\gamma(-\Dx+1)^\frac{1}{2}\Dt u-\Dx u+\lambda_0 u +f(u)=g(x),\quad x\in\Omega,\\
u|_{\d\Omega}=0,\ \xi_u(t)|_{t=0}=(u,\d_t u)|_{t=0}=(u_0,u_1),
\end{cases}
\end{equation}
where constants $\gamma$ and $\lambda_0$ are strictly positive and non-linearity $f\in C^1(\R)$ of critical growth and subject to assumptions \eqref{f.growth}, \eqref{f(s)s>-M}.

Further it is convenient to use the notations
\begin{align}
\label{E phi}
&\Ee=\Cal H^1_{\{\eb,x_0\}}(\Omega)\times L^2_{\{\eb,x_0\}}(\Omega),\\
\label{E b}
&\Cal{E}_b=\(H^1_b(\Omega)\cap\{u|_{\d\Omega}=0\}\)\times L^2_{b}(\Omega).
\end{align}
The norms in the corresponding spaces are given by
\begin{align}
\label{E phi norm}
& \xi=(\xi_1,\xi_2)\in \Ee,\ \|\xi\|^2_{\Ee}=\|\phi_{\eb,x_0}\nabla \xi_1\|^2_{L^2(\Omega)}+\lambda_0\|\phi_{\eb,x_0}\xi_1\|^2_{L^2(\Omega)}+\|\phi_{\eb,x_0}\xi_2\|^2_{L^2(\Omega)},\\
\label{E b norm}
& \xi=(\xi_1,\xi_2)\in \E_b,\ \|\xi\|^2_{\E_b}=\|\nabla \xi_1\|^2_{L^2_b(\Omega)}+\lambda_0\|\xi_1\|^2_{L^2_b(\Omega)}+\|\xi_2\|^2_{L^2_b(\Omega)}.
\end{align}
Now let us give the definition of infinite-energy solutions
\begin{Def}
\label{def inf.sol}
Function $u(t)$ such that $\xi_u(t)\in L^\infty([0,T];\E_b)$ and 
\begin{equation}
\label{4.0}
\sup_{x_0\in\R^3}\int_0^T\|\phi_{\eb_0,x_0}(-\Dx+1)^{\frac{1}{2}}\Dt u(t)\|^2_{L^2(\Omega)}dt<\infty,
\end{equation}
for some $\eb_0>0$, to be called infinite energy solution of problem \eqref{eq main} with initial data $(u_0,u_1)\in\E_b$ iff
\begin{multline}
-\int_0^T(\Dt u,\Dt \phi)dt+\gamma\int_0^T(\Dt u,(-\Dx+1)^\frac{1}{2} \phi)dt+\int_0^T(\nabla u,\nabla \phi)dt+\lambda_0\int_0^T(u,\phi)dt\\
\int_0^T(f(u),\phi)dt=\int_0^T(g,\phi)dt,\ \forall\phi\in C^\infty_0((0,T)\times\Omega),
\end{multline}
and $\xi_u|_{t=0}=(u_0,u_1)$.
\begin{rem}\label{rem in data 0}
The definition of solution does not depend on $\eb_0$ in \eqref{4.0}. Indeed, if $\eb\geq\eb_0$ it is straightforward. If $0<\eb<\eb_0$ the statement follows due to the fact that inequality \eqref{4.0} is \emph{uniform} with respect to $x_0$.
\end{rem}

\begin{rem} \label{rem in data}Initial data can be understood as follows. Definition \ref{def inf.sol} implies that both $u$ and $\Dt u$ belong to $L^\infty([0,T];L^2_{\{\eb,x_0\}}(\Omega))$ for arbitrary $\eb>0$, and hence $u\in C([0,T];L^2_{\{\eb,x_0\}}(\Omega))$ for any $\eb>0$. Also writing down the formula for $\Dt^2 u$ from \eqref{eq main}, taking into account Definition \ref{def inf.sol}, we see that $\Dt^2 u\in L^\infty([0,T];\Cal H^{-1}_{\{\eb,x_0\}}(\Omega))$ for any $\eb>0$. This together with $\Dt u\in L^\infty([0,T];\Cal H^{-1}_{\{\eb,x_0\}}(\Omega))$ implies
that $\Dt u\in C([0,T];\Cal H^{-1}_{\{\eb,x_0\}}(\Omega))$. Thus initial data can be naturally understood if we consider $\xi_u$ as continuous function in $\E^{-1}_{\{\eb,x_0\}}:=L^2_{\{\eb,x_0\}}(\Omega)\times \Cal H^{-1}_{\{\eb,x_0\}}(\Omega)$. Furthermore, by classical result (see Lemma 8.1, Chapter 3, \cite{LM}) we conclude that $\xi_u(t)$ is weakly continuous in $\Ee$, that is $\xi_u(t)\in C^w([0,T];\Ee)$. 
\end{rem}  

\end{Def} 
Now we are ready to state the result on the existence of infinite-energy solutions 
\begin{theorem}
\label{th ex}
Let non-linearity $f$ satisfy assumptions \eqref{f.growth}, \eqref{f(s)s>-M}, $\Omega$ be a smooth unbounded domain satisfying \eqref{2.6}, \eqref{2.7}, constants  $\gamma,\lambda_0>0$ and $g\in L^2_b(\Omega)$. Then problem \eqref{eq main} possesses at least one infinite-energy solution $u$ for any initial data $\xi_0=(u_0,u_1)\in\E_b$ on arbitrary segment $[0,T]$. Furthermore, there exists $\eb_0>0$ small enough, such that for every $\eb\in(0,\eb_0]$ the following estimate holds
\begin{multline}
\label{est en}
\|\xi_u(t)\|^2_{\E_b}+\sup_{x_0\in\Omega}\int_{\max\{0,t-1\}}^t\|\phi_{\eb,x_0}(-\Dx+1)^\frac{1}{4}\Dt u(s)\|^2_{L^2(\Omega)}ds\leq\\
 Q_{\eb}(\|\xi_0\|_{\E_b})e^{-\beta t}+Q_{\eb}(\|g\|_{L^2_b(\Omega)}),\quad t\geq 0,
\end{multline} 
for some constant $\beta>0$ and monotone increasing function $Q_{\eb}$ which are independent of $u$ and $t$.
\end{theorem}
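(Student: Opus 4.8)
The plan is to prove Theorem~\ref{th ex} by the standard three-step scheme for infinite-energy solutions: first derive the a priori estimate~\eqref{est en} for sufficiently regular solutions, then construct an approximating sequence (e.g.\ Galerkin in smooth bounded subdomains exhausting $\Omega$, or via the known theory for bounded domains combined with a localization/diagonal argument) whose members satisfy the estimate uniformly, and finally pass to the limit using weak-$*$ compactness in $L^\infty([0,T];\E_b)$ together with the Strichartz-type bound~\eqref{4.0}. Since the non-linearity satisfies only the one-sided bound~\eqref{f(s)s>-M} rather than $f'(s)\geq -C$, no finite-energy solution need exist when $\Omega$ is unbounded, so all estimates must be carried out in the weighted spaces $\Ee$ and then upgraded to the uniformly local norm $\|\cdot\|_{\E_b}$ by taking the supremum over $x_0$, using Corollary~\ref{cor wsp4} and the equivalence of weighted norms from Corollary~\ref{cor wsp3}.

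The core is the weighted energy estimate. First I would multiply equation~\eqref{eq main} by $\phi^2_{\eb,x_0}\Dt u$ and integrate over $\Omega$. The term $\gamma(\phi^2_{\eb,x_0}(-\Dx+1)^{1/2}\Dt u,\Dt u)$ is handled by writing $\phi^2\Dt u=\phi\cdot(\phi\Dt u)$ and using the commutator estimate of Proposition~\ref{prop comest1} (with $\theta=\tfrac14$, after writing $(-\Dx+1)^{1/2}=(-\Dx+1)^{1/4}(-\Dx+1)^{1/4}$) to convert it into $\gamma\|\phi_{\eb,x_0}(-\Dx+1)^{1/4}\Dt u\|^2_{L^2}$ plus a lower-order term absorbable for small $\eb$; this is exactly what produces the dissipative integral term in~\eqref{est en}. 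The gradient term $-(\Dx u,\phi^2\Dt u)$ produces $\tfrac12\tfrac{d}{dt}\|\phi\nabla u\|^2_{L^2}$ up to a commutator $(\nabla u\cdot\nabla(\phi^2),\Dt u)$ controlled by $\eb$ times the energy. The non-linear term contributes $\tfrac{d}{dt}\int_\Omega\phi^2 F(u)\,dx$ where $F(s)=\int_0^s f$, and here assumption~\eqref{f.growth} gives $F(s)\le C(1+|s|^{q+2})$ while~\eqref{f(s)s>-M} gives $F(s)\ge -M$, so $\int\phi^2 F(u)$ is bounded below; its upper bound requires the $H^1_b\hookrightarrow L^6_b$ Sobolev embedding and controls $\int\phi^2|u|^6$ only borderline, so one must be careful—but since $q=4$ is critical, $F$ grows like $|u|^6$ and one uses the energy norm exactly. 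To close the estimate into exponential decay one adds a small multiple of the equation tested against $\phi^2 u$ to generate a coercive term $\nu\|\phi\nabla u\|^2+\nu\lambda_0\|\phi u\|^2$ on the left, at the cost of $\nu(f(u),\phi^2 u)\ge -\nu C$ by~\eqref{f(s)s>-M} and a cross term $\nu(\Dt u,\phi^2 u)$ absorbed into the energy.

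The main obstacle is that, because only~\eqref{f(s)s>-M} is assumed, the functional on the left-hand side of the energy identity is \emph{not} obviously bounded below by a positive multiple of $\|\xi_u\|^2_{\Ee}$, and the crude Gronwall argument does not immediately give the dissipative estimate; this is precisely why the paper defers the full dissipative estimate to the Appendix and invokes a non-standard Gronwall-type inequality. Accordingly, I would first prove~\eqref{est en} with $Q_\eb$ possibly growing in $T$ on any finite segment (enough for existence on $[0,T]$), deferring the genuinely dissipative ($T$-independent, exponentially decaying) version to the Gronwall variant in the Appendix. For existence itself, I would note that in each smooth bounded subdomain $\Omega\cap V_{x_0}$ the finite-energy theory (cf.\ the results cited from \cite{SZ2014}, or direct Galerkin) yields approximate solutions; the uniform-in-$x_0$ weighted bounds let one extract, via Banach--Alaoglu and Aubin--Lions (compactness of $\Cal H^1_{\{\eb,x_0\}}\hookrightarrow L^2_{\{\eb,x_0\}}$ on bounded pieces) a subsequence converging strongly in $L^2([0,T];L^2_{loc})$, which suffices to pass to the limit in $f(u)$ since $f$ is continuous and $f(u_n)$ is bounded in $L^{6/5}_{loc}$. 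Verification that the limit satisfies the weak formulation and attains the initial data in the sense of Remark~\ref{rem in data} is then routine.
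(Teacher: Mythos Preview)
Your outline follows the paper's strategy closely: multiply by $\phi_{\eb,x_0}^2(\Dt u+\delta u)$, use Proposition~\ref{prop comest1} with $\theta=\tfrac14$ to extract $\|\phi_{\eb,x_0}(-\Dx+1)^{1/4}\Dt u\|^2_{L^2}$ from the damping term, and close via the Gronwall-type lemma in the Appendix. Two small corrections. First, \eqref{f(s)s>-M} does \emph{not} give $F(s)\ge -M$; it gives only $F(s)\ge -C-M\ln(1+|s|)\ge -\tfrac{\lambda_0}{8}s^2-C'$, which is what the paper uses in \eqref{F(u)>-au^2-C}. Second, you slightly misidentify the obstacle: the modified energy $\mathbb E_{\eb,x_0}(\xi_u)$ \emph{is} bounded below by $\tfrac12\|\xi_u\|^2_{\Ee}$ (see \eqref{xi from ab}); the real difficulty is the \emph{upper} bound. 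Because $F$ grows like $|u|^6$, one only gets $\mathbb E_{\eb,x_0}\le C\|\xi_u\|^6_{\E_{\{\eb/3,x_0\}}}+\ldots$, so the coercive term $\delta\|\xi_u\|^2_{\Ee}$ controls only $(\mathbb E_{3\eb,x_0})^{1/3}$, with a \emph{tripled} weight. The resulting inequality $\tfrac{d}{dt}Y_{x_0}+\kappa\,y_{x_0}^{1/3}\le H$ (cf.\ \eqref{app.3}) involves two different functionals that are equivalent only after taking $\sup_{x_0}$; this is precisely what Lemma~\ref{lem.gr} is designed to handle, and is the mechanism you should make explicit rather than defer.
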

Since the existence of infinite-energy solutions is not the main subject of the work we give its proof in Appendix \ref{sec.Appendix}. 
\section{Extra regularity: finite-energy case}
\label{s.exreg.fe}
Before considering the case of infinite-energy solutions we would like to demonstrate the main idea of the work in the simpler case of Dirichlet problem for equation \eqref{eq main} in a smooth \emph{bounded} domain $\Omega$. The energy space $\E$ associated with problem \eqref{eq main} in this case is given by
\begin{align}
\label{E}
&\E=H^1_0(\Omega)\times L^2(\Omega),\\
\label{E norm}
&\xi=(\xi_1,\xi_2)\in\E,\ \|\xi\|^2_{\E}=\|\nabla\xi_1\|^2_{L^2(\Omega)}+\lambda_0\|\xi_1\|^2_{L^2(\Omega)}+\|\xi_2\|^2_{L^2(\Omega)}.
\end{align}
The energy solutions, as previously, are understood in the sense of distributions, one just should substitute $\phi_{\eb,x_0}$ by $1$ in Definition \ref{def inf.sol}. 

Let us recall the result on the existence of finite-energy solutions to problem \eqref{eq main} in smooth bounded domains (see \cite{KZ2009,SZ2014}). This result can be also obtained as in Theorem \ref{th ex} with even simpler proof, since in this case we do not need to work with weighted spaces and use commutator estimates.
\begin{theorem}
\label{th ex.bdd}
Let non-linearity $f$ satisfy assumptions \eqref{f.growth}, \eqref{f(s)s>-M}, $\Omega$ be a smooth \emph{bounded} domain satisfying \eqref{2.6}, \eqref{2.7}, constants  $\gamma,\lambda_0>0$ and $g\in L^2(\Omega)$. Then problem \eqref{eq main} possesses at least one finite-energy (energy for brevity) solution $u$ for any initial data $\xi_0=(u_0,u_1)\in\E$ on arbitrary segment $[0,T]$. Furthermore,  the following estimate holds
\begin{equation}
\label{est en.bdd}
\|\xi_u(t)\|^2_{\E}+\int_{\max\{0,t-1\}}^t\|(-\Dx+1)^\frac{1}{4}\Dt u(s)\|^2_{L^2(\Omega)}ds\leq\\
 Q(\|\xi_0\|_{\E})e^{-\beta t}+Q(\|g\|_{L^2(\Omega)}),\quad t\geq 0,
\end{equation} 
for some constant $\beta>0$ and monotone increasing function $Q$ which are independent of $u$ and $t$. 
\end{theorem}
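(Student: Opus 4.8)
The plan is to carry out the scheme used for Theorem~\ref{th ex} (given in Appendix~\ref{sec.Appendix}), dropping all the weights $\phi_{\eb,x_0}$ and commutator estimates since $\Omega$ is now bounded. First I would set up a Galerkin approximation: let $\{e_j\}$ be the $L^2(\Omega)$-orthonormal Dirichlet eigenfunctions of $-\Dx$, let $P_N$ be the orthogonal projection onto $\mathrm{span}\{e_1,\dots,e_N\}$, and let $u_N$ solve the finite-dimensional system $P_N\big(\Dt^2 u_N+\gamma(-\Dx+1)^{1/2}\Dt u_N-\Dx u_N+\lambda_0 u_N+f(u_N)-g\big)=0$, $\xi_{u_N}(0)=P_N\xi_0$; local solvability follows from elementary ODE theory and the solution is global once the a priori bounds below are in force. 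Testing with $\Dt u_N$ gives the energy identity
\[
\frac{d}{dt}E(u_N)+\gamma\|(-\Dx+1)^{1/4}\Dt u_N\|^2_{L^2(\Omega)}=0,\qquad E(v):=\tfrac12\|\Dt v\|^2+\tfrac12\|\nabla v\|^2+\tfrac{\lambda_0}{2}\|v\|^2+(F(v),1)-(g,v),
\]
with $F(s)=\int_0^s f(\tau)\,d\tau$; the damping produces a nonnegative dissipation because $\big((-\Dx+1)^{1/2}w,w\big)=\|(-\Dx+1)^{1/4}w\|^2_{L^2(\Omega)}$. Testing with $u_N$ instead yields the auxiliary identity
\[
\frac{d}{dt}(\Dt u_N,u_N)+\|\nabla u_N\|^2+\lambda_0\|u_N\|^2+(f(u_N),u_N)-(g,u_N)=\|\Dt u_N\|^2-\gamma\big((-\Dx+1)^{1/2}\Dt u_N,u_N\big).
\]
Throughout I would use the spectral inequality $\|w\|_{L^2}\le\|(-\Dx+1)^{1/4}w\|_{L^2}$ (valid since $-\Dx+1\ge 1$), so that the $\|\Dt u_N\|^2$-terms above are absorbed by the dissipation.

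The heart of the matter is the dissipative estimate. Assumption~\eqref{f(s)s>-M} gives $(f(v),v)\ge-M|\Omega|$ on the bounded domain, and integrating it one gets the one-sided bound $(F(v),1)\ge-\delta\|v\|^2-C_\delta$ for every $\delta>0$; together with $|(g,v)|\le\tfrac{\lambda_0}{8}\|v\|^2+C\|g\|^2_{L^2(\Omega)}$ this makes $E$ coercive, $E(v)\ge c_0\|\xi_v\|^2_{\E}-C(1+\|g\|^2_{L^2(\Omega)})$, while \eqref{f.growth} (hence $|F(s)|\le C(1+|s|^6)$) together with $H^1_0(\Omega)\hookrightarrow L^6(\Omega)$ gives the upper bound $E(v)\le Q(\|\xi_v\|_{\E})$. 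Consequently the energy identity already forces $E(u_N(t))$ to be non-increasing and $\|\xi_{u_N}(t)\|_{\E}$ to be bounded uniformly in $t$ and $N$. To upgrade this to exponential attraction I would introduce the perturbed functional $L_{\eb}:=E+\eb(\Dt u_N,u_N)$ with $\eb>0$ small; differentiating it, inserting the auxiliary identity, using Young's inequality and $(f(u_N),u_N)\ge-M|\Omega|$, and absorbing the cross term $\eb\gamma\big((-\Dx+1)^{1/2}\Dt u_N,u_N\big)$ into the dissipation and into $\eb\|\nabla u_N\|^2+\eb\lambda_0\|u_N\|^2$, one arrives at a differential inequality of the form
\[
\frac{d}{dt}L_{\eb}+\beta_0\big(\|(-\Dx+1)^{1/4}\Dt u_N\|^2+\|\nabla u_N\|^2+\|u_N\|^2\big)\le C(1+\|g\|^2_{L^2(\Omega)}).
\]
The delicate point — and the reason one cannot simply close the argument by the usual ``functional equivalent to the energy norm'' device — is that under \eqref{f(s)s>-M} alone (no lower bound \eqref{f'>-c} on $f'$, no convexity-type condition $f(s)s\ge cF(s)-C$) the potential $(F(u_N),1)$ is controlled only by $\|\xi_{u_N}\|^6_{\E}$, so $L_{\eb}$ is not bounded above by a multiple of $\|\xi_{u_N}\|^2_{\E}$ and the above inequality does not self-improve to $\frac{d}{dt}L_{\eb}+\beta L_{\eb}\le C$ with constants independent of $\|\xi_0\|_{\E}$. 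This is exactly what the Gronwall-type lemma of Appendix~\ref{sec.Appendix} is designed for: fed with the last inequality, the monotonicity and coercivity of $E$, and the observation that the dissipation still controls from below a positive power of $E$ (a consequence of the sextic upper bound on $(F(u_N),1)$), it yields \eqref{est en.bdd} with the non-decaying part depending on $\|g\|_{L^2(\Omega)}$ only; the space-time term there then follows by integrating the differential inequality over $[\max\{0,t-1\},t]$. I expect this to be the main obstacle.

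It remains to pass to the limit $N\to\infty$. The uniform estimates bound $u_N$ in $L^\infty([0,T];H^1_0(\Omega))$, $\Dt u_N$ in $L^\infty([0,T];L^2(\Omega))\cap L^2([0,T];\Cal H^{1/2}(\Omega))$, and, reading $\Dt^2 u_N$ off the equation (with $f(u_N)$ bounded in $L^\infty([0,T];L^{6/5}(\Omega))$ by the critical growth of $f$ and $H^1\hookrightarrow L^6$), also $\Dt^2 u_N$ in $L^\infty([0,T];\Cal H^{-1}(\Omega))$. By the Aubin--Lions--Simon lemma $\{u_N\}$ is precompact in $C([0,T];L^p(\Omega))$ for every $p<6$; passing to a subsequence, $u_N\to u$ in $C([0,T];L^p)$ and a.e., hence $f(u_N)\to f(u)$ a.e., and being bounded in $L^\infty([0,T];L^{6/5}(\Omega))$ it converges to $f(u)$ weakly, which suffices to pass to the limit in the weak formulation; the remaining linear terms pass by weak-$*$ convergence, and $\xi_{u_N}(0)=P_N\xi_0\to\xi_0$ in $\E$ identifies the initial data, so $u$ is an energy solution. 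Finally \eqref{est en.bdd} transfers to $u$ by weak lower semicontinuity of the norms and of the dissipation integral. Besides the sharp dissipative estimate discussed above, the only genuinely non-routine ingredient here is the handling of the critical quintic nonlinearity in the limit, which relies on the embedding $H^1(\Omega)\hookrightarrow L^p(\Omega)$ being \emph{compact} for $p<6$.
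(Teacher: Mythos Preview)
Your proposal is correct and follows exactly the route the paper indicates: the paper does not give a self-contained proof of Theorem~\ref{th ex.bdd} but states that it ``can be also obtained as in Theorem~\ref{th ex} with even simpler proof, since in this case we do not need to work with weighted spaces and use commutator estimates,'' and your write-up is precisely the unweighted specialisation of the Appendix argument, including the crucial observation that under \eqref{f(s)s>-M} alone one only gets $\frac{d}{dt}L_\eb+\kappa L_\eb^{1/3}\le C$ rather than a linear Gronwall, which is handled by Lemma~\ref{lem.gr} (here with trivial parameter set $\Sigma$). One small cosmetic remark: in the paper's functional the cross term $\gamma((-\Dx+1)^{1/2}\Dt u,u)=\tfrac{\gamma}{2}\tfrac{d}{dt}\|(-\Dx+1)^{1/4}u\|^2$ is moved into $L_\eb$ rather than absorbed, but your absorption via Young's inequality works equally well in the bounded case.
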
  
\begin{rem}
\label{rem 5.1}
Theorem \ref{th ex.bdd} is valid for arbitrary $q>4$ as well if one substitutes energy space $\E$ by $\E_q=H^1_0(\Omega)\cap L^{q+2}(\Omega)\times L^2(\Omega)$ (see \cite{CV} for the case $\theta=0$, for $\theta\in(0,1]$ it is analogous).
\end{rem} 

As it was noticed in \cite{SZ2014} the only energy estimate \eqref{est en.bdd} is not enough to obtain \emph{uniqueness} of the energy solutions to the considered problem in the quintic case and one needs more information on the regularity of solutions to prove it. In the next theorem we would like to show how the required extra regularity can be obtained in a more optimal way in comparison with \cite{SZ2014}. 

\begin{theorem}
\label{th adreg1.bdd}
Let assumption of Theorem \ref{th ex.bdd} be satisfied. Then for every initial data $\xi_0\in\E$ the energy solution of problem \eqref{eq main} possesses extra regularity $u\in L^4([0,T];L^{12}(\Omega))$ on arbitrary segment $[0,T]$ and the following estimate holds
\begin{equation}
\label{est adreg1.bdd}
\int_{\max\{0,t-1\}}^t\|u(s)\|^4_{L^{12}(\Omega)}ds\leq Q(\|\xi_0\|_{\E})e^{-\beta t}+Q(\|g\|_{L^2(\Omega)}), \quad t\geq 0,
\end{equation}
for some constant $\beta>0$ and monotone increasing function $Q$ which are independent of $u$ ant $t$.
\end{theorem}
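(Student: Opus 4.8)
The plan is to derive the extra regularity from a new dissipative identity obtained by testing equation \eqref{eq main} with $u^3$. Since $u^3$ need not be an admissible test function in the energy class, one first works with a truncated multiplier (for instance $u\min\{u^2,N\}$, or $u^3/(1+\delta u^2)$, which belongs to $H^1_0(\Omega)$ whenever $u$ does and whose gradient is then $L^2$), derives all the estimates below with constants independent of the truncation parameter, and then passes to the limit. Writing $A:=-\Dx+1$ and using $(\Dt^2u,u^3)=\frac{d}{dt}(\Dt u,u^3)-3\int_\Omega u^2|\Dt u|^2\,dx$ together with $(-\Dx u,u^3)=3\int_\Omega u^2|\nabla u|^2\,dx$, the identity reads schematically
\begin{equation*}
\frac{d}{dt}(\Dt u,u^3)+3\int_\Omega u^2|\nabla u|^2\,dx+\lambda_0\int_\Omega u^4\,dx=3\int_\Omega u^2|\Dt u|^2\,dx-\gamma\bigl(A^{\frac12}\Dt u,u^3\bigr)-(f(u),u^3)+(g,u^3).
\end{equation*}
The two terms on the left besides $\frac{d}{dt}(\Dt u,u^3)$ are the \emph{good} terms: since $\int_\Omega u^2|\nabla u|^2\,dx=\tfrac14\|\nabla(u^2)\|_{L^2(\Omega)}^2$, $u^2|_{\d\Omega}=0$, and $u\in L^\infty([0,T];L^6(\Omega))$ by Theorem \ref{th ex.bdd}, integrating in time and using the embedding $H^1_0(\Omega)\hookrightarrow L^6(\Omega)$ gives $u^2\in L^2([0,T];L^6(\Omega))$, that is, $u\in L^4([0,T];L^{12}(\Omega))$, with the quantitative bound \eqref{est adreg1.bdd} once the right-hand side is controlled after integration over $[\max\{0,t-1\},t]$.

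Three of the four terms on the right are handled directly, and no one-sided bound on $f'$ is used. For the nonlinear term, writing $f(u)u^3=(f(u)u)\,u^2\ge -M u^2$ (using only \eqref{f(s)s>-M} and $u^2\ge0$) gives $-(f(u),u^3)\le M\|u\|_{L^2(\Omega)}^2$, which is absorbed into the energy. The term $3\int_\Omega u^2|\Dt u|^2\,dx$ is controlled using the Strichartz-type information already contained in \eqref{est en.bdd}: by H\"older and the $3$-dimensional embedding $H^{1/2}(\Omega)\hookrightarrow L^3(\Omega)$,
\begin{equation*}
\int_\Omega u^2|\Dt u|^2\,dx\le\|u\|_{L^6(\Omega)}^2\|\Dt u\|_{L^3(\Omega)}^2\le C\|u\|_{L^6(\Omega)}^2\bigl\|(-\Dx+1)^{\frac14}\Dt u\bigr\|_{L^2(\Omega)}^2,
\end{equation*}
whose time integral over $[\max\{0,t-1\},t]$ is bounded by $\|u\|_{L^\infty([0,T];L^6(\Omega))}^2$ times the quantity controlled in \eqref{est en.bdd}. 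Finally $(g,u^3)\le\|g\|_{L^2(\Omega)}\|u\|_{L^6(\Omega)}^3$, and the boundary contributions $(\Dt u,u^3)$ coming from $\frac{d}{dt}(\Dt u,u^3)$ are bounded by $\|\Dt u\|_{L^2(\Omega)}\|u\|_{L^6(\Omega)}^3$; both are controlled by the energy through \eqref{est en.bdd}.

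The main obstacle is the fractional damping term $\gamma(A^{1/2}\Dt u,u^3)=\gamma\bigl((-\Dx+1)^{1/4}\Dt u,(-\Dx+1)^{1/4}(u^3)\bigr)$. The first factor is controlled in $L^2(\Omega)$ with an $L^2$-in-time bound by \eqref{est en.bdd}, so by Cauchy--Schwarz and Young's inequality it suffices to estimate $\|(-\Dx+1)^{1/4}(u^3)\|_{L^2(\Omega)}=\|u^3\|_{H^{1/2}(\Omega)}$ by $\eta\|\nabla(u^2)\|_{L^2(\Omega)}^2$ plus a quantity depending only on the energy, with $\eta>0$ arbitrarily small, so that the first part is absorbed into the good term. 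This is where the commutator estimates for $(-\Dx+1)^{s}$ of Section \ref{s.cest} and Gagliardo--Nirenberg interpolation enter: one writes $u^3=u\cdot u^2$, distributes the fractional derivative using those estimates, and exploits that the "rough" factor is always multiplied by the low-order norm $\|u\|_{L^\infty([0,T];L^6(\Omega))}$; the truncation introduced at the outset is precisely what makes these manipulations legitimate and keeps the constants from depending on the truncation parameter. Putting everything together yields a differential inequality $\frac{d}{dt}\Phi(t)+c\bigl(\|\nabla(u^2)\|_{L^2(\Omega)}^2+\|u\|_{L^4(\Omega)}^4\bigr)\le h(t)$ with $|\Phi(t)|\le Q(\|\xi_u(t)\|_{\E})$ and $\int_{\max\{0,t-1\}}^t h(s)\,ds$ dominated by the right-hand side of \eqref{est en.bdd}; integrating over $[\max\{0,t-1\},t]$, bounding $\|\xi_u(t)\|_{\E}$ via \eqref{est en.bdd}, and combining the resulting bound over consecutive unit time intervals to recover the factor $e^{-\beta t}$, gives \eqref{est adreg1.bdd}. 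I expect this uniform-in-truncation control of $\|u^3\|_{H^{1/2}(\Omega)}$ — showing that the fractional damping does not spoil the estimate in the critical case $\theta=\tfrac12$ — to be the crux of the argument.
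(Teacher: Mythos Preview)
Your overall strategy---multiply by $u^3$, extract the good term $3\int_\Omega u^2|\nabla u|^2\,dx$, and control everything else via the energy estimate \eqref{est en.bdd}---matches the paper exactly, including the treatment of the nonlinear term via \eqref{f(s)s>-M}, the $u^2|\Dt u|^2$ term via $H^{1/2}\hookrightarrow L^3$, and the boundary term $(\Dt u,u^3)$.

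The one real divergence is how you handle the fractional damping term, and here your proposal has a gap. You write $(A^{1/2}\Dt u,u^3)=(A^{1/4}\Dt u,A^{1/4}(u^3))$ and claim that the commutator estimates of Section~\ref{s.cest} let you control $\|u^3\|_{H^{1/2}}$ by writing $u^3=u\cdot u^2$ and ``distributing the fractional derivative.'' But the commutator estimates in Section~\ref{s.cest} are for commuting $(-\Dx+1)^\theta$ with the \emph{weight functions} $\phi_{\eb,x_0}$ or $\psi_{x_0}$; they say nothing about a fractional Leibniz rule for $u\cdot u^2$, so the mechanism you invoke is not available. (One can in fact bound $\|u^3\|_{H^{1/2}}$ via the Sobolev embedding $W^{1,3/2}(\Omega)\hookrightarrow H^{1/2}(\Omega)$ in three dimensions, obtaining $\|u^3\|_{H^{1/2}}\le C\|u^2\nabla u\|_{L^{3/2}}\le C\|u\|_{L^{12}}^2\|\nabla u\|_{L^2}$, which closes after Young's inequality; but this is not what you wrote.)

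The paper avoids this difficulty entirely with a one-line trick: integrate by parts in $t$, not in the fractional derivative. That is,
\[
\gamma\bigl(A^{1/2}\Dt u,\,u^3\bigr)=\frac{d}{dt}\,\gamma\bigl(A^{1/2}u,\,u^3\bigr)-3\gamma\bigl(A^{1/2}u,\,u^2\Dt u\bigr),
\]
so the operator $A^{1/2}$ now falls on $u$ rather than on $u^3$. The first term is a total derivative and, after integration, is bounded by $\|u\|_{H^1}\|u\|_{L^6}^3\le C\|\xi_u\|_{\E}^4$. The second is estimated by H\"older with exponents $2,3,6$:
\[
\bigl|(A^{1/2}u,\,u^2\Dt u)\bigr|\le C\|\nabla u\|_{L^2}\|\Dt u\|_{L^3}\|u\|_{L^{12}}^2\le \eta\|u\|_{L^{12}}^4+C_\eta\|\nabla u\|_{L^2}^2\|A^{1/4}\Dt u\|_{L^2}^2,
\]
and the $\eta$-term is absorbed into the good term. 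No fractional product rule, no commutators---just the identity $A^{1/2}u\in L^2$ when $u\in H^1_0$.
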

\begin{proof}
At the moment we would like to explain the idea. The rigorous proof in the more difficult case of infinite-energy solutions is given in the next section.

Let us, formally, multiply equation \eqref{eq main} by $u^3$ (exactly this step should be justified). One finds
\begin{multline}
\frac{d}{dt}\left((\Dt u,u^3)+\gamma((-\Dx+1)^\frac{1}{2}u,u^3)\right)+3(|\nabla u|^2,u^2)+\lambda_0\|u\|^4_{L^4(\Omega)}+(f(u)u,u^2)=\\
(g,u^3)+3\gamma((-\Dx+1)^\frac{1}{2}u,u^2\Dt u)+3(u^2,(\Dt u)^2).
\end{multline}
The desired extra regularity comes from the estimate
\begin{equation}
\|u(t)\|^4_{L^{12}(\Omega)}=\|u^2(t)\|^2_{L^6(\Omega)}\leq C\|\nabla(u^2(t))\|^2_{L^2(\Omega)}\leq C\int_\Omega |\nabla u(t)|^2|u(t)|^2dx.
\end{equation} 
It appears that all other terms are subordinated. In particular, thanks to the extra smoothness of $\Dt u$ induced by the fractional damping and continuous embedding $\Cal H^\frac{1}{2}_{\{0,0\}}(\Omega)\subset L^3(\Omega)$ the bad term coming from $\Dt^2 u$ can be controlled as follows
\begin{equation}
(u^2,(\Dt u)^2)\leq \|u\|^2_{L^6(\Omega)}\|\Dt u\|^2_{L^3(\Omega)}\leq\|\nabla u\|^2\|(-\Dx+1)^\frac{1}{4}\Dt u\|^2_{L^2(\Omega)}.
\end{equation}
Also due to Holder inequality with exponents $2$, $3$ and $6$  we have
\begin{multline}
|((-\Dx+1)^\frac{1}{2}u,\Dt u\cdot u^2)|\leq C\|\nabla u\|_{L^2(\Omega)}\|\Dt u\|_{L^3(\Omega)}\|u\|^2_{L^{12}(\Omega)}\leq\\
 \eb \|u\|^4_{L^{12}(\Omega)}+C_\eb \|(-\Dx+1)^\frac{1}{4}\Dt u\|^2_{L^2(\Omega)}\|\nabla u\|^2_{L^2(\Omega)}, 
\end{multline}
for arbitrary $\eb>0$ which will be fixed small below.

Finally due to dissipative assumption \eqref{f(s)s>-M} we have
\begin{equation}
(f(u)u,u^2)\geq -M\|u\|^2_{L^2(\Omega)}.
\end{equation}
Combining the above estimates, and fixing $\eb>0$ small enough we derive
\begin{multline}
\frac{d}{dt}(\Dt u,u^3)+\kappa\|u\|^4_{L^{12}(\Omega)}\leq\\
 C\|(-\Dx+1)^\frac{1}{4}\Dt u\|^2_{L^2(\Omega)}\|\nabla u\|^2_{L^2(\Omega)}+\|g\|_{L^2(\Omega)}\|\nabla u\|^3_{L^2(\Omega)}+M\|u\|^2_{L^2(\Omega)},
\end{multline}
for some $\kappa>0$.

Integrating the above inequality from $\max\{0,t-1\}$ to $t$ and using dissipative estimate \eqref{est en.bdd} one derives the required result.
\end{proof}
\begin{rem}
\label{rem 5.2}
Due to Remark \ref{rem 5.1} and the fact that the above proof does not essentially use growth assumption \eqref{f.growth} we conclude that Theorem \ref{th adreg1.bdd} remains valid for arbitrary $q>4$ either if the corresponding energy space $\E$ is changed to $\E_q$. However, it seems that for the case $q>4$ the obtained extra regularity is still insufficient for the uniqueness.    
\end{rem}
\begin{rem}
\label{rem 5.3}
In contrast to the case of pure wave equation (\cite{Sogge2009,stri}) or damped wave equation (\cite{KSZ,S ADE}), the uniqueness of energy solutions for problem \eqref{eq main} which is obtained based on this regularity is unconditional, that is valid for \emph{any} energy solution. 
\end{rem}
\section{Extra regularity: infinite-energy case}
\label{s.exreg.ie}
Now we would like to focus on additional regularity of infinite-energy solutions.
\begin{theorem}
\label{th adreg1}
Let assumptions of Theorem \ref{th ex} be satisfied and $\eb\in(0,\eb_0]$ be small enough. Then for every initial data $\xi_0=(u_0,u_1)\in\E_b$ the infinite-energy solution of problem \eqref{eq main} possesses extra regularity $u\in L^4([0,T];L^{12}_{\{\eb,x_0\}}(\Omega))$ on arbitrary segment $[0,T]$ and the following estimate holds
\begin{equation}
\label{est adreg1}
\sup_{x_0\in \Omega}\int_{\max\{0,t-1\}}^t\|\phi_{\eb,x_0}u(s)\|^4_{L^{12}(\Omega)}ds\leq Q_\eb(\|\xi_0\|_{\E_b})e^{-\beta t}+Q_\eb(\|g\|_{L^2_b(\Omega)}), \quad t\geq 0,
\end{equation}
for some constant $\beta>0$ and monotone increasing function $Q_\eb$ which are independent of $u$ ant $t$.
\end{theorem}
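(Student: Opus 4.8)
The plan is to reproduce the computation sketched for Theorem~\ref{th adreg1.bdd}, but in the weighted setting: one multiplies equation~\eqref{eq main} by the localized test function $\phi_{\eb,x_0}^4\,u^3$ (so that, after integration by parts, the left-hand side produces the term $\int_\Omega\phi_{\eb,x_0}^4u^2|\nabla u|^2\,dx$, which in turn controls $\|\phi_{\eb,x_0}u\|_{L^{12}}^4$). As in the bounded case this multiplication is only formal, and its rigorous justification — which here must also accommodate the weighted pairings and the commutator estimates of Section~\ref{s.cest} — is the \emph{main obstacle}; I would carry it out via the approximation procedure underlying Theorem~\ref{th ex} (exhausting $\Omega$ by bounded subdomains, or replacing $u^3$ by a bounded Lipschitz truncation $\beta_N(u)$ and letting $N\to\infty$ with Fatou's lemma), so that below it suffices to derive the a priori bound. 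Writing $\phi=\phi_{\eb,x_0}$ (recall $|\nabla\phi|\le\eb\phi$, $|\Dx\phi|\le C\eb\phi$, and $\phi^4=\phi_{4\eb,x_0}$ is again a weight of the same type) and moving part of the damping term under the time derivative by the product rule, i.e. $\gamma((-\Dx+1)^{1/2}\Dt u,\phi^4u^3)=\gamma\frac{d}{dt}((-\Dx+1)^{1/2}u,\phi^4u^3)-3\gamma((-\Dx+1)^{1/2}u,\phi^4u^2\Dt u)$, one arrives at
\[
\frac{d}{dt}\left[(\Dt u,\phi^4u^3)+\gamma\left((-\Dx+1)^{1/2}u,\phi^4u^3\right)\right]+3\int_\Omega\phi^4u^2|\nabla u|^2\,dx+\lambda_0\|\phi u\|_{L^4}^4+(f(u)u,\phi^4u^2)=R,
\]
with $R=-4\int_\Omega\phi^3u^3\,\nabla\phi\cdot\nabla u\,dx+(g,\phi^4u^3)+3(\phi^4u^2,(\Dt u)^2)+3\gamma\left((-\Dx+1)^{1/2}u,\phi^4u^2\Dt u\right)$.

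The gain comes from the good term on the left. Extending $(\phi u)^2$ by zero and using $\dot H^1(\R^3)\subset L^6(\R^3)$ together with $\nabla((\phi u)^2)=2\phi^2u\nabla u+2\phi u^2\nabla\phi$ and $|\nabla\phi|\le\eb\phi$ yields
\[
\|\phi u\|_{L^{12}(\Omega)}^4=\|(\phi u)^2\|_{L^6(\Omega)}^2\le C\int_\Omega\phi^4u^2|\nabla u|^2\,dx+C\eb^2\|\phi u\|_{L^4(\Omega)}^4,
\]
so a bound on $\int_\Omega\phi^4u^2|\nabla u|^2\,dx$ gives the required control of $\|\phi u\|_{L^{12}}^4$. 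By Theorems~\ref{th wsp1}--\ref{th wsp2} the quantities $\|\phi u\|_{L^4}^4$, $\|\phi^2u\|_{L^2}^2$, $\|\phi u\|_{L^6}^3$, $\|\phi u\|_{H^1}$ and $\|\phi\Dt u\|_{L^2}$ are all majorized by $Q_\eb(\|\xi_u\|_{\E_b})$ via the dissipative estimate~\eqref{est en}. This at once disposes of several terms of $R$: $\bigl|4\int_\Omega\phi^3u^3\nabla\phi\cdot\nabla u\,dx\bigr|\le4\eb\int_\Omega\phi^4|u|^3|\nabla u|\,dx\le\frac12\int_\Omega\phi^4u^2|\nabla u|^2\,dx+C\eb^2\|\phi u\|_{L^4}^4$ (first half absorbed on the left), $(g,\phi^4u^3)\le\|\phi g\|_{L^2}\|\phi u\|_{L^6}^3$ is energy-controlled, and assumption~\eqref{f(s)s>-M} gives $(f(u)u,\phi^4u^2)\ge-M\|\phi^2u\|_{L^2}^2$.

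The genuinely dangerous terms are $3(\phi^4u^2,(\Dt u)^2)$ and $3\gamma((-\Dx+1)^{1/2}u,\phi^4u^2\Dt u)$, and they are handled using the extra smoothness $\phi(-\Dx+1)^{1/4}\Dt u\in L^2$ of the fractionally damped wave equation encoded in~\eqref{4.0}/\eqref{est en}. Splitting $\phi^4=\phi\cdot\phi\cdot\phi^2$ and applying H\"older with exponents $(3,\tfrac32)$ and $(2,3,6)$,
\[
3(\phi^4u^2,(\Dt u)^2)\le C\|\phi u\|_{L^6}^2\|\phi\Dt u\|_{L^3}^2,\qquad 3\gamma((-\Dx+1)^{1/2}u,\phi^4u^2\Dt u)\le C\|\phi(-\Dx+1)^{1/2}u\|_{L^2}\|\phi\Dt u\|_{L^3}\|\phi u\|_{L^{12}}^2,
\]
after which I would invoke the embeddings $H^1(\Omega)\subset L^6(\Omega)$ and $\Cal H^{1/2}(\Omega)\subset L^3(\Omega)$ together with the commutator equivalences of Corollary~\ref{cor equiv n1} — at $\theta=\tfrac14$ to pass from $\|\phi\Dt u\|_{L^3}$ to $C\|\phi(-\Dx+1)^{1/4}\Dt u\|_{L^2}$, and its extension to $\theta=\tfrac12$ to pass from $\|\phi(-\Dx+1)^{1/2}u\|_{L^2}$ to $C\|\phi u\|_{H^1}$. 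Both right-hand sides are then dominated by $C\|\phi u\|_{H^1}\|\phi(-\Dx+1)^{1/4}\Dt u\|_{L^2}$ times, in the second case, the extra factor $\|\phi u\|_{L^{12}}^2$; a final Young's inequality converts the second estimate into $\delta\|\phi u\|_{L^{12}}^4+C_\delta\|\phi u\|_{H^1}^2\|\phi(-\Dx+1)^{1/4}\Dt u\|_{L^2}^2$, and choosing $\delta$ small (independently of $\eb$) the term $\delta\|\phi u\|_{L^{12}}^4$ is absorbed, through the displayed inequality for $\|\phi u\|_{L^{12}}^4$, into $\int_\Omega\phi^4u^2|\nabla u|^2\,dx$ on the left.

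Collecting everything, one obtains for some $c_0>0$ an inequality of the form $\frac{d}{dt}\left[(\Dt u,\phi^4u^3)+\gamma((-\Dx+1)^{1/2}u,\phi^4u^3)\right]+c_0\int_\Omega\phi^4u^2|\nabla u|^2\,dx\le C\|\phi u\|_{H^1}^2\|\phi(-\Dx+1)^{1/4}\Dt u\|_{L^2}^2+Q_\eb(\|\xi_u\|_{\E_b})+Q_\eb(\|g\|_{L^2_b(\Omega)})$. Integrating over $[\max\{0,t-1\},t]$, bounding the endpoint values of $(\Dt u,\phi^4u^3)+\gamma((-\Dx+1)^{1/2}u,\phi^4u^3)$ by $Q_\eb(\|\xi_u\|_{\E_b})$ (H\"older and Corollary~\ref{cor equiv n1}), bounding $\int_{\max\{0,t-1\}}^t\|\phi(-\Dx+1)^{1/4}\Dt u\|_{L^2}^2\,ds$ by the left-hand side of~\eqref{est en}, and using~\eqref{est en} again, yields the required bound for $\int_{\max\{0,t-1\}}^t\int_\Omega\phi^4u^2|\nabla u|^2\,dx\,ds$; inserting this (and the already controlled $\|\phi u\|_{L^4}^4$) into the displayed inequality for $\|\phi u\|_{L^{12}}^4$ and taking $\sup_{x_0\in\Omega}$ gives~\eqref{est adreg1}, hence $u\in L^4([0,T];L^{12}_{\{\eb,x_0\}}(\Omega))$. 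Throughout, the smallness of $\eb$ is precisely what renders the commutator estimates of Section~\ref{s.cest} and the basic dissipative estimate~\eqref{est en} available, which is also why the constants, and $Q_\eb$, depend on $\eb$; and, as stressed above, the real work lies in making the opening multiplication by $\phi_{\eb,x_0}^4u^3$ rigorous rather than in the estimates themselves.
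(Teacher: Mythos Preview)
Your proposal is correct and takes essentially the same approach as the paper: multiply by $\phi_{\eb,x_0}^4 u^3$, extract $\|\phi_{\eb,x_0}u\|_{L^{12}}^4$ from the gradient term via $H^1\subset L^6$ applied to $(\phi u)^2$, control the two dangerous terms by H\"older $(2,3,6)$ together with the weighted embedding $\Cal H^{1/2}_{\{\eb,x_0\}}\subset L^3_{\{\eb,x_0\}}$, absorb via Young, and integrate over $[\max\{0,t-1\},t]$ using \eqref{est en}. The only difference is presentational: the paper makes the justification step explicit from the outset by replacing $u^3$ with the concrete $C^1$ truncation $\psi_n(r)$ (equal to $r^3$ on $[-n,n]$, linear outside) and an auxiliary $\Psi_n$ with $\psi_n'=|\Psi_n'|^2$, carries all estimates uniformly in $n$, and passes to the limit by Fatou at the very end --- precisely the ``bounded Lipschitz truncation $\beta_N(u)$ and letting $N\to\infty$ with Fatou's lemma'' option you mention but do not write out.
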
 
\begin{proof}

We would like to adapt and use the idea presented in Theorem \ref{th adreg1.bdd} in our infinite-energy case as well as make it rigorous. To this end we would like to multiply equation \eqref{eq main} by $\phi^4_{\eb,x_0}(x)u^3(x)$. The main technical problem in justifying this multiplication is the fact that $\int_\Omega f(u(t))u^3(t)\phi^4_{\eb,x_0}(x)dx$ may be infinite since at the moment we know that $u\in L^\infty([0,T];L^6_b(\Omega))$ only and $f(u)u^3$ is of order $u^8$ due to $\eqref{f.growth}$. Therefore to keep things rigorous we will approximate $r^3$ by $C^1(\R)$ function $\psi_n(r)$ given by the formula
\begin{equation}
\label{4.1}
\psi_n(r)=
\begin{cases}
r^3,\quad |r|\leq n,\\
3n^2r-2n^3,\quad r\geq n,\\
3n^2r+2n^3,\quad r\leq -n.
\end{cases}
\end{equation}  
Obviously, $\psi_n(r)$ is such that $|\psi_n(r)|\leq C_n |r|$ when $C_n$ is large enough and consequently the product $(f(u),\phi^4_{\eb,x_0}(x)\psi_n(u))$ makes sense. We also notice that $\psi_n(u)\in H^1_b(\Omega)$, this implies that product  $(-\Dx u,\phi^4_{\eb,x_0}\psi_n(u))$ is finite. And from the formula for $\Dt^2 u$ given by the equation \eqref{eq main} we conclude that product $(\Dt^2 u,\phi^4_{\eb,x_0}u)$ is also finite and thus desired multiplication makes sense and leads to 
\begin{multline}
\label{4.2}
\frac{d}{dt}\left((\phi_{\eb,x_0}\Dt u,\phi^3_{\eb,x_0}\psi_n(u))+\gamma(\phi_{\eb,x_0}(-\Dx+1)^\frac{1}{2}u,\phi^3_{\eb,x_0}\psi_n(u))\right)+\\
(\phi^2_{\eb,x_0}|\nabla u|^2,\phi^2_{\eb,x_0}\psi_n'(u))+\frac{\lambda_0}{2}(u,\psi_n(u)\phi^4_{\eb,x_0})+(\frac{\lambda_0}{2}u+f(u),\psi_n(u)\phi^4_{\eb,x_0})=\\
(\phi^2_{\eb,x_0}|\Dt u|^2,\phi^2_{\eb,x_0}\psi_n'(u))-4(\nabla u, \phi^3_{\eb,x_0}\nabla \phi_{\eb,x_0}\psi_n(u))+\\
\gamma (\phi_{\eb,x_0}(-\Dx+1)^\frac{1}{2}u,\phi^2_{\eb.x_0}\psi_n'(u)\phi_{\eb,x_0}\Dt u)+(g,\phi^4_{\eb,x_0}\psi_n(u)):=
A_1+A_2+A_3+A_4,
\end{multline}
where $(\cdot,\cdot)$ denotes scalar product in $L^2(\Omega)$.

Let us estimate each $A_i$ separately. $A_1$-term can be estimated just using Holder inequality with exponents $\frac{3}{2}$ and $3$
\begin{multline}
\label{A1}
A_1\leq \|\Dt u\|^2_{\{\eb,x_0\},0,3}\(\int_\Omega\phi^6_{{\eb},x_0}(\psi_n'(u))^3dx\)^\frac{1}{3}=\\
\|\Dt u\|^2_{\{\eb,x_0\},0,3}\(27\int_{\{x:|u(t)|\leq n\}}\phi^6_{{\eb},x_0}u^6dx+27\int_{\{x: |u(t)|\geq n\}}\phi^6_{\eb,x_0}n^6dx\)^\frac{1}{3}\leq
3\|\Dt u\|^2_{\{\eb,x_0\},0,3}\|u\|^2_{\{\eb,x_0\},0,6}.
\end{multline}

$A_2$-term can be estimated just by Cauchy inequality and the fact that $|\nabla \phi_{\eb,x_0}|\leq \eb \phi_{\eb,x_0}$
\begin{equation}
\label{A2}
|A_2|\leq 4\eb\|\nabla u\|_{\{\eb,x_0\},0,2}\|\phi^3_{\eb,x_0}\psi_n(u)\|_{L^2(\Omega)}\leq 16\eb \|\nabla u\|_{\{\eb,x_0\},0,2}\|u\|^3_{\{\eb,x_0\},0,6}.
\end{equation}

With $A_3$-term we should be more careful. Using Holder inequality with exponents $2$, $6$, and $3$, continuous embedding $\Cal H^\frac{1}{2}_{\{\eb,x_0\}}(\Omega)\subset L^3_{\{\eb,x_0\}}(\Omega)$ we derive 
\begin{multline}
\label{4.3}
A_3\leq \gamma\|u\|_{\{\eb,x_0\},1,2}\|\phi^2_{\eb,x_0}\psi'_n(u)\|_{\{\eb,x_0\},0,6}\|\phi_{\eb,x_0}(-\Dx+1)^\frac{1}{4}\Dt u\|_{L^2(\Omega)}\leq\\
 k\|\phi^2_{\eb,x_0}\psi'_n(u)\|^2_{\{\eb,x_0\},0,6}+C_k\|\phi_{\eb,x_0}(-\Dx+1)^\frac{1}{4}\Dt u\|^2_{L^2(\Omega)}\|u\|^2_{\{\eb,x_0\},1,2},
\end{multline} 
where $k$ will be fixed small enough below.

Further, it is convenient to use auxiliary function $\Psi_n(r)$ such that 
\begin{align}
\label{4.4}
\psi_n'(r)=|\Psi_n'(r)|^2,\\
\label{4.5}
|\psi_n'(r)|\leq C\Psi_n(r),
\end{align} 
for some absolute constant $C$. It is easy to see that $\Psi_n(r)$ can be chosen as follows
\begin{equation}
\label{Psi}
\Psi_n(r)=\begin{cases}
\frac{\sqrt{3}}{2}r^2,\quad &|r|\leq n,\\
\sqrt{3}n|r|-\frac{\sqrt{3}}{2}n^2,\quad &|r|\geq n.
\end{cases}
\end{equation}
Then from \eqref{4.3} and \eqref{4.5} we find
\begin{equation}
\label{A3}
A_3\leq kC\|\phi^2_{\eb,x_0}\Psi_n(u)\|^2_{\{\eb,x_0\},0,6}+C_k\|\phi_{\eb,x_0}(-\Dx+1)^\frac{1}{4}\Dt u\|^2_{L^2(\Omega)}\|u\|^2_{\{\eb,x_0\},1,2},
\end{equation}
where $k$ is small enough to be chosen below, and is an absolute constant.

On the other hand, due to \eqref{4.4} and continuous embedding $H^1_0(\Omega)\subset L^6(\Omega)$ we have
\begin{multline}
\|\phi^2_{\eb,x_0}\Psi_n(u)\|^2_{L^6(\Omega)}\leq C\(\|\phi^2_{\eb,x_0}\Psi_n(u)\|^2_{L^2(\Omega)}+\|\nabla(\phi^2_{\eb,x_0}\Psi_n(u))\|^2_{L^2(\Omega)}\)\leq\\
C\(\|\phi^2_{\eb,x_0}\Psi_n(u)\|^2_{L^2(\Omega)}+(\phi^4_{\eb,x_0}|\nabla u|^2, \psi_n'(u))\).
\end{multline} 
And using the fact that $\Psi_n^2(r)\leq \psi_n(r)r$ we proceed the previous inequality as follows
\begin{equation}
\|\phi^2_{\eb,x_0}\Psi_n(u)\|^2_{L^6(\Omega)}\leq C\(\frac{\lambda_0}{2}(\phi^4_{\eb,x_0}\psi_n(u),u)+(\phi^4_{\eb,x_0}|\nabla u|^2, \psi_n'(u))\),
\end{equation}
that is
\begin{equation}
\label{4.6}
\frac{\lambda_0}{2}(\phi^4_{\eb,x_0}\psi_n(u),u)+(\phi^4_{\eb,x_0}|\nabla u|^2, \psi_n'(u))\geq \mu \|\phi^2_{\eb,x_0}\Psi_n(u)\|^2_{L^6(\Omega)},
\end{equation}
for some constant $\mu >0$.

Estimate of $A_4$-term is trivial
\begin{equation}
\label{A4}
A_4\leq C\|g\|_{\{\eb,x_0\},0,2}\|u\|^3_{\{\eb,x_0\},0,6}\leq C(\|g\|^2_{\{\eb,x_0\},0,2}+\|u\|^6_{\{\eb,x_0\},0,6}),
\end{equation}
with some absolute constant $C>0$.

So it remains to estimate the term involving $f(u)$ on the left hand side of \eqref{3.2}. Noticing that $\(\frac{\lambda_0}{2}u+f(u)\)\psi_n(u)\geq 0$, when $|u|>n$ ($n$ is large enough) and using \eqref{f(s)s>-M} we deduce
\begin{equation}
\label{4.7}
\int_\Omega\phi^4_{\eb,x_0}\(\frac{\lambda_0}{2}u+f(u)\)\psi_n(u)dx\geq -M\|u\|^2_{\{\eb,x_0\},0,2}.
\end{equation}

Combining \eqref{4.2}, \eqref{A1}, \eqref{A2}, \eqref{A3} (with small $k$), \eqref{4.6}, \eqref{A4} we conclude
\begin{multline}\label{4.8}
\frac{d}{dt}\left((\phi_{\eb,x_0}\Dt u,\phi^3_{\eb,x_0}\psi_n(u))+\gamma(\phi_{\eb,x_0}(-\Dx+1)^\frac{1}{2}u,\phi^3_{\eb,x_0}\psi_n(u))\right)+\beta \|\phi^2_{\eb,x_0}\Psi_n(u)\|^2_{L^6(\Omega)} \leq\\
C\(\|u\|^2_{\{\eb,x_0\},1,2}+\|u\|^4_{\{\eb,x_0\},1,2}+\|u\|^6_{\{\eb,x_0\},1,2}+\|\phi_{\eb,x_0}(-\Dx+1)^\frac{1}{4}\Dt u\|^2_{L^2(\Omega)}\|u\|^2_{\{\eb,x_0\},1,2} +\|g\|^2_{\{\eb,x_0\},0,2}\).
\end{multline}

Eventually, integration of the above inequality from $\max\{0,t-1\}$ to $t$ and dissipative estimate \eqref{est en} together with Corollary \ref{cor wsp4} yields
\begin{equation}
\int_{max\{0,t-1\}}^t\|\phi^2_{\eb,x_0}\Psi_n(u(s))\|^2_{L^6(\Omega)}ds\leq Q_\eb(\|\xi_0\|_{\E_b})e^{-\beta t}+Q_\eb(\|g\|_{L^2_b(\Omega)}),\quad t\geq 0, 
\end{equation}
\emph{uniformly} with respect to $n$ and $x_0$ for some monotone increasing function $Q_\eb$, constant $\beta>0$. Passing to the limit as $n$ goes to infinity in the above estimate, for instance by Fatou's lemma, and taking supremum in $x_0$ we complete the proof.
\end{proof}

It will be shown in Theorem \ref{th adreg2} that with estimate \eqref{est adreg1} in hands it is not difficult to check that actually $u\in L^2([0,T];\Cal H^\frac{3}{2}_{\{\eb,x_0\}}(\Omega))$.  
\section{Uniqueness of infinite-energy solutions}
\label{s.u}

We are ready to prove uniqueness of infinite-energy solutions. We emphasize that uniqueness, at least in quintic case, is essentially based on additional regularity of energy solutions obtained in Theorem \ref{th adreg1}. Also to prove the uniqueness, in addition to \eqref{E phi}-\eqref{E b}, we need to work in the energy space $\Elr$ with compactly supported weight $\psi_{x_0}$ (see \eqref{psi})
\begin{align}
\label{E psi}
& \Elr =\(H^1_{\psi_{x_0}}(\Omega)\cap\{u|_{\partial\Omega}=0\}\)\times L^2_{\psi_{x_0}}(\Omega),\\
& \xi=(\xi_1,\xi_2)\in \Elr,\ \|\xi\|^2_{\Elr}=\|\psi_{x_0}\nabla \xi_1\|^2_{L^2(\Omega)}+\lambda_0\|\psi_{x_0}\xi_1\|^2_{L^2(\Omega)}+\|\psi_{x_0}\xi_2\|^2_{L^2(\Omega)}.
\end{align}
\begin{theorem}
\label{th uniq cont}
Let assumptions of Theorem \ref{th ex} be satisfied, and $u_1$ and $u_2$ be two infinite-energy solutions of problem \eqref{eq main} with initial data $\xi^1_0,\ \xi^2_0\in\E_b$ respectively. Then, for arbitrary $T>0$, the following estimate holds
\begin{equation}
\label{cont.dep.id}
\|\xi_{u_1}(t)-\xi_{u_2}(t)\|^2_{\E_b}\leq Q(A_T)\|\xi^1_0-\xi^2_0\|^2_{\E_b}e^{Q(A_T)t},\ t\in[0,T],
\end{equation}
where
\begin{equation}\label{AT}
A_T=\sup_{x_0\in\Omega}exp\left\{C\int_0^T \(1+\|u_1(\tau)\|^4_{L^{12}(\Omega\cap B^2_{x_0})}+\|u_2(\tau)\|^4_{L^{12}(\Omega\cap B^2_{x_0})}\)d\tau\right\},
\end{equation}
for some monotone increasing function $Q$ and absolute constant $C$ which do not depend on $u_1$, $u_2$ and $t$. In particular, infinite-energy solution is unique and depends continuously on initial data.   
\end{theorem}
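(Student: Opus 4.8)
The plan is to derive an energy-type estimate for the difference $v := u_1 - u_2$ localised by the compactly supported weight $\psi_{x_0}$, and then to pass from the weighted estimate with $\psi_{x_0}$ to the uniformly local estimate in $\E_b$ using Corollary \ref{cor wsp4}. Subtracting the equations \eqref{eq main} for $u_1$ and $u_2$, the function $v$ solves
\begin{equation}
\Dt^2 v+\gamma(-\Dx+1)^\frac{1}{2}\Dt v-\Dx v+\lambda_0 v = -\big(f(u_1)-f(u_2)\big),\qquad v|_{\d\Omega}=0,
\end{equation}
with initial data $\xi_v|_{t=0}=\xi^1_0-\xi^2_0$. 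First I would multiply this equation by $\psi_{x_0}^2\Dt v$ and integrate over $\Omega$. The damping term contributes a good dissipative quantity $\gamma\|\psi_{x_0}(-\Dx+1)^\frac14\Dt v\|^2_{L^2}$ up to a commutator error which, by Proposition \ref{prop comest3} (with $\theta=\frac14$), is bounded by $C\|\phi_{\eb,x_0}\Dt v\|_{L^2}$ times the square root of the good term, hence absorbable for $\eb$ small; the terms coming from $-\Dx v$ and $\lambda_0 v$ produce $\tfrac12\tfrac{d}{dt}\|\xi_v\|^2_{\Elr}$ plus a lower-order commutator term $\sim\int \nabla\psi_{x_0}\cdot\nabla v\,\psi_{x_0}\Dt v$, controlled by $C\|\xi_v\|^2_{\Elr}+C\|\phi_{\eb,x_0}\Dt v\|^2_{L^2}$ since $|\nabla\psi_{x_0}|\le C\phi_{\eb,x_0}$ on its support.

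The crux is the nonlinear term $-(f(u_1)-f(u_2),\psi_{x_0}^2\Dt v)$. Writing $f(u_1)-f(u_2)=\big(\int_0^1 f'(su_1+(1-s)u_2)\,ds\big)v$ and using the growth bound \eqref{f.growth}, $|f'(s)|\le C(1+|s|^4)$, one gets pointwise $|f(u_1)-f(u_2)|\le C(1+|u_1|^4+|u_2|^4)|v|$. Then by Hölder with exponents tuned so that the quartic factor sits in $L^{12}/4=L^3$,
\begin{equation}
\big|(f(u_1)-f(u_2),\psi_{x_0}^2\Dt v)\big|\le C\big(1+\|u_1\|^4_{L^{12}(\Omega\cap B^2_{x_0})}+\|u_2\|^4_{L^{12}(\Omega\cap B^2_{x_0})}\big)\|\psi_{x_0}v\|_{L^6}\|\psi_{x_0}\Dt v\|_{L^2},
\end{equation}
where $\|\psi_{x_0}v\|_{L^6}\le C\|\xi_v\|_{\Elr}$ by Sobolev embedding $H^1_0\subset L^6$ and $|\nabla\psi_{x_0}|\le C\phi_{\eb,x_0}$. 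Splitting $\|\psi_{x_0}\Dt v\|_{L^2}\le\|\psi_{x_0}(-\Dx+1)^\frac14\Dt v\|_{L^2}$ and applying Young's inequality, the $\Dt v$ factor is absorbed into the good damping term at the cost of a prefactor $m_{x_0}(t):=C(1+\|u_1(t)\|^4_{L^{12}(\Omega\cap B^2_{x_0})}+\|u_2(t)\|^4_{L^{12}(\Omega\cap B^2_{x_0})})$ multiplying $\|\xi_v\|^2_{\Elr}$; note $m_{x_0}\in L^1([0,T])$ precisely by Theorem \ref{th adreg1} (estimate \eqref{est adreg1}), which supplies the finiteness of $A_T$.

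Collecting all terms yields the differential inequality $\frac{d}{dt}\|\xi_v(t)\|^2_{\Elr}\le \big(C+m_{x_0}(t)\big)\|\xi_v(t)\|^2_{\Elr}$, whence by Gronwall
\begin{equation}
\|\xi_v(t)\|^2_{\Elr}\le \|\xi^1_0-\xi^2_0\|^2_{\Elr}\exp\Big(Ct+\int_0^t m_{x_0}(\tau)\,d\tau\Big)\le Q(A_T)\,\|\xi^1_0-\xi^2_0\|^2_{\E_b}\,e^{Ct},
\end{equation}
using $\|\xi^1_0-\xi^2_0\|_{\Elr}\le C\|\xi^1_0-\xi^2_0\|_{\E_b}$ (the compactly supported weight is dominated by any $\phi_{\eb,x_0}$ up to a constant, then the $\E_b$ norm dominates the sup over $x_0$). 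Finally I would multiply through by $\phi_{\eb,x_0}^2$... more precisely, take the supremum over $x_0\in\Omega$ of the resulting $\Elr$ bound: since the family $\{\psi_{x_0}\}$ gives, via Corollary \ref{cor wsp4}, an equivalent norm for $\E_b$ when one takes $\sup_{x_0}$, and since $A_T$ is defined with a $\sup_{x_0}$ already built in, one obtains \eqref{cont.dep.id}. The main obstacle is the bookkeeping around the nonlinear term: one must verify that the Hölder exponents genuinely close (the $L^{12}$ regularity is exactly what is needed, which is why Theorem \ref{th adreg1} is invoked), and that all commutator errors from working with $\psi_{x_0}$ and $(-\Dx+1)^\frac12$ are either absorbed by the damping term or are lower-order in $\|\xi_v\|_{\Elr}$; the $\eb$-smallness from Proposition \ref{prop comest3} is what makes this possible.
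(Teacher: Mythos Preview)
Your overall strategy matches the paper's: multiply the difference equation by $\psi_{x_0}^2\Dt v$, control the nonlinear term via H\"older with the $L^{12}$ regularity from Theorem~\ref{th adreg1}, apply Gronwall, and pass to $\E_b$. However, there is a genuine gap in how you close the estimate at fixed $x_0$.

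The commutator bound of Proposition~\ref{prop comest3} carries \emph{no} $\eb$-smallness factor (unlike Proposition~\ref{prop comest1}); the constant there is absolute. Moreover, the commutator errors come back in terms of the \emph{full-support} weight $\phi_{\eb,x_0}$, not $\psi_{x_0}$: one of the two commutator pieces in the damping term is bounded by $C\|\phi_{\eb,x_0}(-\Dx+1)^{1/4}\Dt v\|_{L^2}$, and the lower-order terms from $-\Dx v$ and from the Sobolev embedding produce $\|\nabla\psi_{x_0}\cdot v\|_{L^2}^2$ and $\|\phi_{\eb,x_0}\Dt v\|_{L^2}^2$. None of these is controlled by $\|\xi_v\|_{\Elr}^2$, since $\psi_{x_0}$ vanishes outside $B^2_{x_0}$ while $\phi_{\eb,x_0}$ does not. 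Hence the clean differential inequality $\frac{d}{dt}\|\xi_v\|^2_{\Elr}\le (C+m_{x_0})\|\xi_v\|^2_{\Elr}$ is not attainable, and ``taking $\sup_{x_0}$'' of the Gronwall output does not dispose of these leftover $\phi_{\eb,x_0}$-weighted terms.

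The paper closes this gap in two steps. First, it introduces the modified energy $E_{x_0}(\xi_v)=\|\nabla\psi_{x_0}\cdot v\|_{L^2}^2+\|\xi_v\|_{\Elr}^2$ to absorb the Sobolev remainder, and keeps the bad terms $\delta\|\phi_{\eb,x_0}(-\Dx+1)^{1/4}\Dt v\|_{L^2}^2$ and $C_\delta\|\xi_v\|_{\Ee}^2$ on the right through the first Gronwall (inequality~\eqref{4.12}). Second, it multiplies \eqref{4.12} by $\phi_{\mu,x_0}^2$ with $0<\mu<\eb$ and \emph{integrates over $x_0$}, invoking Theorem~\ref{th wsp2} on the left and Theorem~\ref{th wsp1} on the right; only at this stage do the $\phi_{\eb,x_0}$-weighted terms become comparable to the good dissipative term, and $\delta=\delta(A_T)$ is then chosen small enough to absorb them, yielding a $\phi_{\mu,x_0}$-weighted inequality to which a second Gronwall applies. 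Your sketch conflates this integration-in-$x_0$ step with a mere supremum, and that is precisely where the argument would fail.
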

\begin{proof}
As usual let us consider $v=u_1-u_2$ which solves the problem
\begin{equation}
\label{eq u1-u2}
\begin{cases}
\Dt^2 v+\gamma(-\Dx+1)^\frac{1}{2}\Dt v-\Dx v+\lambda_0 v = f(u_2)-f(u_1),\quad x\in\Omega,\\
v|_{\d\Omega}=0,\  \xi_v|_{t=0}=\xi^1_0-\xi^2_0.
\end{cases}
\end{equation}
To get the desired estimate we need to multiply equation \eqref{eq u1-u2} by $\psi^2_{x_0}\Dt v$. We notice that now due to extra regularity given by Theorem \ref{th adreg1} and interpolation $[L^\infty([0,T];H^1_{\{\eb,x_0\}}(\Omega)), L^4([0,T];L^{12}_{\{\eb,x_0\}}(\Omega)]_\frac{4}{5}=L^5([0,T];L^{10}_{\{\eb,x_0\}}(\Omega))$ we know that $u_i\in L^5([0,T];L^{10}_{\{\eb,x_0\}}(\Omega))$ and hence $f(u_i)\in L^1([0,T];L^2_{\{\eb,x_0\}}(\Omega))$. And since $\Dt u\in L^2([0,T];H^\frac{1}{2}_{\{\eb,x_0\}}(\Omega))$ we have $(-\Dx+1)^\frac{1}{2}\Dt u\in L^2([0,T];H^{-\frac{1}{2}}_{\{\eb,x_0\}}(\Omega)) $ and thus the multiplication of equation \eqref{eq u1-u2} by $\psi^2_{x_0}\Dt v$ makes sense and yields 
\begin{multline}
\label{4.9}
\frac{1}{2}\frac{d}{dt}\|\xi_v\|^2_{\Elr}+\gamma((-\Dx+1)^\frac{1}{2}\Dt v,\psi^2_{x_0}\Dt v)=\\
-2(\nabla v,\psi_{x_0}\nabla\psi_{x_0}\Dt v)+(f(u_2)-f(u_1),\psi^2_{x_0}\Dt v).
\end{multline}
The term involving non-linearity on the right hand side of \eqref{4.9} can be estimated based on growth assumption on $f$ \eqref{f.growth}, Holder inequality and the fact that \emph{$\psi_{x_0}$ is compactly supported} (exactly at this place we have to use $\psi_{x_0}$ instead of $\phi_{\eb,x_0}$)
\begin{multline}
\label{est nl}
(f(u_2)-f(u_1),\psi^2_{x_0}\Dt v)\leq C\int_{\Omega}(1+|u_1|^4+|u_2|^4)\psi^2_{x_0}v\,\Dt v\,dx\leq\\
 C\(1+\|u_1\|^4_{L^{12}(\Omega\cap B^2_{x_0})}+\|u_2\|^4_{L^{12}(\Omega\cap B^2_{x_0})}\)\|\psi_{x_0}v\|_{L^6(\Omega)}\|\psi_{x_0}\Dt v\|_{L^2(\Omega)}\leq\\
 C\(1+\|u_1\|^4_{L^{12}(\Omega\cap B^2_{x_0})}+\|u_2\|^4_{L^{12}(\Omega\cap B^2_{x_0})}\)\(\|\nabla(\psi_{x_0}v)\|^2_{L^2(\Omega)}+\|\psi_{x_0}\Dt v\|^2_{L^2(\Omega)}\)\leq \\
 C\(1+\|u_1\|^4_{L^{12}(\Omega\cap B^2_{x_0})}+\|u_2\|^4_{L^{12}(\Omega\cap B^2_{x_0})}\)\(\|\nabla\psi_{x_0}v\|^2_{L^2(\Omega)}+\|\xi_v\|^2_{\Elr}\),
\end{multline} 
with some absolute constant $C$. To be able to use Gronwall's inequality further we absorb the extra term $\|\nabla\psi_{x_0}v\|^2$ by introducing modified energy
\begin{equation}
\label{Em}
E_{x_0}(\xi_v)=\|\nabla\psi_{x_0}v\|^2_{L^2(\Omega)}+\|\xi_v\|^2_{\Elr},
\end{equation}
and rewrite \eqref{4.9} in terms of $E_{x_0}(\xi_v)$
\begin{multline}
\label{4.9.01}
\frac{1}{2}\frac{d}{dt}E_{x_0}(\xi_v)+\gamma((-\Dx+1)^\frac{1}{2}\Dt v,\psi^2_{x_0}\Dt v)\leq\\
C\(1+\|u_1\|^4_{L^{12}(\Omega\cap B^2_{x_0})}+\|u_2\|^4_{L^{12}(\Omega\cap B^2_{x_0})}\)E_{x_0}(\xi_v)-2(\nabla v,\psi_{x_0}\nabla\psi_{x_0}\Dt v)+2(|\nabla\psi_{x_0}|^2v,\Dt v).
\end{multline}
The term with fractional laplacian can be rewritten as follows
\begin{multline}
\label{4.91}
((-\Dx+1)^\frac{1}{2}\Dt v,\psi^2_{x_0}\Dt v)=
\gamma(\left[\psi_{x_0},(-\Dx+1)^\frac{1}{4}\right](-\Dx+1)^\frac{1}{4}\Dt v,\psi_{x_0}\Dt v)\\
+\gamma(\psi_{x_0}(-\Dx+1)^\frac{1}{4}\Dt v,\left[(-\Dx+1)^\frac{1}{4},\psi_{x_0}\right]\Dt v)+
\|\psi_{x_0}(-\Dx+1)^\frac{1}{4}\Dt v\|^2_{L^2(\Omega)},
\end{multline}
that together with Proposition \ref{prop comest3} (for $\eb$ small enough) gives
\begin{multline}
\label{4.10}
\gamma((-\Dx+1)^\frac{1}{2}\Dt v,\psi^2_{x_0}\Dt v)\geq \gamma\|(-\Dx+1)^\frac{1}{4}\Dt v\|^2_{L^2(\Omega\cap B^1_{x_0})}\\-\frac{\delta}{2} \|\phi_{\eb,x_0}(-\Dx+1)^\frac{1}{4}\Dt v\|^2_{L^2(\Omega)}-C_\delta\|\Dt v(s)\|^2_{\{\eb,x_0\},0,2}.
\end{multline}
for arbitrary $\delta > 0$ which will be chosen small below.

From \eqref{4.9.01}, \eqref{4.10} and obvious estimates one derives 
\begin{multline}
\label{4.11}
\frac{d}{dt}E_{x_0}(\xi_v)+2\gamma\|(-\Dx+1)^\frac{1}{4}\Dt v\|^2_{L^2(\Omega\cap B^1_{x_0})}\leq\\
C\left(1+\|u_1\|^4_{L^{12}(\Omega\cap B^2_{x_0})}+\|u_2\|^4_{L^{12}(\Omega\cap B^2_{x_0})}\right)E_{x_0}(\xi_v)+\\
\delta \|\phi_{\eb,x_0}(-\Dx+1)^\frac{1}{4}\Dt v\|^2_{L^2(\Omega)}+C_\delta\|\xi_v\|^2_{\Ee}, {\rm\ for\ all\ }t\geq 0.
\end{multline}
Applying Gronwall's lemma to \eqref{4.11} we obtain
\begin{multline}\label{4.12}
E_{x_0}(\xi_v(t))+2\gamma\int_0^t\|(-\Dx+1)^\frac{1}{4}\Dt v(s)\|^2_{L^2(\Omega\cap B^1_{x_0})}ds\leq \\
A_T\(E_{x_0}(\xi_v(0))+\delta\int_0^t\|\phi_{\eb,x_0}(-\Dx+1)^\frac{1}{4}\Dt v(s)\|^2_{L^2(\Omega)}ds+C_\delta\int_0^t\|\xi_v(s)\|^2_{\Ee}ds\),\ t\in[0,T],
\end{multline}
where $A_T$ is given by \eqref{AT}.

Furthermore, multiplying \eqref{4.12} by $\phi^2_{\mu,x_0}$ with $0<\mu<\eb$, using Theorem \ref{th wsp2} on the left hand side of \eqref{4.12}, Theorem \ref{th wsp1} ($p=2$, $q=1$) on the right hand side of \eqref{4.12} and fixing $\delta =\delta(A_T)$ small enough we end up with
\begin{multline}\label{4.13}
\|\xi_v(t)\|^2_{\E_{\{\mu,x_0\}}}+\kappa\int_0^t\|\phi_{\mu,x_0}(-\Dx+1)^\frac{1}{4}\Dt v\|^2_{L^2(\Omega)}\leq\\ Q_{\eb,\mu}(A_T)\(\|\xi_v(0)\|^2_{\E_{\{\mu,x_0\}}}+\int_0^t\|\xi_v(s)\|^2_{\E_{\{\mu,x_0\}}}ds\),\ t\in[0,T],
\end{multline}
where constant $\kappa>0$ and $Q_{\eb,\mu}$ is some monotone increasing function. 
Applying Gronwall's lemma to \eqref{4.13} and taking supremum with respect to $x_0$ we finish the proof.
\end{proof}

\begin{rem}
We note that Remarks \ref{rem 5.1}-\ref{rem 5.3} hold true in the context of infinite-energy solutions if one substitutes $\E_b$ by $\E_{q,b}:=(H^1_b(\Omega)\cap L^{q+2}_b(\Omega)\cap\{u|_{\partial\Omega}=0\})\times L^2_b(\Omega)$.
\end{rem}

\section{Smoothing property of infinite-energy solutions}
\label{s.sm}
It is well known that the wave equation with fractional damping of the form $(-\Dx)^\theta\Dt u$ and $\theta\in(0,1)$ possesses smoothing property similar to parabolic equations (see \cite{KZ2009}, \cite{SZ2014}, \cite{S ADE} and references therein). Here we would like to establish smoothing property of problem \eqref{eq main} in the case of infinite-energy solutions. 

As usual, we start with the fact that infinite-energy solutions are smoother if initial data are smoother. Although the next Theorem gives the estimate which is divergent as $t\to+\infty$, this estimate will be improved at the end of the section.
\begin{theorem}
\label{th sm1}
Let assumptions of Theorem \ref{th ex} hold true and let initial data be such that
\begin{equation}
\xi_0\in \E^1_b := H^2_b(\Omega)\cap\{u|_{\d\Omega}=0\}\times H^1_b(\Omega)\cap\{u|_{\d\Omega}=0\}.
\end{equation}
Then $\xi_u(t)\in \E^1_b$ and $\xi_{\Dt u}(t)\in\E_b$ for all $t\geq 0$ and the following estimate holds
\begin{equation}
\label{est sm1}
\|\xi_u(t)\|^2_{\E^1_b}+\|\xi_{\Dt u}(t)\|^2_{\E_b}\leq Q_1(\|\xi_0\|_{\E^1_b},T)+Q_2(\|g\|_{b,0,2},T),\ t\in[0,T],
\end{equation}
for some monotone increasing functions $Q_1$ and $Q_2$ which are independent of $u$. 
\end{theorem}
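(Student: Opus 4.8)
\textbf{Proof proposal for Theorem \ref{th sm1}.}

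The plan is to differentiate the equation \eqref{eq main} formally with respect to $t$, set $w=\Dt u$, and derive an energy estimate for $w$ in the uniformly local space $\Ee$, treating the term $f'(u)\Dt u$ as a perturbation that is controlled thanks to the extra regularity $u\in L^4([0,T];L^{12}_{\{\eb,x_0\}}(\Omega))$ from Theorem \ref{th adreg1}. Formally $w$ solves $\Dt^2 w+\gamma(-\Dx+1)^{1/2}\Dt w-\Dx w+\lambda_0 w=-f'(u)w$, with $w|_{\d\Omega}=0$. Multiplying by $\phi^2_{\eb,x_0}\Dt w$ and integrating, the damping yields, after the commutator manipulations of Proposition \ref{prop comest3} exactly as in \eqref{4.91}--\eqref{4.10}, a good term $\gamma\|(-\Dx+1)^{1/4}\Dt w\|^2_{L^2(\Omega\cap B^1_{x_0})}$ modulo lower-order contributions. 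The right-hand side is estimated by
\begin{equation}
|(f'(u)w,\phi^2_{\eb,x_0}\Dt w)|\leq C(1+\|u\|^4_{L^{12}(\Omega\cap B^2_{x_0})})\|\phi_{\eb,x_0}w\|_{L^6(\Omega)}\|\phi_{\eb,x_0}\Dt w\|_{L^2(\Omega)},
\end{equation}
using \eqref{f.growth} and Hölder with exponents $6/5$, $6$, $2$; the factor $\|\phi_{\eb,x_0}w\|_{L^6(\Omega)}$ is dominated by $\|w\|_{\{\eb,x_0\},1,2}$, which is the energy norm of $\xi_w$ we are estimating. This produces a Gronwall inequality $\frac{d}{dt}\mathcal{E}_{x_0}(\xi_w)\leq C(1+\|u(t)\|^4_{L^{12}(\Omega\cap B^2_{x_0})})\mathcal{E}_{x_0}(\xi_w)+(\text{lower order in }\Ee)$ with an $L^1_t$ coefficient, and Theorem \ref{th adreg1} guarantees $\int_0^T\|u(\tau)\|^4_{L^{12}(\Omega\cap B^2_{x_0})}d\tau$ is finite uniformly in $x_0$, exactly as in the quantity $A_T$ of \eqref{AT}. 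Integrating and taking $\sup_{x_0}$ gives the bound on $\|\xi_{\Dt u}(t)\|_{\E_b}$, divergent in $T$ through the exponential of $A_T$.

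Next I would recover the $H^2_b\times H^1_b$ bound on $\xi_u$ itself. Since $w=\Dt u\in\E_b$ is now controlled, rewrite \eqref{eq main} as an elliptic equation for $u$ at fixed $t$: $-\Dx u+\lambda_0 u=g-f(u)-\Dt^2 u-\gamma(-\Dx+1)^{1/2}\Dt u$. The right-hand side lies in $L^2_b(\Omega)$ uniformly on $[0,T]$: indeed $g\in L^2_b$; $\Dt^2 u=\Dt w\in\Cal H^{-1}_b$ is not quite enough, so instead one uses $-\gamma(-\Dx+1)^{1/2}\Dt u-\Dt^2 u=-\Dt((-\Dx+1)^{1/2}\cdot)$ type bookkeeping, or more simply multiplies the $w$-equation by $\phi^2_{\eb,x_0}(-\Dx+1)^{1/2}w$ to get $\Dt u\in L^\infty([0,T];\Cal H^{1}_b)$ and hence $(-\Dx+1)^{1/2}\Dt u\in L^\infty([0,T];L^2_b)$ and $\Dt^2 u\in L^\infty([0,T];L^2_b)$; the term $f(u)\in L^\infty([0,T];L^2_b)$ because $u\in L^\infty([0,T];L^6_b)$ and $|f(u)|\leq C(1+|u|^5)$, so $f(u)$ is of order $u^5\in L^{6/5}_b$, which is not $L^2_b$ — here one must instead invoke the already-established $u\in L^4_tL^{12}_{\{\eb,x_0\}}$ together with $u\in L^\infty_tH^1_{\{\eb,x_0\}}$ and interpolate to $u\in L^\infty_t L^{10}_{\{\eb,x_0\}}$ on a.e.\ time slice after the smoothing kicks in, or simply accept the estimate in the weaker space $\Cal H^{3/2}_b$ from Theorem \ref{th adreg2} and bootstrap. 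Applying the weighted elliptic regularity (Theorem \ref{th wsp2}, Corollary \ref{cor wsp3} reducing to Sobolev estimates on the $V_{x_0}$ of \eqref{2.6}--\eqref{2.7}) then yields $u\in L^\infty([0,T];H^2_b(\Omega))$ with the stated bound, and $w|_{\d\Omega}=0$ with $w\in H^1_b$ follows from the energy estimate above.

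The main obstacle is the justification of the differentiation in $t$ and the associated multiplications: $\Dt u$ is only known a priori to be weakly continuous in $\Ee$ with $(-\Dx+1)^{1/2}\Dt u\in L^2_{t,loc}$ of the weighted space, so $\Dt^2 u$ and $\Dt^3 u$ are very weak, and the product $(f'(u)\Dt u,\phi^2_{\eb,x_0}\Dt^2 u)$ is formal. I would make this rigorous by the standard difference-quotient or Galerkin approximation argument: replace $w$ by $D_h u=(u(t+h)-u(t))/h$, derive the estimate uniformly in $h$ using that $v=u(t+h)-u(t)$ solves an equation of the type \eqref{eq u1-u2} with right-hand side $f(u(t))-f(u(t+h))$ controlled exactly as in \eqref{est nl} of Theorem \ref{th uniq cont}, and pass to the limit $h\to0$. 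A secondary technical point is the handling of the quintic term $f(u)$ when bootstrapping the elliptic estimate: one should feed in the $L^2([0,T];\Cal H^{3/2}_{\{\eb,x_0\}})$-regularity (Theorem \ref{th adreg2}) and the $L^\infty_t$-energy bound, interpolate to place $u$ in a space where $u^5\in L^2_{\{\eb,x_0\}}$ for a.e.\ $t$, and argue on a.e.\ time slice; the uniformity in $x_0$ is preserved throughout because all the weighted estimates used (Theorems \ref{th wsp1}, \ref{th wsp2}, Corollary \ref{cor wsp4}) are uniform in $x_0$. The divergence of the bound as $T\to\infty$ is expected and, as remarked in the text, will be removed later by combining with the dissipative estimate \eqref{est en}.
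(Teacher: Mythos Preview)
Your overall strategy for the first half --- differentiate in $t$, set $w=\Dt u$, and run a weighted energy estimate with the $L^{12}$-coefficient absorbed via Gronwall --- is the same as the paper's. But two points need correcting.

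\textbf{The multiplier must be $\psi^2_{x_0}\Dt w$, not $\phi^2_{\eb,x_0}\Dt w$.} Your own references (Proposition \ref{prop comest3}, display \eqref{4.91}--\eqref{4.10}, and the localized norms on $\Omega\cap B^1_{x_0}$, $\Omega\cap B^2_{x_0}$) all pertain to the compactly supported cutoff $\psi_{x_0}$, so this is likely a slip. It matters, though: with $\phi_{\eb,x_0}$ the H\"older step cannot produce the localized factor $\|u\|^4_{L^{12}(\Omega\cap B^2_{x_0})}$, because the two copies of $\phi_{\eb,x_0}$ must go onto $w$ and $\Dt w$ to build the energy, leaving $|u|^4$ \emph{unweighted} and hence possibly infinite. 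The paper remarks exactly this at \eqref{est nl} (``exactly at this place we have to use $\psi_{x_0}$ instead of $\phi_{\eb,x_0}$''). After fixing this, one multiplies by $\phi^2_{\mu,x_0}$, integrates in $x_0$, and uses Theorems \ref{th wsp1}--\ref{th wsp2} to close, as in \eqref{4.12}--\eqref{4.13}.

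\textbf{The elliptic bootstrap to $u\in H^2_b$ has a real gap, and the missing ingredient is another $u^3$--multiplication.} You correctly identify that $f(u)\in L^2_b$ needs $u\in L^{10}_b$, which $H^1_b$ alone does not give; the interpolation you suggest with Theorem \ref{th adreg2} only yields $u\in L^5_tL^{10}_{\{\eb,x_0\}}$, hence $f(u)\in L^1_tL^2_{\{\eb,x_0\}}$, not $L^\infty_t$, so the elliptic step does not produce a bound uniform in $t$. The paper closes this by reapplying the $u^3$--trick of Theorem \ref{th adreg1}, but now to the \emph{stationary} equation at each fixed $t$: write
\[
-\Dx u(t)+\lambda_0 u(t)+f(u(t))=g-\Dt^2 u(t)-\gamma(-\Dx+1)^{1/2}\Dt u(t)=:h(t)\in L^2_b(\Omega),
\]
which is legitimate once $\xi_{\Dt u}(t)\in\E_b$, and multiply by $\phi^4_{\eb,x_0}u^3$. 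The sign condition \eqref{f(s)s>-M} kills the nonlinear term (no growth estimate needed there), and one reads off
\[
\|u(t)\|^4_{\{\eb,x_0\},0,12}\leq C\big(1+\|u(t)\|^6_{\{\eb,x_0\},1,2}+\|h(t)\|^2_{\{\eb,x_0\},0,2}\big),
\]
uniformly in $x_0$ and \emph{pointwise in $t$}. Now $u(t)\in L^{12}_b$, hence $u(t)\in L^{10}_b$ by interpolation with $L^6_b$, hence $f(u(t))\in L^2_b$, and weighted elliptic regularity gives $u(t)\in H^2_b$ with the bound \eqref{est sm1}. The paper's Remark immediately following the proof stresses that this $u^3$--multiplication is forced by the weak dissipativity assumption \eqref{f(s)s>-M}: under the stronger $f'(s)\geq -C$ one could instead multiply by $-\phi^2_{\eb,x_0}\Dx u$ in the usual way, but here that route is blocked.
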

\begin{proof}
Since the theorem is more or less standard we restrict ourselves to formal derivation of estimate \eqref{est sm1}. 

Let $v:=\Dt u$, then $v$ solves the problem
\begin{equation}
\label{eq ut}
\begin{cases}
\Dt^2v +\gamma(-\Dx+1)^\frac{1}{2}\Dt v-\Dx v+\lambda_0 v+f'(u)v=0,\\
v|_{\d\Omega}=0, \xi_v(0)=(\Dt u(0),\Dt^2 u(0))\in\E_b, 
\end{cases}
\end{equation}
where
\begin{equation}
\Dt^2 u(0):=-\gamma(-\Dx+1)^\frac{1}{2}u_1+\Dx u_0-\lambda_0u_0-f(u_0)+g.
\end{equation}
Let us check that $\xi_u(t)\in\E^1_b$ if and only if $\xi_v(t)\in\E_b$. Indeed,
from \eqref{eq main} we see, due to triangle inequality and continuous embedding $H^2_b(\Omega)\subset C_b(\Omega)$ (here $C_b(\Omega)$ denotes the space of continuous and bounded functions over $\Omega$), that 
\begin{equation}
\|\Dt^2 u(t)\|_{b,0,2}\leq Q(\|\xi_u(t)\|_{\E^1_b})+\|g\|_{b,0,2},
\end{equation}
and hence
\begin{equation}
\label{5.0}
\|\xi_v(t)\|_{\E_b}\leq Q(\|\xi_u(t)\|_{\E^1_b})+\|g\|_{b,0,2},
\end{equation}
for some monotone increasing $Q$.

And vice versa, rewriting equation \eqref{eq main} in the form
\begin{equation}
\label{5.1}
-\Dx u +\lambda_0 u+f(u)=g(x)-\Dt^2u-\gamma(-\Dx+1)^\frac{1}{2}\Dt u:=h(t)\in L^2_b(\Omega), 
\end{equation}
and multiplying equation \eqref{5.1} by $\phi^4_{\eb,x_0}u^3$, that can be justified as in Theorem \ref{th adreg1}, we deduce 
\begin{equation}
\|u(t)\|^4_{\{\eb,x_0\},0,12}\leq C\(1+\|u(t)\|^6_{\{\eb,x_0\},1,2}+\|h(t)\|^2_{\{\eb,x_0\},0,2}\)
\end{equation}  
uniformly with respect to $x_0$ and hence $u\in L^{12}_b(\Omega)$ and we have
\begin{equation}
\|u(t)\|^4_{b,0,12}\leq C\(1+\|u(t)\|^6_{b,1,2}+\|h(t)\|^2_{b,0,2}\).
\end{equation} 
Consequently $f(u)\in L^2_b(\Omega)$, indeed using \eqref{f.growth} we deduce
\begin{multline}
\label{5.2}
\|f(u)\|_{b,0,2}\leq C(1+\|u^5\|_{b,0,2})\leq  C(1+\|u\|^5_{b,0,10})\leq C(1+\|u\|^4_{b,0,12}\|u\|_{b,0,6})\leq\\
C(1+\|u\|^7_{b,1,2}+\|h\|^3_{b,0,2}),
\end{multline}
for some constant $C>0$. Combining \eqref{5.1}, \eqref{5.2} and using elliptic regularity we conclude that $u\in H^2_b(\Omega)$ and the following estimate holds
\begin{equation}
\|u\|^2_{b,2,2}\leq C(\|h\|^2_{b,0,2}+\|f(u)\|^2_{b,0,2})\leq Q(\|\xi_u\|_{\E_b})+Q(\|\xi_v\|_{\E_b})+C(1+\|g\|^6_{b,0,2})
\end{equation}
and hence
\begin{equation}
\label{5.3}
\|\xi_u\|^2_{\E^1_b}\leq Q(\|\xi_u\|_{\E_b})+Q(\|\xi_v\|_{\E_b})+C(1+\|g\|^6_{b,0,2}),
\end{equation}
for some monotone increasing function $Q$ and absolute constant $C$ which do not depend on $u$.

Thus to get estimate \eqref{est sm1} it suffices to get $\E_b$-norm control of $\xi_v(t)$. To this end we multiply equation \eqref{eq ut} by $\psi^2_{x_0}\Dt v$ 
\begin{equation}
\frac{1}{2}\frac{d}{dt}\|\xi_v(t)\|^2_{\Elr}+\gamma((-\Dx+1)^\frac{1}{2}\Dt v,\psi^2_{x_0}\Dt v)=\\ 
-(f'(u)v,\psi^2_{x_0}\Dt v)-2(\nabla v \nabla\psi_{x_0},\psi_{x_0}\Dt v).
\end{equation}
Arguing along the lines \eqref{est nl}-\eqref{4.11} we find
\begin{multline}
\label{5.3.1}
\frac{d}{dt}E_{x_0}(\xi_v)+2\gamma\|(-\Dx+1)^\frac{1}{4}\Dt v\|^2_{L^2(\Omega\cap B^1_{x_0})}\leq C\left(1+\|u\|^4_{L^{12}(\Omega\cap B^2_{x_0})}\right)E_{x_0}(\xi_v)+\\
\delta \|\phi_{\eb,x_0}(-\Dx+1)^\frac{1}{4}\Dt v\|^2_{L^2(\Omega)}+C_\delta\|\xi_v\|^2_{\Ee}, {\rm\ for\ all\ }t\geq 0,
\end{multline}
where $E_{x_0}(\xi_v)$ is defined by \eqref{Em} and $\eb$ is small. And repeating the arguments \eqref{4.12}-\eqref{4.13}
we derive
\begin{multline}
\label{5.4}
\|\xi_v(t)\|^2_{\E_{\{\mu,x_0\}}}+\kappa\int_0^t\|\phi_{\mu,x_0}(-\Dx+1)^\frac{1}{4}\Dt v\|^2_{L^2(\Omega)}\leq\\ Q_\mu(B_T)\(\|\xi_v(0)\|^2_{\E_{\{\mu,x_0\}}}+\int_0^t\|\xi_v(s)\|^2_{\E_{\{\mu,x_0\}}}ds\),\ t\in[0,T],
\end{multline}
where
\begin{equation}\label{BT}
B_T=\sup_{x_0\in\Omega}exp\left\{C\int_0^T\left(1+\|u\|^4_{L^{12}(\Omega\cap B^2_{x_0})}\right)dt\right\}
\end{equation}
for some constants $\kappa, C>0$ and monotone increasing function $Q_\mu$ with $\mu\in(0,\eb)$ which do not depend on $u$ and $t$. Taking the supremum with respect to $x_0$ in \eqref{5.4} and applying Gronwall's lemma we see that
\begin{equation}
\|\xi_v(t)\|^2_{\E_b}\leq Q(B_T)e^{Q(B_T)t}\|\xi_v(0)\|^2_{\E_b},\ t\in[0,T],
\end{equation}
for some monotone increasing function $Q$ that does not depend on $u$ and $t$. Taking into account \eqref{5.0}, \eqref{5.3}, \eqref{est adreg1}  we finish the proof. 
\end{proof}
\begin{rem}
Usually to prove the fact that $\xi_{\Dt u}(t)\in\E_b$ implies $\xi_u(t)\in\E^1_b$ it is more standard to multiply equation \eqref{5.2} by $\phi^2_{\eb,x_0}(-\Dx)u$ (or better by $\sum_k\d_k\phi^2_{\eb,x_0}\d_k u$, see \cite{Z sp.com} for details). This works well when, for example, $f'(s)\leq C(1+|s|^q)$ and $q\in [0,4)$ (that is in subcritical case) or  without growth restriction but with \eqref{f(s)s>-M} changed to $f'(s)\geq -M$. Since we have growth restriction (to have uniqueness)
and \eqref{f(s)s>-M} is less restrictive than $f'(s)\geq -M$ we have to deal in a different way, for example, using the trick with multiplication by $\phi^4_{\eb,x_0}u^3$. 
\end{rem}
The next theorem improves $L^4([0,T];L^{12}_{\{\eb,x_0\}}(\Omega))$ regularity obtained in Theorem \ref{th adreg1} to \linebreak
$L^2([0,T];\Cal H^\frac{3}{2}_{\{\eb,x_0\}}(\Omega))$ that will be used further to obtain smoothing property.
\begin{theorem}
\label{th adreg2}
Let assumptions of Theorem \ref{th ex} be satisfied and $\eb\in(0,\eb_0]$ be small enough. Then for every initial data $\xi_0=(u_0,u_1)\in\E_b$ the associated infinite-energy solution $u$ of problem \eqref{eq main} possesses extra regularity
$u\in L^2([0,T];\Cal H^\frac{3}{2}_{\{\eb,x_0\}}(\Omega))$ on arbitrary segment $[0,T]$ and the following estimate holds
\begin{equation}
\label{est adreg2}
\sup_{x_0\in \Omega}\int_{\max\{0,t-1\}}^t\|\phi_{\eb,x_0}(-\Dx+1)^\frac{3}{4}u(s)\|^2_{L^2(\Omega)}ds\leq Q_\eb(\|\xi_0\|_{\E_b})e^{-\beta t}+Q_\eb(\|g\|_{L^2_b(\Omega)}), \quad t\geq 0,
\end{equation}
for some constant $\beta>0$ and monotone increasing function $Q_\eb$ which are independent of $u$ ant $t$.
\end{theorem}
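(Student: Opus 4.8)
The plan is to upgrade the regularity by testing equation \eqref{eq main} with the weighted multiplier $\phi^2_{\eb,x_0}(-\Dx+1)^{\frac12}u$ — equivalently, working throughout with the weighted fractional norms $\|\phi_{\eb,x_0}(-\Dx+1)^{s}u\|_{L^2(\Omega)}$ and the norm equivalences of Corollary \ref{cor equiv n1} (extended to $s\in(0,1)$ by the Remark after Proposition \ref{prop comest2}). The crucial new ingredient, unavailable within Theorem \ref{th adreg1} itself, is that the nonlinearity is now integrable in time: interpolating $L^{12}_{\{\eb,x_0\}}$ against $H^1_{\{\eb,x_0\}}\subset L^6_{\{\eb,x_0\}}$ gives $u\in L^5([0,T];L^{10}_{\{\eb,x_0\}}(\Omega))$ uniformly in $x_0$, with bound governed by \eqref{est en} and \eqref{est adreg1}, whence by \eqref{f.growth} one has $\phi_{\eb,x_0}f(u)\in L^1([0,T];L^2(\Omega))$, again uniformly in $x_0$.

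Granting (see below) that the multiplication is admissible, and splitting $(-\Dx+1)^{\frac12}=(-\Dx+1)^{\frac14}(-\Dx+1)^{\frac14}$ to keep all commutators at the non-critical order $\frac14$, the terms organize as follows. The elliptic part $-\Dx u+\lambda_0 u$ produces, modulo commutators and Corollary \ref{cor equiv n1}, the good term $\|\phi_{\eb,x_0}(-\Dx+1)^{\frac34}u\|^2_{L^2(\Omega)}$ plus a term of order $\|u\|^2_{\{\eb,x_0\},1,2}$; the term $\Dt^2 u$ gives a total time derivative minus $\|\phi_{\eb,x_0}(-\Dx+1)^{\frac14}\Dt u\|^2_{L^2(\Omega)}$, integrable in time by \eqref{est en}; the damping term $\gamma(-\Dx+1)^{\frac12}\Dt u$ is, up to commutators, $\frac{\gamma}{2}\frac{d}{dt}\|\phi_{\eb,x_0}(-\Dx+1)^{\frac12}u\|^2_{L^2(\Omega)}$, a total derivative controlled by the energy; and the contributions of $f(u)$ and $g$ are bounded by $(\|\phi_{\eb,x_0}f(u)\|_{L^2(\Omega)}+\|g\|_{\{\eb,x_0\},0,2})\,\|u\|_{\{\eb,x_0\},1,2}$, which lies in $L^1$ in time by the first paragraph together with \eqref{est en}. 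Integrating from $\max\{0,t-1\}$ to $t$, estimating the boundary terms by the energy, and inserting \eqref{est en} and \eqref{est adreg1}, gives the desired bound for $\int_{\max\{0,t-1\}}^t\|\phi_{\eb,x_0}(-\Dx+1)^{\frac34}u(s)\|^2_{L^2(\Omega)}ds$ uniformly in $x_0$; taking the supremum over $x_0$, with Theorems \ref{th wsp1} and \ref{th wsp2} bridging weighted and uniformly local norms, yields \eqref{est adreg2}.

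The commutator errors produced when sliding $\phi_{\eb,x_0}$ across the fractional powers — expressions built from $[(-\Dx+1)^{\frac14},\phi_{\eb,x_0}]$ and their compositions — are handled exactly in the spirit of the proof of Theorem \ref{th uniq cont}, via Propositions \ref{prop comest1}, \ref{prop comest2} and Corollary \ref{cor equiv n1}; for $\eb\le\eb_0$ small each carries a factor $O(\eb^{1/2})$ and is absorbed into a small multiple of $\|\phi_{\eb,x_0}(-\Dx+1)^{\frac34}u\|^2_{L^2(\Omega)}$ plus lower-order terms of order $\|u\|^2_{\{\eb,x_0\},1,2}$.

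I expect the main obstacle to be the rigorous justification of testing with $\phi^2_{\eb,x_0}(-\Dx+1)^{\frac12}u$: a priori $u(t)$ belongs only to $H^1_b$, so $(-\Dx+1)^{\frac12}u\in L^2$ is not immediately admissible against $\Dt^2 u\in\Cal H^{-1}_{\{\eb,x_0\}}(\Omega)$. As in Theorem \ref{th adreg1}, this is resolved by a regularization — replacing $(-\Dx+1)^{\frac12}$ by a bounded Yosida-type approximation $(-\Dx+1)^{\frac12}(1+\delta(-\Dx+1))^{-1}$, or, equivalently, using finite-dimensional Galerkin truncations — deriving the estimate uniformly in the regularization parameter, and then passing to the limit, the good terms by Fatou/monotone-type arguments and the $L^2([0,T];\Cal H^{\frac32}_{\{\eb,x_0\}}(\Omega))$-norm by lower semicontinuity. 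Once this is in place, the remaining computations are routine given Theorems \ref{th ex} and \ref{th adreg1} and the commutator calculus of Section \ref{s.cest}.
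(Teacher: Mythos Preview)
Your proposal is correct and follows essentially the same route as the paper: multiply \eqref{eq main} by $\phi^2_{\eb,x_0}(-\Dx+1)^{\frac12}u$, use the $L^5L^{10}$ interpolation consequence of Theorem \ref{th adreg1} to place $f(u)$ in $L^1([0,T];L^2_{\{\eb,x_0\}}(\Omega))$, handle the fractional terms via the $[\phi_{\eb,x_0},(-\Dx+1)^{\frac14}]$ commutator calculus to extract the good term $\|\phi_{\eb,x_0}(-\Dx+1)^{\frac34}u\|^2_{L^2(\Omega)}$, and integrate over $[\max\{0,t-1\},t]$.

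The only point of divergence is the justification of the multiplication. You propose a Yosida-type regularization of the fractional operator (or Galerkin truncation); the paper instead approximates the \emph{solution} $u$ by smooth solutions, invoking Theorem \ref{th sm1} (already proved just before, giving $\xi_u\in\E^1_b$ for $\E^1_b$ data) together with the continuous dependence \eqref{cont.dep.id}. Both routes are legitimate. The paper's choice exploits the particular order in which the results are developed and avoids introducing an additional approximation scheme; your Yosida approach is self-contained and does not rely on Theorem \ref{th sm1}, but note that your analogy ``as in Theorem \ref{th adreg1}'' is slightly off --- there the regularization was a truncation $\psi_n(r)\to r^3$ of the test function, not of the operator.
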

\begin{proof}
The result follows from multiplication of equation \eqref{eq main} by $\phi^2_{\e,x_0}(-\Dx+1)^\frac{1}{2}u$. As we already noticed in Theorem \ref{th uniq cont}, due to obtained extra regularity \eqref{est adreg1} we know that $f(u)\in L^1([0,T];L^2_{\{\eb,x_0\}}(\Omega))$ when $\eb$ is small enough. Thus, the product $(\phi^2_{\eb,x_0}(-\Dx+1)^\frac{1}{2}u,f(u))$ makes sense. To justify the product of the linear part of \eqref{eq main} by $\phi^2_{\e,x_0}(-\Dx+1)^\frac{1}{2}u$ one can approximate our solution $u$ by smooth solutions that can be done due to just obtained Theorem \ref{th sm1} and continuous dependence on initial data \eqref{cont.dep.id}. Since the approximation procedure is standard we focus on the derivation of the estimate only. Multiplication of \eqref{eq main} by $\phi^2_{\e,x_0}(-\Dx+1)^\frac{1}{2}u$ gives
\begin{multline}
\label{5.5}
\frac{d}{dt}\left((\phi_{\eb,x_0}\Dt u,\phi_{\eb,x_0}(-\Dx+1)^\frac{1}{2}u)+\frac{\gamma}{2}\|\phi_{\eb,x_0}(-\Dx+1)^\frac{1}{2}u\|^2_{L^2(\Omega)}\right)+\\
((-\Dx+1)u,\phi^2_{\eb,x_0}(-\Dx+1)^\frac{1}{2}u)=(1-\lambda_0)(\phi_{\eb,x_0}u,\phi_{\eb,x_0}(-\Dx+1)^\frac{1}{2}u)+\\
(\Dt u,\phi^2_{\eb,x_0}(-\Dx+1)^\frac{1}{2}\Dt u)-(\phi_{\eb,x_0}f(u),\phi_{\eb,x_0}(-\Dx+1)^\frac{1}{2}u)+(\phi_{\eb,x_0}g,\phi_{\eb,x_0}(-\Dx+1)^\frac{1}{2}u).
\end{multline} 
Similar to \eqref{4.91} the term involving fractional laplacian on the left hand side of \eqref{5.5} can be transformed as follows
\begin{multline}
((-\Dx+1)u,\phi^2_{\eb,x_0}(-\Dx+1)^\frac{1}{2}u)=\\
(\left[\phi_{\eb,x_0},(-\Dx+1)^\frac{1}{4}\right](-\Dx+1)^\frac{3}{4}u,\phi_{\eb,x_0}(-\Dx+1)^\frac{1}{2}u)+\\
(\phi_{\eb,x_0}(-\Dx+1)^\frac{3}{4}u,\left[(-\Dx+1)^\frac{1}{4},\phi_{\eb,x_0}\right](-\Dx+1)^\frac{1}{2}u)+\|\phi_{\eb,x_0}(-\Dx+1)^\frac{3}{4}u\|^2_{L^2(\Omega)},
\end{multline}    
and hence, using Proposition \ref{prop comest1} and fixing $\eb$ small enough, we get
\begin{multline}
\label{5.6}
((-\Dx+1)u,\phi^2_{\eb,x_0}(-\Dx+1)^\frac{1}{2}u)\geq\\
\frac{1}{2} \|\phi_{\eb,x_0}(-\Dx+1)^\frac{3}{4}u\|^2_{L^2(\Omega)}-C\|\phi_{\eb,x_0}(-\Dx+1)^\frac{1}{2}u\|^2_{L^2(\Omega)}.
\end{multline}
Analogously we have
\begin{multline}
(\Dt u,\phi^2_{\eb,x_0}(-\Dx+1)^\frac{1}{2}\Dt u)=(\phi_{\eb,x_0}\Dt u,\left[\phi_{\eb,x_0},(-\Dx+1)^\frac{1}{4}\right](-\Dx+1)^\frac{1}{4}\Dt u)+\\
(\left[(-\Dx+1)^\frac{1}{4},\phi_{\eb,x_0}\right]\Dt u,\phi_{\e,x_0}(-\Dx+1)^\frac{1}{4}\Dt u)+\|\phi_{\eb,x_0}(-\Dx+1)^\frac{1}{4}\Dt u\|^2_{L^2(\Omega)},
\end{multline}
and hence
\begin{equation}
\label{5.7}
(\Dt u,\phi^2_{\eb,x_0}(-\Dx+1)^\frac{1}{2}\Dt u)\leq C( \|\phi_{\eb,x_0}(-\Dx+1)^\frac{1}{4}\Dt u\|^2_{L^2(\Omega)}+\|\phi_{\eb,x_0}\Dt u\|^2_{L^2(\Omega)}).
\end{equation}
In addition, along the lines of \eqref{5.2} the non-linear term admits the estimate
\begin{multline}
\label{5.8}
|(\phi^2_{\eb,x_0}f(u),(-\Dx+1)^\frac{1}{2}u)|\leq \|\phi_{\eb,x_0}f(u)\|_{L^2(\Omega)}\|\phi_{\eb,x_0}(-\Dx+1)^\frac{1}{2}u\|_{L^2(\Omega)}\leq\\ 
 C_\eb\left(1+\|\phi_{\eb/5,x_0}u\|^5_{L^{10}(\Omega)}\right)\|\phi_{\eb,x_0}(-\Dx+1)^\frac{1}{2}u\|_{L^2(\Omega)}\leq\\
  C_\eb\(1+\|\xi_u\|^2_{\E_{\{\eb/5,x_0\}}}+\|\phi_{\eb/5,x_0}u\|^4_{L^{12}(\Omega)}\|\xi_u\|^2_{\E_{\{\eb/5,x_0\}}}\).
\end{multline}

Let us integrate \eqref{5.5} from $\max\{0,t-1\}$ to $t$. Keeping in mind \eqref{5.6}-\eqref{5.8}, applying dissipative estimate \eqref{est en} and extra regularity estimate \eqref{est adreg1} we obtain
\begin{equation}\label{5.8.1}
\int_{\max\{0,t-1\}}^t\|\phi_{\eb,x_0}(-\Dx+1)^\frac{3}{4}u(s)\|^2_{L^2(\Omega)}ds\leq Q_\eb(\|\xi_0\|_{\E_b})e^{-\beta t}+Q_\eb(\|g\|_{L^2_b(\Omega)}),\quad t\geq 0,
\end{equation} 
for some $\beta >0$ and monotone increasing function $Q_\eb$ which are independent of $u$. Since the estimate \eqref{5.8.1} is uniform with respect to $x_0$ we finish the proof.
\end{proof}
\begin{cor}
\label{cor.utt-sm}
Let assumptions of Theorem \ref{th ex} be satisfied and $\eb\in(0,\eb_0]$ be small enough. Then for every initial data $\xi_0=(u_0,u_1)\in\E_b$ the associated infinite-energy solution $u$ of problem \eqref{eq main} possesses extra regularity
\begin{equation}
(-\Dx+1)^{-\frac{1}{4}}\left(\phi_{\eb,x_0}\Dt^2u\right)\in L^2([0,T];L^2(\Omega))
\end{equation}
for arbitrary $T>0$ and the following estimate holds
\begin{multline}
\label{est.utt-sm}
\sup_{x_0\in \Omega}\int_{\max\{0,t-1\}}^t\|(-\Dx+1)^{-\frac{1}{4}}\left(\phi_{\eb,x_0}\Dt^2u(s)\right)\|^2_{L^2(\Omega)}ds\leq\\
 Q_\eb(\|\xi_0\|_{\E_b})e^{-\beta t}+Q_\eb(\|g\|_{L^2_b(\Omega)}), \quad t\geq 0,
\end{multline}
for some constant $\beta>0$ and monotone increasing function $Q_\eb$ which are independent of $u$ ant $t$.
\end{cor}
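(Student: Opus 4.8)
The plan is to read $\Dt^2u$ off equation \eqref{eq main},
\[
\Dt^2u=-\gamma(-\Dx+1)^\frac12\Dt u+\Dx u-\lambda_0u-f(u)+g,
\]
multiply by $\phi_{\eb,x_0}$, apply $(-\Dx+1)^{-\frac14}$, and bound each of the five resulting terms in $L^2(\Omega)$ by a quantity whose square integrates over $[\max\{0,t-1\},t]$ against the right-hand side of \eqref{est.utt-sm}. The admissible inputs are the dissipative estimate \eqref{est en}, the $L^4L^{12}$-regularity \eqref{est adreg1} of Theorem \ref{th adreg1}, and the $\Cal H^\frac32$-regularity \eqref{est adreg2} of Theorem \ref{th adreg2}. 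As in Theorem \ref{th adreg2}, the formal manipulations are legitimised by approximating $u$ with the smooth solutions furnished by Theorem \ref{th sm1} together with the continuous dependence \eqref{cont.dep.id}, so only the a priori bounds are sketched; throughout, $\eb>0$ is chosen so small that $\eb/5\le\eb_0$, whence Theorems \ref{th ex}, \ref{th adreg1}, \ref{th adreg2} and Proposition \ref{prop comest1} all apply.

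For the linear terms I would systematically avoid the delicate critical commutator $[\phi_{\eb,x_0},(-\Dx+1)^\frac12]$ by factoring $(-\Dx+1)^\frac12=(-\Dx+1)^\frac14(-\Dx+1)^\frac14$ and $\Dx=-(-\Dx+1)+1$, and using the identity
\[
(-\Dx+1)^{-\frac14}\!\left(\phi_{\eb,x_0}(-\Dx+1)^\frac14 w\right)=\phi_{\eb,x_0}w-(-\Dx+1)^{-\frac14}\big[(-\Dx+1)^\frac14,\phi_{\eb,x_0}\big]w .
\]
Since $(-\Dx+1)^{-\frac14}$ is a contraction on $L^2(\Omega)$, Proposition \ref{prop comest1} with $\theta=\frac14$ bounds the last term by $C\eb^\frac12\|\phi_{\eb,x_0}w\|_{L^2(\Omega)}$; taking $w=(-\Dx+1)^\frac14\Dt u$ reduces the $(-\Dx+1)^\frac12\Dt u$ contribution to a multiple of $\|\phi_{\eb,x_0}(-\Dx+1)^\frac14\Dt u\|_{L^2(\Omega)}$, whose square integrates by \eqref{est en}, and taking $w=(-\Dx+1)^\frac34u$ reduces the $(-\Dx+1)u$ part of $\Dx u$ to a multiple of $\|\phi_{\eb,x_0}(-\Dx+1)^\frac34u\|_{L^2(\Omega)}$, handled by \eqref{est adreg2}. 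The remaining terms $\lambda_0u$, the leftover ``$+\,u$'' from $\Dx u$, and $g$ are immediate from $\|(-\Dx+1)^{-\frac14}(\phi_{\eb,x_0}v)\|_{L^2(\Omega)}\le\|\phi_{\eb,x_0}v\|_{L^2(\Omega)}$ together with \eqref{est en} and $g\in L^2_b(\Omega)$.

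The main obstacle is the nonlinear term: since $f(u)$ is of order $u^5$ and $u$ is only known to lie in $L^4([0,T];L^{12}_{\{\eb,x_0\}}(\Omega))$, $\phi_{\eb,x_0}f(u)$ is a priori merely in $L^1$ (not $L^2$) in time with values in $L^2(\Omega)$, so the bare bound $\|(-\Dx+1)^{-\frac14}(\phi_{\eb,x_0}f(u))\|_{L^2}\le\|\phi_{\eb,x_0}f(u)\|_{L^2}$ is not enough. Instead one must cash in the smoothing of $(-\Dx+1)^{-\frac14}$: since this operator is self-adjoint on $L^2(\Omega)$ and maps $L^2(\Omega)$ into $\Cal H^\frac12(\Omega)\hookrightarrow L^3(\Omega)$ (dimension three), duality gives $\|(-\Dx+1)^{-\frac14}(\phi_{\eb,x_0}f(u))\|_{L^2(\Omega)}\lesssim\|\phi_{\eb,x_0}f(u)\|_{L^{3/2}(\Omega)}$, whence the growth assumption \eqref{f.growth} and the integrability of $\phi_{\eb,x_0}^{3/2}$ yield $\|\phi_{\eb,x_0}f(u)\|_{L^{3/2}(\Omega)}\lesssim 1+\|\phi_{\eb/5,x_0}u\|^5_{L^{15/2}(\Omega)}$. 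Interpolating $\phi_{\eb/5,x_0}u$ between $L^{12}$ and $L^6$ (exponents $\tfrac{2}{5},\tfrac{3}{5}$, the weight distributing since $\tfrac{2}{5}+\tfrac{3}{5}=1$) gives $\|\phi_{\eb/5,x_0}u\|^5_{L^{15/2}}\lesssim\|\phi_{\eb/5,x_0}u\|^2_{L^{12}}\|\phi_{\eb/5,x_0}u\|^3_{L^6}$; squaring, integrating over $[\max\{0,t-1\},t]$, pulling out $\sup_s\|\phi_{\eb/5,x_0}u(s)\|^6_{L^6(\Omega)}$ (bounded via \eqref{est en} and $\Cal H^1\hookrightarrow L^6$), and invoking Theorem \ref{th adreg1} with $\eb$ replaced by $\eb/5$ bounds this contribution by $Q_\eb(\|\xi_0\|_{\E_b})e^{-\beta t}+Q_\eb(\|g\|_{L^2_b(\Omega)})$ as well, after the routine bookkeeping that a supremum over a unit time-window preserves the exponential form of \eqref{est en}. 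Collecting the five estimates, squaring and taking the supremum over $x_0\in\Omega$ completes the proof.
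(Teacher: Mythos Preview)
Your proof is correct and follows essentially the same route as the paper: express $\Dt^2u$ via the equation, handle $f(u)$ by the duality $\Cal H^{-\frac12}\supset L^{3/2}$ followed by the $L^6$--$L^{12}$ interpolation $\|\phi_{\eb/5,x_0}u\|^{10}_{L^{15/2}}\le\|\phi_{\eb/5,x_0}u\|^6_{L^6}\|\phi_{\eb/5,x_0}u\|^4_{L^{12}}$, and reduce the two ``heavy'' linear terms to $\|\phi_{\eb,x_0}(-\Dx+1)^\frac14\Dt u\|_{L^2}$ and $\|\phi_{\eb,x_0}(-\Dx+1)^\frac34u\|_{L^2}$ via the $\theta=\frac14$ commutator of Proposition \ref{prop comest1}. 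The only cosmetic difference is that the paper phrases the linear estimates through a test-function duality argument, whereas you write the equivalent algebraic identity $(-\Dx+1)^{-\frac14}\!\big(\phi_{\eb,x_0}(-\Dx+1)^\frac14 w\big)=\phi_{\eb,x_0}w-(-\Dx+1)^{-\frac14}[(-\Dx+1)^\frac14,\phi_{\eb,x_0}]w$ directly; both lead to the same bounds.
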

\begin{proof}
Since $u$ solves the equation \eqref{eq main} the formula for $\Dt^2 u$ reads
\begin{equation}
\Dt^2u=-\gamma(-\Dx+1)^\frac{1}{2}\Dt u-(-\Dx+1)u+(1-\lambda_0)u-f(u)+g.
\end{equation}
Let us estimate each of the terms.

By virtue of \eqref{f.growth}, continuous embedding $L^\frac{3}{2}(\Omega)\subset\Cal H^{-\frac{1}{2}}_{\{0,0\}}(\Omega)$ and interpolation $\left[L^6(\Omega),L^{12}(\Omega)\right]_{\frac{2}{5}}=L^\frac{15}{2}(\Omega)$ we have 
\begin{multline}
\int_{\max\{0,t-1\}}^t\|(-\Dx+1)^{-\frac{1}{4}}\left(\phi_{\eb,x_0}f(u)\right)\|^2_{L^2(\Omega)}ds\leq C\int_{\max\{0,t-1\}}^t\|\phi_{\eb,x_0}f(u(s))\|^2_{L^\frac{3}{2}(\Omega)}ds\leq\\
C_\eb+C\int_{\max\{0,t-1\}}^t\|\phi^5_{\eb/5,\,x_0}u^5\|^2_{L^\frac{3}{2}(\Omega)}ds\leq C_\eb+C\int_{\max\{0,t-1\}}^t\|\phi_{\eb/5,\,x_0}u\|^6_{L^6(\Omega)}\|\phi_{\eb/5,\,x_0}u\|^4_{L^{12}(\Omega)}ds.
\end{multline}

The term with fractional laplacian can be estimated using arguments on test functions and commutator estimates. Let $\chi\in L^2([0,T];L^2(\Omega))$ be a test function, then  
\begin{multline}
\int_{\max\{0,t-1\}}^t\left((-\Dx+1)^{-\frac{1}{4}}\left(\phi_{\eb,x_0}(-\Dx+1)^\frac{1}{2}\Dt u(s)\right),\chi(s)\right)ds=\\
\int_{\max\{0,t-1\}}^t\left(\phi_{\eb,x_0}(-\Dx+1)^\frac{1}{2}\Dt u(s),(-\Dx+1)^{-\frac{1}{4}}\chi(s)\right)ds=\\
\int_{\max\{0,t-1\}}^t\left(\left[\phi_{\eb,x_0},(-\Dx+1)^\frac{1}{4}\right](-\Dx+1)^\frac{1}{4}\Dt u(s),(-\Dx+1)^{-\frac{1}{4}}\chi(s)\right)ds+\\
\int_{\max\{0,t-1\}}^t\left(\phi_{\eb,x_0}(-\Dx+1)^\frac{1}{4}\Dt u(s),\chi(s)\right)ds\leq\\C\|\phi_{\eb,x_0}(-\Dx+1)^\frac{1}{4}\Dt u\|_{L^2([\max\{0,t-1\},t];L^2(\Omega))}\|\chi\|_{L^2([\max\{0,t-1\},t];L^2(\Omega))},
\end{multline} 
for some absolute constant $C$ assuming that $\eb$ is small, and therefore
\begin{multline}
\int_{\max\{0,t-1\}}^t\|(-\Dx+1)^{-\frac{1}{4}}\left(\phi_{\eb,x_0}(-\Dx+1)^\frac{1}{2}\Dt u(s)\right)\|^2_{L^2(\Omega)}ds\leq\\
 C\int_{\max\{0,t-1\}}^t\|\phi_{\eb,x_0}(-\Dx+1)^\frac{1}{4}\Dt u(s)\|^2_{L^2(\Omega)}ds.
\end{multline}
Similarly we obtain the estimate
\begin{multline}
\int_{\max\{0,t-1\}}^t\|(-\Dx+1)^{-\frac{1}{4}}\left(\phi_{\eb,x_0}(-\Dx+1)u(s)\right)\|^2_{L^2(\Omega)}ds\leq\\ 
C\int_{\max\{0,t-1\}}^t\|\phi_{\eb,x_0}(-\Dx+1)^\frac{3}{4}u(s)\|^2_{L^2(\Omega)}ds.
\end{multline}
The estimates of the rest terms are trivial. Combining the above estimates with energy estimate \eqref{est en}, Theorem \ref{th adreg1}, Theorem \ref{th adreg2} we complete the proof.
\end{proof}
The next theorem gives us smoothing property mentioned before.
\begin{theorem}
\label{th sm2}
Let assumptions of Theorem \ref{th ex} hold true, then for any initial data $\xi_0:=(u_0,u_1)\in \E_b$ the associated infinite-energy solution of problem \eqref{eq main} possesses smoothing property for $t\in(0,1]$:
\begin{equation}
\label{est sm2}
t^2\(\|\xi_{\Dt u}(t)\|^2_{\E_b}+\|\xi_u(t)\|^2_{\E^1_b}\)\leq Q(\|\xi_0\|_{\E_b})+Q(\|g\|_{L^2_b(\Omega)}),\ t\in(0,1],
\end{equation}
for some monotone increasing $Q$ which is independent of $t$ and $u$.
\end{theorem}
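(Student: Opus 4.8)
\emph{Strategy and reduction.} The plan is to carry out the standard parabolic-type smoothing argument for the fractionally damped wave equation (as in \cite{SZ2014}) in the uniformly local framework, working throughout with smooth approximating solutions — which exist by Theorem \ref{th sm1} together with the continuous dependence estimate \eqref{cont.dep.id} — so that all of the multiplications below are legitimate, and passing to the limit only at the end; accordingly the computations are formal. First I would note that, by the elliptic regularity argument \eqref{5.1}--\eqref{5.3} from the proof of Theorem \ref{th sm1} (using \eqref{f.growth} and the same interpolation), one has, tracking powers, $\|\xi_u(t)\|^2_{\E^1_b}\le Q\bigl(1+\|\xi_{\Dt u}(t)\|^4_{\E_b}\bigr)$ up to quantities already controlled by the dissipative estimate \eqref{est en}. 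Hence a bound of the form $t\,\|\xi_{\Dt u}(t)\|^2_{\E_b}\le Q$ automatically gives $t^2\,\|\xi_u(t)\|^2_{\E^1_b}\le Q$, so it suffices to prove
\begin{equation*}
t\,\|\xi_{\Dt u}(t)\|^2_{\E_b}\le Q(\|\xi_0\|_{\E_b})+Q(\|g\|_{L^2_b(\Omega)}),\qquad t\in(0,1],
\end{equation*}
and then $t^2\le t$ on $(0,1]$ yields the $\xi_{\Dt u}$–part of \eqref{est sm2} as well.

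\emph{Main multiplication.} Setting $v:=\Dt u$, which solves \eqref{eq ut}, I would multiply \eqref{eq ut} by $t\,\psi^2_{x_0}\Dt v$ and repeat the computation \eqref{est nl}--\eqref{5.3.1} verbatim, with the extra factor $t$ and using $\frac{d}{dt}\bigl(t\,E_{x_0}(\xi_v)\bigr)=E_{x_0}(\xi_v)+t\,\frac{d}{dt}E_{x_0}(\xi_v)$ for the modified energy $E_{x_0}$ of \eqref{Em}. In the resulting differential inequality the good damping term $\gamma\,t\,\|(-\Dx+1)^{1/4}\Dt v\|^2_{L^2(\Omega\cap B^1_{x_0})}$ and the Gronwall-type term $C\bigl(1+\|u\|^4_{L^{12}(\Omega\cap B^2_{x_0})}\bigr)\,t\,E_{x_0}(\xi_v)$ occur exactly as in \eqref{5.3.1}, the $\delta$-term $\delta\,t\,\|\phi_{\eb,x_0}(-\Dx+1)^{1/4}\Dt v\|^2_{L^2(\Omega)}$ will be absorbed later as in \eqref{4.12}--\eqref{4.13}, and the price for the factor $t$ is a remainder $R_{x_0}(t)$ collecting the cross term $E_{x_0}(\xi_v)$, the commutator term $C_\delta\,t\,\|\phi_{\eb,x_0}\Dt^2 u\|^2_{L^2(\Omega)}$ and a few harmless lower-order terms.

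\emph{The main obstacle: bootstrapping $R_{x_0}$.} This is the only genuinely delicate point. The remainder $R_{x_0}(t)$ is \emph{not} controlled by the estimates of Sections \ref{s.ex}--\ref{s.sm} as they stand, because those provide $\Dt u$ and $\Dt^2 u$ only at the levels $\Cal H^{1/2}_{\{\eb,x_0\}}$ and $\Cal H^{-1/2}_{\{\eb,x_0\}}$ in $L^2$ in time (from \eqref{est en} and Corollary \ref{cor.utt-sm}), whereas $R_{x_0}$ involves $\xi_v=(\Dt u,\Dt^2 u)$ at the full energy level. I would generate the missing intermediate regularity by a short chain of preliminary localized multiplications of \eqref{eq ut}, each again carrying a factor $t$ (so that the initial datum drops out) and each fed by the previous one, the error terms being handled by the commutator estimates of Section \ref{s.cest}: first by $t\,\psi^2_{x_0}(-\Dx+1)^{-1/2}\Dt v$ (weights rearranged modulo commutators so that the damping term coercively controls $\|\psi_{x_0}\Dt^2 u\|^2_{L^2(\Omega)}$ and the companion energy, once integrated in time, is dominated by \eqref{est en} and \eqref{est.utt-sm}); this step closes by Gronwall's lemma with vanishing initial datum and gives in particular $\sup_{x_0}\int_0^t s\,\|\psi_{x_0}\Dt^2 u(s)\|^2_{L^2(\Omega)}\,ds\le Q_\eb$ and $\sup_{x_0}t\,\|\psi_{x_0}(-\Dx+1)^{1/4}\Dt u(t)\|^2_{L^2(\Omega)}\le Q_\eb$; and then by $t\,\psi^2_{x_0}v$, feeding these into its right-hand side to reach $\sup_{x_0}\int_0^t s\,\|\nabla(\psi_{x_0}\Dt u(s))\|^2_{L^2(\Omega)}\,ds\le Q_\eb$. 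Together with \eqref{est en}, \eqref{est adreg1} and \eqref{est.utt-sm} this bounds $\int_0^t R_{x_0}(s)\,ds$ uniformly in $x_0$ for $t\in(0,1]$.

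\emph{Conclusion.} With $\int_0^t R_{x_0}(s)\,ds\le Q_\eb$ in hand I would integrate the main differential inequality from $0$ to $t$ (the boundary term at $s=0$ vanishing thanks to the factor $t$), multiply by $\phi^2_{\mu,x_0}$ with $0<\mu<\eb$, integrate in $x_0$ and use Theorems \ref{th wsp1} and \ref{th wsp2} to absorb the $\delta$-term exactly as in \eqref{4.12}--\eqref{4.13}, note that $\int_0^t\bigl(1+\|u(s)\|^4_{L^{12}(\Omega\cap B^2_{x_0})}\bigr)\,ds$ is finite and uniform in $x_0$ by \eqref{est adreg1}, and apply Gronwall's lemma to obtain $t\,\|\xi_{\Dt u}(t)\|^2_{\E_b}\le Q_\eb$ for $t\in(0,1]$; the reduction step then upgrades this to $t^2\,\|\xi_u(t)\|^2_{\E^1_b}\le Q$, and passing to the limit in the smooth approximations completes the proof of \eqref{est sm2}. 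I expect the bootstrap to be the only step requiring real work: there the $t$-weighted energy identity for $v$ manufactures remainder terms a full derivative above what Sections \ref{s.ex}--\ref{s.sm} supply, and reaching energy level for $\xi_{\Dt u}$ needs precisely this carefully ordered chain of lower-order multipliers.
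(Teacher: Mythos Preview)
There is a genuine weight-counting mismatch in your bootstrap that prevents it from closing. In the main multiplication you use the factor $t$, so the price is the cross term $E_{x_0}(\xi_v)$ \emph{without} any time weight, and you need $\int_0^t E_{x_0}(\xi_v(s))\,ds$ bounded uniformly in $x_0$, i.e.\ $\int_0^t\bigl(\|\psi_{x_0}\nabla\Dt u\|^2+\|\psi_{x_0}\Dt^2 u\|^2\bigr)\,ds\le Q$. But your preliminary Steps 1 and 2 also carry a factor $t$, so they only deliver the \emph{weighted} integrals $\int_0^t s\,\|\psi_{x_0}\Dt^2 u(s)\|^2\,ds$ and $\int_0^t s\,\|\nabla(\psi_{x_0}\Dt u)(s)\|^2\,ds$; the extra $s$ cannot be removed a posteriori. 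Dropping the $t$ from Steps 1--2 is not an option either, since the energy at level $\Cal H^{1/2}\times\Cal H^{-1/2}$ requires $u_1\in H^{1/2}_b$, which you do not have. The easy fix is to use the factor $t^2$ in the main multiplication (as the theorem itself suggests): the cross term then becomes $2t\,E_{x_0}(\xi_v)$, and your Steps 1--2 provide exactly the needed $\int_0^t s\,(\ldots)\,ds$ bounds; the output is $t^2\|\xi_{\Dt u}(t)\|^2_{\E_b}\le Q$, which is the statement.

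By comparison, the paper avoids the bootstrap entirely. Working with the weight $t^2$ from the outset, it handles the two pieces of the cross term $2t\,E_{x_0}(\xi_v)$ by structural tricks rather than by generating intermediate regularity: the term $2t\|\psi_{x_0}\Dt v\|^2$ is written as the pairing $Ct\bigl(\phi_{\eb,x_0}\Dt^2 u,\phi_{\eb,x_0}\Dt v\bigr)$ and split by duality as $\tfrac{\delta t^2}{2}\|\phi_{\eb,x_0}(-\Dx+1)^{1/4}\Dt v\|^2+C_\delta\|(-\Dx+1)^{-1/4}(\phi_{\eb,x_0}\Dt^2 u)\|^2$, the first piece absorbed by the damping and the second controlled by Corollary~\ref{cor.utt-sm}; the term $2t\bigl(\lambda_0\|\psi_{x_0}v\|^2+\|\psi_{x_0}\nabla v\|^2\bigr)$ is recast as a total time derivative $2\frac{d}{dt}\bigl(\lambda_0 t(\psi_{x_0}u,\psi_{x_0}v)+t(\psi_{x_0}\nabla u,\psi_{x_0}\nabla v)\bigr)$ minus a remainder involving only pairings of $u$ (at level $\Cal H^{3/2}$, available from Theorem~\ref{th adreg2}) with $\Dt v$ (at level $\Cal H^{1/2}$, absorbed). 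This is both shorter and uses only the regularity already on the shelf; your two-step bootstrap, once the weight is corrected to $t^2$, would also work but is considerably heavier.
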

\begin{proof}
As in Theorem \ref{th sm1} we give only formal derivation of estimate \eqref{est sm2}. Setting $v=\Dt u$ as in Theorem \ref{th sm1} and multiplying inequality \eqref{5.3.1} by $t^2$ we obtain
\begin{multline}
\label{5.9}
\frac{d}{dt}\(t^2E_{x_0}(\xi_v)\)+2\gamma t^2\|(-\Dx+1)^\frac{1}{4}\Dt v\|^2_{L^2(\Omega\cap B^1_{x_0})}\leq\\ 
C\(1+\|u\|^4_{L^{12}(\Omega\cap B^2_{x_0})}\)t^2E_{x_0}(\xi_v)+\delta t^2\|\phi_{\eb,x_0}(-\Dx+1)^\frac{1}{4}\Dt v\|^2_{L^2(\Omega)}+C_\delta\|\xi_v\|^2_{\Ee}+ \\
2t\(\|\psi_{x_0}\Dt v\|^2_{L^2(\Omega)}+\lambda_0\|\psi_{x_0}v\|^2_{L^2(\Omega)}+\|\psi_{x_0}\nabla v\|^2_{L^2(\Omega)}\).
\end{multline}

Let us estimate the last three terms of \eqref{5.9} separately. The first one can be controlled due to commutator estimates as follows (assuming $\eb$ is small)
\begin{multline}
\label{5.10}
2t\|\psi_{x_0}\Dt v\|^2_{L^2(\Omega)}\leq C t\|\phi_{\eb,x_0}\Dt v\|^2_{L^2(\Omega)}= Ct(\phi_{\eb,x_0}\Dt^2 u,\phi_{\eb,x_0}\Dt v)\leq\\
Ct\|(-\Dx+1)^{-\frac{1}{4}}\left(\phi_{\eb,x_0}\Dt^2 u\right)\|_{L^2(\Omega)}\|\phi_{\eb,x_0}(-\Dx+1)^\frac{1}{4}\Dt v\|_{L^2(\Omega)}\leq\\
 \frac{t^2\delta}{2}\|\phi_{\eb,x_0}(-\Dx+1)^\frac{1}{4}\Dt v\|^2_{L^2(\Omega)}+C_\delta\|(-\Dx+1)^{-\frac{1}{4}}\left(\phi_{\eb,x_0}\Dt^2 u\right)\|^2_{L^2(\Omega)},
\end{multline}
where the term $\|(-\Dx+1)^{-\frac{1}{4}}\left(\phi_{\eb,x_0}\Dt^2 u\right)\|^2_{L^2(\Omega)}$ is not dangerous by Corollary \ref{cor.utt-sm}. 

The two last terms can be transformed as follows
\begin{equation}
\label{5.11}
2t\(\lambda_0\|\psi_{x_0}v\|^2_{L^2(\Omega)}+\|\psi_{x_0}\nabla v\|^2_{L^2(\Omega)}\)=
2\frac{d}{dt}\Big( \lambda_0t(\psi_{x_0}u,\psi_{x_0}v)+t(\psi_{x_0}\nabla u,\psi_{x_0}\nabla v) \Big)-2R,
\end{equation}
where $R$ is given by
\begin{equation}
R=\lambda_0(\psi_{x_0}u,\psi_{x_0}v)+(\psi_{x_0}\nabla u,\psi_{x_0}\nabla v)+\\
\lambda_0t(\psi_{x_0}u,\psi_{x_0}\Dt v)+t(\psi_{x_0}\nabla u,\psi_{x_0}\nabla \Dt v).
\end{equation}
Performing integration by parts in the previous formula and using commutator estimates it is easy to check the next inequality (at this point $\Cal H^\frac{3}{2}_{\{\eb,x_0\}}(\Omega)$-norm, given by Theorem \ref{th adreg2}, arises)
\begin{multline}
\label{5.12}
|R|\leq \frac{t^2\delta}{2}\|\phi_{\eb,x_0}(-\Dx+1)^\frac{1}{4}\Dt v\|^2_{L^2(\Omega)}+\\
C_\delta\(\|\phi_{\eb,x_0}(-\Dx+1)^\frac{3}{4}u\|^2_{L^2(\Omega)}+\|\phi_{\eb,x_0}(-\Dx+1)^\frac{1}{4}\Dt u\|^2_{L^2(\Omega)}\).
\end{multline}

Thus summarizing \eqref{5.9},\eqref{5.10}, \eqref{5.11} and \eqref{5.12} we get
\begin{multline}
\label{5.13}
\frac{d}{dt}\left(t^2E_{x_0}(\xi_v)- 2\lambda_0t(\psi_{x_0}u,\psi_{x_0}v)-2t(\psi_{x_0}\nabla u,\psi_{x_0}\nabla v)\right)+2\gamma t^2\|(-\Dx+1)^\frac{1}{4}\Dt v\|^2_{L^2(\Omega\cap B^1_{x_0})}\leq\\
C\left(1+\|u\|^4_{L^{12}(\Omega\cap B^2_{x_0})}\right)t^2E_{x_0}(\xi_v)+2\delta t^2\|\phi_{\eb,x_0}(-\Dx+1)^\frac{1}{4}\Dt v\|^2_{L^2(\Omega)}+\\
 C_\delta\(\|\phi_{\eb,x_0}(-\Dx+1)^\frac{3}{4}u\|^2_{L^2(\Omega)}+\|\phi_{\eb,x_0}(-\Dx+1)^\frac{1}{4}\Dt u\|^2_{L^2(\Omega)}+\|(-\Dx+1)^{-\frac{1}{4}}\left(\phi_{\eb,x_0}\Dt^2 u\right)\|^2_{L^2(\Omega)}\).
\end{multline}
Taking into account that
\begin{equation}
|2\lambda_0t(\psi_{x_0}u,\psi_{x_0}v)+2t(\psi_{x_0}\nabla u,\psi_{x_0}\nabla v)|\leq \frac{t^2}{2} E_{x_0}(\xi_v)+C\|\xi_u\|^2_{\Elr},
\end{equation} 
integrating \eqref{5.13} from $0$ to $t\in(0,1]$, using energy estimate \eqref{est en}, and extra regularity \eqref{est adreg2}, \eqref{est.utt-sm} we obtain
\begin{multline}\label{5.14}
t^2E_{x_0}(\xi_v(t))+4\gamma\int_0^ts^2\|(-\Dx+1)^\frac{1}{4}\Dt v(s)\|^2_{L^2(\Omega\cap B^1_{x_0})}ds\leq\\
C\int_0^t\left(1+\|u(s)\|^4_{L^{12}(\Omega\cap B^2_{x_0})}\right)s^2E_{x_0}(\xi_v(s))ds+4\delta \int_0^ ts^2\|\phi_{\eb,x_0}(-\Dx+1)^\frac{1}{4}\Dt v(s)\|^2_{L^2(\Omega)}ds+\\
C_\delta\(Q_{\eb}(\|\xi_0\|_{\E_b})+Q_{\eb}(\|g\|_{L^2_b(\Omega)})\),\ \rm{for\ all}\ t\in(0,1].
\end{multline}
Applying Gronwall's lemma in integral form to \eqref{5.14} we obtain
\begin{multline}\label{5.15}
t^2E_{x_0}(\xi_v(t))+4\gamma\int_0^ts^2\|(-\Dx+1)^\frac{1}{4}\Dt v(s)\|^2_{L^2(\Omega\cap B^1_{x_0})}ds\leq\\ 
B_1\(4\delta \int_0^ ts^2\|\phi_{\eb,x_0}(-\Dx+1)^\frac{1}{4}\Dt v(s)\|^2_{L^2(\Omega)}ds+
C_\delta \(Q_{\eb}(\|\xi_0\|_{\E_b})+Q_{\eb}(\|g\|_{L^2_b(\Omega)})\)\),\ t\in(0,1], 
\end{multline}
where 
\begin{equation}
B_1=C\sup_{x_0\in\Omega}\int_0^1\(1+\|u(s)\|^4_{L^{12}(\Omega\cap B^2_{x_0})}\)ds.
\end{equation}
Multiplying the estimate \eqref{5.15} by $\phi^2_{\mu,x_0}$ with $\mu\in(0,\eb)$, integrating the obtained inequality with respect to $x_0$ over $\Omega$, applying Theorem \ref{th wsp2} on the left and Theorem \ref{th wsp1} on the right, choosing $\delta=\delta(B_1)$ small enough we find
\begin{equation}
t^2\|\xi_v(t)\|^2_{\E_{\{\mu,x_0\}}}\leq Q(B_1)(Q(\|\xi_0\|_{\E_b})+Q(\|g\|_{L^2_b(\Omega)})), t\in(0,1],
\end{equation}
for some increasing monotone $Q$. Taking supremum with respect to $x_0$ in the last estimate, using Corollary \ref{cor wsp4} and Theorem \ref{th adreg1} we finish the proof.  
\end{proof}

The next corollary improves estimate \eqref{est sm1} to its dissipative version.
\begin{cor}
\label{cor sm3}
Let the assumptions of Theorem \ref{th ex} hold. And let $u$ be an infinite-energy solution of problem \eqref{eq main} with initial data $\xi_0\in\E^1_b$. Then $u$ admits the following estimate
\begin{equation}
\label{est sm3}
\|\xi_u(t)\|_{\E^1_b}+\|\xi_{\Dt u}(t)\|_{\E_b}\leq Q(\|\xi_0\|_{\E^1_b})e^{-\beta t}+Q(\|g\|_{L^2_b(\Omega)}),\quad t\geq 0,
\end{equation} 
for some $\beta>0$ and increasing function $Q$ which are independent of $t$ and $u$.
\end{cor}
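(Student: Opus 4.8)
The plan is to upgrade the divergent-in-time estimate \eqref{est sm1} of Theorem \ref{th sm1} to the dissipative estimate \eqref{est sm3} by combining the smoothing estimate \eqref{est sm2} of Theorem \ref{th sm2} with the already-dissipative energy estimate \eqref{est en}, using a standard time-shift (cocycle / semigroup) argument. The point is that Theorem \ref{th sm2} gives control of the stronger norm at time $t$ by the \emph{weaker} $\E_b$-norm of the data at time $0$, but only on the bounded interval $(0,1]$ and with a loss of $t^{-2}$ near $t=0$; this loss is harmless once we are allowed to start from a time bounded away from $0$.

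First I would note that for $\xi_0\in\E^1_b$ the solution satisfies the dissipative energy estimate \eqref{est en}, so in particular there is a constant $\beta>0$ and a monotone function $Q$ with $\|\xi_u(t)\|^2_{\E_b}\le Q(\|\xi_0\|_{\E_b})e^{-\beta t}+Q(\|g\|_{L^2_b(\Omega)})$ for all $t\ge 0$; since $\|\xi_0\|_{\E_b}\le C\|\xi_0\|_{\E^1_b}$ this is already of the desired form in the weaker norm. Next, fix $t\ge 1$ and apply Theorem \ref{th sm2} on the unit interval $[t-1,t]$, i.e. to the solution shifted so that its initial time becomes $t-1$: since problem \eqref{eq main} is autonomous, $u(t-1+\cdot)$ is again an infinite-energy solution with data $\xi_u(t-1)\in\E_b$, and \eqref{est sm2} evaluated at the endpoint $s=1$ of that shifted interval yields
\begin{equation}
\label{plan.shift}
\|\xi_{\Dt u}(t)\|^2_{\E_b}+\|\xi_u(t)\|^2_{\E^1_b}\le Q(\|\xi_u(t-1)\|_{\E_b})+Q(\|g\|_{L^2_b(\Omega)}),\qquad t\ge 1.
\end{equation}
Inserting the dissipative bound for $\|\xi_u(t-1)\|_{\E_b}$ from \eqref{est en} into the right-hand side of \eqref{plan.shift}, and adjusting $\beta$ (replacing it by a smaller positive constant to absorb the shift by $1$ and the composition with the monotone $Q$, which one handles by splitting $Q$ into a linear-growth part near $0$ and using $Q(ae^{-\beta(t-1)}+b)\le Q(2ae^{\beta}e^{-\beta t})+Q(2b)$ and then $\sqrt{Q(\,\cdot\,e^{-\beta t}\,)}\le \widetilde Q(\,\cdot\,)e^{-\beta' t}$ on the exponentially decaying part), gives \eqref{est sm3} for $t\ge 1$.

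It remains to cover the initial layer $t\in[0,1]$. Here I would simply invoke Theorem \ref{th sm1}, which on any finite interval — in particular on $[0,1]$ — bounds $\|\xi_u(t)\|^2_{\E^1_b}+\|\xi_{\Dt u}(t)\|^2_{\E_b}$ by $Q_1(\|\xi_0\|_{\E^1_b},1)+Q_2(\|g\|_{b,0,2},1)$; since $e^{-\beta t}\ge e^{-\beta}$ on $[0,1]$, this bound is absorbed into the form $Q(\|\xi_0\|_{\E^1_b})e^{-\beta t}+Q(\|g\|_{L^2_b(\Omega)})$ after enlarging $Q$. Concatenating the two ranges finishes the proof. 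The only genuinely delicate point — and hence the place where I would be most careful — is the bookkeeping with the monotone nonlinear functions $Q$ when transferring the exponential decay through \eqref{plan.shift}: one must make sure that the decaying data term really produces a decaying output (not just a bounded one), which is why one separates the argument of $Q$ into its part tending to zero, on which $Q$ is comparable to a linear function near the origin, from the part depending only on $\|g\|_{L^2_b(\Omega)}$; everything else is routine, and the autonomy of \eqref{eq main} together with uniqueness from Theorem \ref{th uniq cont} legitimizes the time shift.
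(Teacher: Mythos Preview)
Your proposal is correct and follows essentially the same approach as the paper: apply the smoothing estimate of Theorem~\ref{th sm2} on a shifted unit interval to bound the strong norm at time $t\ge 1$ by $Q(\|\xi_u(t-1)\|_{\E_b})+Q(\|g\|_{L^2_b(\Omega)})$, feed in the dissipative energy estimate~\eqref{est en}, and cover the initial layer $[0,1]$ by Theorem~\ref{th sm1}. The only difference is that the paper isolates the step you call ``genuinely delicate'' --- passing exponential decay through the monotone nonlinear $Q$ --- into a separate technical lemma (Lemma~\ref{lem Q(Ae-t+B)<Q(A)e-t+Q(B)}, stating $Q(L_1+L_2e^{-\alpha t})\le Q_1(L_1)+Q_1(L_2)e^{-\alpha t}$ for smooth $Q$), whereas you sketch this inline; your sketch is adequate, though note that the constant $Q(0)$, if nonzero, must be absorbed into the $g$-dependent term rather than forced to decay.
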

\begin{proof}
Indeed, by virtue of Theorem \ref{th sm2} we have
\begin{equation}
\|\xi_u(t+1)\|_{\E^1_b}+\|\xi_{\Dt u}(t+1)\|_{\E_b}\leq Q(\|\xi_u(t)\|_{\E_b})+Q(\|g\|_{b,0,2}),\quad t\geq 0,
\end{equation}
that together with dissipative estimate \eqref{est en} and technical Lemma \ref{lem Q(Ae-t+B)<Q(A)e-t+Q(B)} (see below) gives the desired estimate \eqref{est sm3} for $t\geq 1$. Also, obviously, Theorem \ref{th sm1} gives estimate \eqref{est sm3} on segment $[0,1]$, so the proof is compete.
\end{proof}

The following lemma, used in Corollary \ref{cor sm3}, was originally proved in 
\cite{vz96}.
\begin{lemma}
\label{lem Q(Ae-t+B)<Q(A)e-t+Q(B)}
Let $Q:\R_+\to \R_+$ be a smooth function, $L_1,L_2\in\R_+$ and $\alpha>0$. Then there exists a monotone increasing function $Q_1:\R_+\to\R_+$ such that
\begin{equation}
Q(L_1+L_2e^{-\alpha t})\leq Q_1(L_1)+Q_1(L_2)e^{-\alpha t}.
\end{equation}
\end{lemma}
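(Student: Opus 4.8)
The plan is to reduce to the case in which $Q$ is monotone increasing and then to split the half-line $t\in[0,\infty)$ into two regimes according to whether the decaying term $L_2e^{-\alpha t}$ is large or small compared with $L_1+1$. First I would replace $Q$ by its monotone envelope $\bar Q(s):=\sup_{0\le r\le s}Q(r)$; since $Q$ is continuous (smoothness is not actually needed here), $\bar Q$ is finite, non-decreasing and satisfies $Q\le\bar Q$, so it suffices to bound $\bar Q(L_1+L_2e^{-\alpha t})$. The function I would try is
\[
Q_1(s):=(1+s)\,\bar Q(2s+1),
\]
which is manifestly monotone increasing and maps $\R_+$ into $\R_+$; the proof then reduces to verifying the desired inequality on the two regimes, with \emph{this same} $Q_1$.

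In the regime $L_2e^{-\alpha t}<L_1+1$ one has $L_1+L_2e^{-\alpha t}<2L_1+1$, hence
\[
Q(L_1+L_2e^{-\alpha t})\le\bar Q(2L_1+1)\le(1+L_1)\bar Q(2L_1+1)=Q_1(L_1)\le Q_1(L_1)+Q_1(L_2)e^{-\alpha t},
\]
the last step because the added term is non-negative. In the complementary regime $L_2e^{-\alpha t}\ge L_1+1$ one has $L_2>0$ and $e^{-\alpha t}\ge(L_1+1)/L_2\ge 1/L_2$, while also $L_1<L_2e^{-\alpha t}$, so $L_1+L_2e^{-\alpha t}<2L_2e^{-\alpha t}\le 2L_2$ and therefore $Q(L_1+L_2e^{-\alpha t})\le\bar Q(2L_2e^{-\alpha t})\le\bar Q(2L_2+1)$; on the other hand $Q_1(L_2)e^{-\alpha t}\ge(1+L_2)\bar Q(2L_2+1)/L_2\ge\bar Q(2L_2+1)$, which again yields $Q(L_1+L_2e^{-\alpha t})\le Q_1(L_1)+Q_1(L_2)e^{-\alpha t}$. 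Since every $t\ge0$ lies in one of the two regimes, the claimed bound follows.

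The calculation itself is routine; the only point that requires care — and the reason the lemma is not completely trivial — is that $Q_1$ must depend on $Q$ and $\alpha$ only, and in particular nothing may blow up in the sub-regime where $e^{-\alpha t}$ is arbitrarily small. This is precisely why the case split is made at the crossover $L_2e^{-\alpha t}\sim L_1$: once $L_2e^{-\alpha t}$ drops below $L_1+1$ the argument of $Q$ is already controlled by $L_1$ alone, so the small decaying term on the right is no longer needed, whereas for the earlier times one exploits the lower bound $e^{-\alpha t}\ge 1/L_2$ to absorb $\bar Q(2L_2+1)$ into $Q_1(L_2)e^{-\alpha t}$. If one is willing to assume from the outset that $Q$ is monotone increasing — as it is in all the applications in this paper — the envelope step is unnecessary and one may simply take $Q_1(s)=(1+s)Q(2s+1)$.
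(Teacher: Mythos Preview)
Your proof is correct. The paper does not actually supply a proof of this lemma; it only cites \cite{vz96} and uses the result as a black box, so your argument goes beyond what the paper provides. Your case split at the crossover $L_2e^{-\alpha t}\sim L_1+1$ together with the explicit choice $Q_1(s)=(1+s)\bar Q(2s+1)$ is a clean, self-contained way to obtain the bound, and in fact your $Q_1$ does not even depend on $\alpha$ (contrary to your own cautionary remark), which is a small bonus.
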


\section{The Attractors}
\label{s.a}
In this section we establish existence of locally compact attractor for the natural dynamical system $(S_t,\E_b)$ associated with problem \eqref{eq main}:
\begin{equation}
\label{def St}
S_t:\E_b\to\E_b,\ \quad S_t\xi_0=\xi_u(t),  
\end{equation}
where $\xi_0\in\E_b$ and $\xi_u(t)$ is infinite-energy solution of problem \eqref{eq main} with initial data $\xi_0$. Locally compact attractors play central
role in capturing long-time behaviour of PDEs in unbounded domains. These objects are natural counterparts of global attractors which are commonly used in studying asymptotic behaviour of PDEs in bounded domains (see \cite{bk BV}).

Let us recall the definition of locally compact attractor (see\cite{MZ Dafer 2008}) 
\begin{Def}
\label{def A}
A set $\A\subset \E_b$ to be called locally compact attractor (further just attractor for brevity) of the dynamical system $(S_t,\E_b)$ in $\E_b$ iff
\par
(i) The set $\A$ is bounded in $\E_b$ and compact in $\E_{loc}:=H^1_{loc}(\Omega)\times L^2_{loc}(\Omega)$.
\par
(ii) The set $\A$ is strictly invariant, that is $S_t\A=\A$, for all $t\geq 0$.
\par
(iii) The  set $\A$ uniformly attracts any bounded subset of $\E_b$ in topology of $\E_{loc}$. That is, for any neighbourhood $\Cal{O}(\A)$ of $\A$ in topology of $\E_{loc}$ and for every bounded subset $B$ in topology of $\E_b$, there exists $T=T(\Cal{O}(\A),B)$ such that
\begin{equation}
S_tB\subset\Cal{O}(A),\quad t\geq T.
\end{equation}
\end{Def}
We remind that condition $(i)$ means that $\A|_{\Omega_1}$ is compact in $H^1(\Omega_1)\times L^2(\Omega_1)$ for any bounded domain $\Omega_1\subset \Omega$. Similarly, condition $(iii)$ means that for every bounded set $B$ in $\E_b$ and every $H^1(\Omega_1)\times L^2(\Omega_1)$-neighbourhood $\Cal{O}(\A|_{\Omega_1})$ of the restriction $\A|_{\Omega_1}$ there exists $T=T(\Omega_1,\Cal{O},B)$ such that:
\begin{equation}
(S_tB)|_{\Omega_1}\subset\Cal{O}(\A|_{\Omega_1}),\ t\geq T,
\end{equation}
where, again, $\Omega_1$ is an arbitrary bounded domain of $\Omega$. Also we would like to point out that definition \eqref{def A} coincides with definition of $(\E_b,\E_{loc})$-attractor in terminology of \cite{bk BV}. 
\begin{rem}
One may ask why we require compactness of  $\A$ as well as well as attraction property in the space $\E_{loc}$ and not in $\E_b$. In \cite{Z entropy} there was constructed an example that shows that, as a rule, when we consider dynamical system generated by a partial differential equation in unbounded domain then attractor can not be a compact subset of $\E_b$. Furthermore, even when attractor is a compact set of $\E_b$ it is too optimistic to expect attraction property in $\E_b$. The corresponding counter-example is constructed in \cite{Z mps}. One more key difference between the case of locally compact attractors in unbounded domains from attractors in bounded domains is that they usually have infinite fractal dimension. Example illustrating the nature of this fact can be found in \cite{MZ Dafer 2008}.
\end{rem}

The next theorem establishes existence of locally compact attractor for problem \eqref{eq main}. 
\begin{theorem}
\label{th attr}
Let assumptions of Theorem \ref{th ex} be satisfied. Then dynamical system $(S_t,\E_b)$ defined by \eqref{def St} possesses a locally compact attractor $\A$ in $\E_b$ which is a bounded subset of $\E^1_b$ can be described as
\begin{equation}
\label{A=K(0)}
\A=\Cal K|_{t=0},
\end{equation}
where $\Cal K$ is the set of all complete bounded in $\E_b$ solutions of \eqref{eq main}.
\end{theorem}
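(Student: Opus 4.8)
The plan is to follow the standard scheme for constructing locally compact attractors of dissipative dynamical systems in uniformly local spaces, assembling the ingredients already established in the paper: well-posedness (Theorems~\ref{th ex}, \ref{th uniq cont}), the dissipative estimate \eqref{est en}, the extra space-time regularity \eqref{est adreg1}, \eqref{est adreg2}, and the smoothing estimates of Section~\ref{s.sm} (Theorem~\ref{th sm2} and Corollary~\ref{cor sm3}). First I would verify that $(S_t,\E_b)$ is a well-defined semigroup: single-valuedness and the semigroup identity $S_{t+s}=S_t\circ S_s$ follow from uniqueness (Theorem~\ref{th uniq cont}), and continuity of the map $(t,\xi_0)\mapsto S_t\xi_0$ into $\E_{loc}$ follows from the continuous dependence estimate \eqref{cont.dep.id} together with the fact that $\xi_u\in C^w([0,T];\Ee)\cap C([0,T];\E^{-1}_{\{\eb,x_0\}})$; in particular $t\mapsto S_t\xi_0$ is continuous in $\E_{loc}$.

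Next I would exhibit a bounded absorbing set. By the dissipative estimate \eqref{est en}, the ball $\mathcal{B}_0=\{\xi\in\E_b:\ \|\xi\|^2_{\E_b}\le 2Q_{\eb}(\|g\|_{L^2_b(\Omega)})+1\}$ is absorbing: for any bounded $B\subset\E_b$ there is $T(B)$ with $S_tB\subset\mathcal{B}_0$ for $t\ge T(B)$. Then I would upgrade this to a \emph{compact} (in $\E_{loc}$) absorbing set by the smoothing property: by Corollary~\ref{cor sm3}, starting from $\mathcal{B}_0$ the solutions enter, after one more unit of time, a bounded set $\mathcal{B}_1$ in $\E^1_b=H^2_b\times H^1_b$. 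Since for any bounded domain $\Omega_1\subset\Omega$ the embedding $H^2(\Omega_1)\times H^1(\Omega_1)\hookrightarrow H^1(\Omega_1)\times L^2(\Omega_1)$ is compact, $\mathcal{B}_1$ is precompact in $\E_{loc}$; its closure $\mathcal{B}$ in $\E_{loc}$ is the desired compact absorbing set, still bounded in $\E^1_b$ (the $\E^1_b$-bound is preserved because bounded sets of $H^2_b\times H^1_b$ are weakly closed and the $\E_{loc}$-limit coincides with the weak limit). One also needs $S_t\mathcal{B}\subset\mathcal{B}$ for $t$ large, which again follows from Corollary~\ref{cor sm3}.

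Having a compact absorbing set $\mathcal{B}$ and a semigroup that is continuous in $\E_{loc}$, I would then invoke the abstract attractor existence theorem for the pair of topologies $(\E_b,\E_{loc})$ (as in \cite{bk BV}, \cite{MZ Dafer 2008}): the $\omega$-limit set
\[
\A=\omega(\mathcal{B})=\bigcap_{s\ge 0}\overline{\bigcup_{t\ge s}S_t\mathcal{B}}^{\,\E_{loc}}
\]
is nonempty, compact in $\E_{loc}$, bounded in $\E_b$ (indeed in $\E^1_b$, since $\mathcal B$ is bounded in $\E^1_b$ and this bound is inherited by the $\E_{loc}$-closed $\omega$-limit set via weak compactness of balls in $H^2_b\times H^1_b$), strictly invariant $S_t\A=\A$, and attracts every bounded subset of $\E_b$ in the topology of $\E_{loc}$. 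The only point requiring a little care here is the \emph{closedness of the graph} / asymptotic compactness needed to run the abstract theorem: this is supplied precisely by the compact absorbing set $\mathcal B$ together with the $\E_{loc}$-continuity of $S_t$, so no separate asymptotic compactness argument is needed. Finally, the representation \eqref{A=K(0)} follows in the usual way: any $\xi_0\in\A$ lies on a complete trajectory bounded in $\E_b$ by strict invariance (pull back through $S_t\A=\A$ to build a bounded complete solution), so $\A\subset\Cal K|_{t=0}$; conversely any complete bounded-in-$\E_b$ solution enters $\mathcal{B}$ for all sufficiently negative times and hence its value at $t=0$ belongs to $\omega(\mathcal B)=\A$, giving $\Cal K|_{t=0}\subset\A$.

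The main obstacle, and the step deserving the most attention, is the transfer of the smoothing estimate \eqref{est sm2} into genuine asymptotic compactness in $\E_{loc}$ uniformly over the uniformly local space: one must check that the $\E^1_b$-bound produced by Theorem~\ref{th sm2}/Corollary~\ref{cor sm3} is uniform with respect to the center $x_0$ of the localization (it is, because all the estimates in Section~\ref{s.sm} are taken with $\sup_{x_0}$), and that passing to $\E_{loc}$-limits does not destroy this uniform bound. Modulo this bookkeeping, the argument is the standard $(\E_b,\E_{loc})$-attractor construction and everything else is routine.
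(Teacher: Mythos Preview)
Your proposal is correct and follows essentially the same route as the paper: construct a compact-in-$\E_{loc}$ absorbing set bounded in $\E^1_b$ via the smoothing results of Section~\ref{s.sm}, check $\E_{loc}$-continuity of $S_t$ through Theorem~\ref{th uniq cont}, and invoke the abstract $(\E_b,\E_{loc})$-attractor existence theorem of \cite{bk BV}. One minor slip: the step that takes the $\E_b$-absorbing ball $\mathcal B_0$ into a bounded subset of $\E^1_b$ after unit time is furnished by Theorem~\ref{th sm2} (smoothing from $\E_b$ data), not Corollary~\ref{cor sm3} (which already assumes $\E^1_b$ initial data); the paper combines both, and with this correction your argument matches it.
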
 
\begin{proof}
According to abstract attractor's existence theorem (see \cite{bk BV}) we need to
check two points:
\par
$(i)$ dynamical system $(S_t,\E_b)$ possesses a bounded in $\E_b$ absorbing set $\Cal B_0$ which is compact in $\E_{loc}$. We remind that $\Cal B_0$ is absorbing means that for any bounded set $B\subset \E_b$ there exists time $T_B$ such that for all $t\geq T_B$ we have $S_tB\subset \Cal B_0$. 
\par
$(ii)$ for every fixed $t\geq 0$ the evolutionary operator $S_t$ is continuous on $\Cal B_0$ in topology of $\E_{loc}$.

The desired absorbing set can be constructed by standard procedure as follows.
From Theorem \ref{th sm2} and Corollary \ref{cor sm3} one can easily see
that closed ball $B^R_0$ in $\E^1_b$ with large enough radius $R$ is an absorbing ball. Since the embedding $\E^1_b\subset \E_{loc}$ is compact then $B^R_0$ is compact in $\E_{loc}$. Thus we can take $\Cal B_0=B^R_0$.

$\E_{loc}$-Continuity of $S_t$ on $\Cal B_0$ is a direct consequence of the continuity of $S_t$ in topology of $\E_b$ (see Theorem \ref{th uniq cont}).

Thus points $(i)$ and $(ii)$ are checked and hence the attractor exists. The boundedness of the attractor in $\E^1_b$ follows from the fact that $\Cal B_0$ is bounded in $\E^1_b$ by construction.

Finally representation \eqref{A=K(0)} is classical. Obviously, elements of $\A$ generate complete bounded in $\E_b$ trajectories, hence $\A\subset \Cal K|_{t=0}$. On the other hand, any complete \emph{bounded} in $\E_b$ trajectory $\Gamma=\{\xi_u(t)\}_{t\in\R}$ of the dynamical system $(S_t,\E_b)$ (see \eqref{def St}) gets into arbitrary small neighbourhood $\Cal O(\A)$ in $\E_{loc}$ of $\A$. But since trajectory $\Gamma$ is complete it is invariant, that is, the whole trajectory itself is in $\Cal O(\A)$. In addition, since $\A$ is $\E_{loc}$-closed we get that $\Gamma\subset \A$ and in particular $\Gamma|_{t=0}\subset \A$.
\end{proof}
\section{Appendix}\label{sec.Appendix}
The proof of Theorem \ref{th ex} is based on the following Gronwall-type lemma. In turn, the proof of the lemma rests on ideas presented in \cite{JZ,Z NSR2,G.NS14} (see also \cite{AZ NStrp,Zstr2007} for slightly more abstract variant of this approach and \cite{GPZ}, where the same idea is illustrated in a simpler case).
\begin{lemma}\label{lem.gr}
Let $y_{\sigma}(t),\ Y_{\sigma}(t):\R_+\to\R$ be two positive absolutely continuous functions depending on the parameter $\sigma\in\Sigma$ (here $\Sigma$ is an abstract non-empty set), satisfying the following differential inequality
\begin{equation}
\label{10.grineq}
\frac{d}{dt}Y_\sigma+\kappa\left(y_\sigma\right)^\frac{1}{p}\leq H,\ t\geq 0,
\end{equation}
with some constants $\kappa,\ H>0$ and $p\geq 1$ independent of $\sigma$. Let, in addition, $Y_\sigma$ and $y_\sigma$ be such that
\begin{align}
\label{10.equiv}
& \lambda\sup_{\sigma\in\Sigma} Y_\sigma(t)\leq\sup_{\sigma\in\Sigma} y_\sigma(t)\leq \Lambda\sup_{\sigma\in\Sigma} Y_\sigma(t),\ &\mbox{{\rm uniformly} w.r.t. }t\geq0;\\
\label{10.int.equiv}
& \lambda\sup_{\sigma\in\Sigma}\int_s^tY_\sigma(\tau)^\frac{1}{p}d\tau\leq\sup_{\sigma\in\Sigma}\int_s^ty_\sigma(\tau)^\frac{1}{p}d\tau\leq \Lambda\sup_{\sigma\in\Sigma}\int_s^tY_\sigma(\tau)^\frac{1}{p}d\tau, &\mbox{{\rm uniformly} w.r.t. }0\leq s<t;
\end{align}
for some strictly positive constants $\lambda,\ \Lambda$. Then the following estimate holds 
\begin{equation}
\label{10.a.disest}
Y(t)\leq Q(Y(0))e^{-\beta t}+Q(H),\ t\geq 0, 
\end{equation}
for some constant $\beta>0$ and increasing function $Q$, where $Y(t)=\sup_{\sigma\in\Sigma}Y_\sigma(t)$.
\end{lemma}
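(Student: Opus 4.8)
The plan is to combine the differential inequality \eqref{10.grineq} with the two equivalence assumptions \eqref{10.equiv}--\eqref{10.int.equiv} to close a Gronwall-type argument for the single scalar quantity $Y(t)=\sup_{\sigma\in\Sigma}Y_\sigma(t)$, despite the fact that the differential inequality holds only \emph{pointwise in $\sigma$} and the supremum over $\sigma$ of a family of absolutely continuous functions need not be differentiable. The first step is to pass from the differential inequality to its integral form: for each fixed $\sigma$ and all $0\le s<t$,
\begin{equation}
Y_\sigma(t)+\kappa\int_s^t\bigl(y_\sigma(\tau)\bigr)^{1/p}d\tau\leq Y_\sigma(s)+H(t-s).
\end{equation}
Taking $\sup_{\sigma}$ on the left and using \eqref{10.int.equiv} on the integral term, together with $Y_\sigma(s)\le Y(s)$, one gets an inequality of the form
\begin{equation}
\label{pp.intineq}
Y(t)+\kappa\lambda\sup_{\sigma\in\Sigma}\int_s^t\bigl(Y_\sigma(\tau)\bigr)^{1/p}d\tau\leq Y(s)+H(t-s),\qquad 0\le s<t.
\end{equation}

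Next I would work on a fixed unit time-window, say $s=m$, $t=m+1$ for $m\in\N$, and extract from \eqref{pp.intineq} the fact that $\inf_{\tau\in[m,m+1]}\sup_\sigma(Y_\sigma(\tau))^{1/p}$ is controlled by $Y(m)+H$; hence there is a time $t_m\in[m,m+1]$ with $Y(t_m)^{1/p}\leq C(Y(m)+H)$, i.e. $Y(t_m)\leq C'(Y(m)^p+H^p)$ — no improvement yet, but combined with \eqref{10.equiv} and the monotone-in-$s$ structure of \eqref{pp.intineq} (taking $t=m+1$, $s=t_m$) one sees $Y(m+1)\le Y(t_m)+H\le Q(Y(m))+Q(H)$ for a suitable increasing $Q$. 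The real dissipation has to come from the lower bound $\sup_\sigma\int_s^t(Y_\sigma)^{1/p}\ge(\text{something})\,\inf_{[s,t]}Y$ which, via \eqref{10.equiv}, lets us replace $\inf$ by a definite fraction of $Y(t)$ on the window after shrinking it; the standard trick here (as in the cited references) is: if $Y$ did \emph{not} decay, i.e. $Y(\tau)\ge\eta Y(s)$ for all $\tau\in[s,t]$, then the integral term in \eqref{pp.intineq} is $\gtrsim (t-s)(\eta Y(s))^{1/p}$, and for $Y(s)$ large this forces $(t-s)Y(s)^{1/p}\lesssim Y(s)+H$, a contradiction once $Y(s)$ exceeds a threshold depending on $H$; so $Y$ must drop by a fixed factor on each unit window as long as it stays above that threshold. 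Iterating over $m=0,1,2,\dots$ gives geometric decay $Y(m)\le Q(Y(0))2^{-m}+Q(H)$, and filling the gaps between integers with the crude bound $Y(t)\le Q(Y(m))+Q(H)$ for $t\in[m,m+1]$ yields \eqref{10.a.disest} after renaming constants and absorbing them into $\beta$ and $Q$.

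The main obstacle I anticipate is the bookkeeping around the nonlinearity $y\mapsto y^{1/p}$ with $p\ge1$: because this is concave (sublinear), the "if $Y$ stays large, the integral is large" mechanism is weakest precisely when $Y$ is large, so the dichotomy argument has to be set up so that the gain per window is a genuine \emph{multiplicative} factor rather than an additive one, which is exactly what makes the final estimate exponential rather than merely $Y(t)\to Q(H)$ at an algebraic rate. Making this quantitative — choosing the contradiction threshold, the decay factor $\eta$, and checking that $Q$ can be taken independent of $\sigma$, $t$ and the particular functions — is where care is needed; the uniformity in $\sigma$ is handled entirely by \eqref{10.equiv} and \eqref{10.int.equiv}, which is why those hypotheses are stated with suprema inside. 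Everything else (passing to integral form, summing a geometric series, the Gronwall-type lemma in integral form for the gap-filling step) is routine.
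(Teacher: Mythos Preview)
There are two genuine gaps, both in the dichotomy-on-unit-windows step.

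First, the contradiction you set up never occurs. Even granting a clean scalar inequality for $Y(t)=\sup_\sigma Y_\sigma(t)$, suppose $Y(\tau)\ge\eta Y(s)$ for all $\tau\in[s,s+1]$. The integral term is then of order $Y(s)^{1/p}$, and the resulting constraint reads $c\,Y(s)^{1/p}\lesssim Y(s)+H$. For $p>1$ this is \emph{satisfied}, not violated, when $Y(s)$ is large: the sublinear dissipation $\kappa y^{1/p}$ is too weak to force a multiplicative drop over a window of fixed length. (The scalar ODE $Y'=-\kappa Y^{1/p}$ decays only polynomially, losing $O(Y^{1/p})$ per unit time --- a vanishing \emph{fraction} of $Y$ as $Y\to\infty$.) You flag this as the ``main obstacle'' but do not resolve it; the paper's proof does so by taking \emph{adaptive} time steps $T_k-T_{k-1}\sim M(k-1)^{p(p-1)}$ that grow with the current size of the solution. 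On such windows the averaged quantity $W(k)=\bigl(V(k)/(T_{k+1}-T_k)\bigr)^{1/p}$, with $V(k)=\sup_\sigma\int_{T_k}^{T_{k+1}}Y_\sigma^{1/p}d\tau$, satisfies a genuine contraction $W(k)\le\tfrac12 W(k-1)+CH^{1/p}$, and the $T_k$ are chosen a posteriori so that this holds.

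Second, the passage from a bound on $\sup_\sigma\int_s^t Y_\sigma^{1/p}$ to a bound on $\inf_{\tau\in[s,t]}Y(\tau)$ is unavailable: one only has $\sup_\sigma\int\le\int\sup_\sigma$, which is the wrong direction, and the time at which $Y_\sigma$ is small depends on $\sigma$, so no single $t_m$ with $Y(t_m)$ small need exist. The paper never returns to a pointwise-in-$t$ value of $Y$ during the iteration. Instead, after taking the $\tfrac1p$-th root of the integrated inequality, it integrates \emph{once more in the variable $s$} over $[T_{k-1},T_k]$, converting the right-hand term $Y_\sigma(s)^{1/p}$ into $\int_{T_{k-1}}^{T_k}Y_\sigma^{1/p}ds$; now both sides are of the same type, $\sup_\sigma$ can be taken, and assumption \eqref{10.int.equiv} closes the recursion for $W(k)$.
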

\begin{proof}
Integrating inequality \eqref{10.grineq} from $s$ to $t$ and taking $\frac{1}{p}$-root of both sides of the obtained inequality we have
\begin{equation}
\label{10.4}
Y_\sigma(t)^\frac{1}{p}+
\kappa^\frac{1}{p}\left(\int_s^ty_\sigma(\tau)^\frac{1}{p}\,d\tau\right)^\frac{1}{p}\leq 2Y_\sigma(s)^\frac{1}{p}+2H^\frac{1}{p}(t-s)^\frac{1}{p},\qquad {\rm for\ all\ } 0\leq s<t.
\end{equation}
Let $\{T_k\}_{k=0}^\infty$ be an increasing sequence of times going to $+\infty$ with $T_0=0$. We will specify this sequence below. Setting $t=T_{k+1}$, integrating the inequality \eqref{app.4} with respect to $s$ from $T_{k-1}$ to $T_k$, and skipping the first term on the left hand side we find 
\begin{multline}
\label{10.5}
\kappa^\frac{1}{p}(T_k-T_{k-1})\left(\int_{T_k}^{T_{k+1}}y_\sigma(\tau)^\frac{1}{p}\,d\tau\right)^\frac{1}{p}\leq\\
2\int_{T_{k-1}}^{T_k}Y_\sigma(s)^\frac{1}{p}\,ds+
2H^\frac{1}{p}(T_k-T_{k-1})(T_{k+1}-T_{k-1})^\frac{1}{p},\ k\in\mathbb{N}. 
\end{multline}
Thus, introducing notation
\begin{align}
\label{Vk}
&V(k):=\sup_{\sigma\in\Sigma}\int_{T_k}^{T_{k+1}}Y_\sigma(\tau)^\frac{1}{p}\,d\tau,\ k\in \mathbb{N}\cup\{0\},\\
\label{Wk}
&W(k):=\left(\frac{V(k)}{T_{k+1}-T_k}\right)^\frac{1}{p},\ k\in \mathbb{N}\cup\{0\},
\end{align}
and taking supremum in $\sigma$ we rewrite \eqref{10.5} in the form
\begin{equation}
\label{app.6}
W(k)\leq 2\left(\frac{L}{\lambda\kappa}\right)^\frac{1}{p}\frac{1}{(T_k-T_{k-1})^\frac{1}{p}}W(k-1)^p+2\left(\frac{HL}{\lambda\kappa}\right)^\frac{1}{p},\ k\in\mathbb{N},
\end{equation}
where we have assumed that the sequence $\{T_k\}_{k=1}^\infty$ satisfies the following condition
\begin{equation}
\label{Tk.0}
T_{k+1}-T_{k-1}\leq L(T_{k+1}-T_k),\ k\in\mathbb{N},
\end{equation}
with large enough constant $L\geq 1$ to be specified below. Furthermore, let us assume for a moment, that the sequence $\{T_k\}_{k=0}^\infty$ is such that 
\begin{equation}
\label{Tk.1}
2\left(\frac{L}{\lambda\kappa}\right)^\frac{1}{p}\frac{1}{(T_k-T_{k-1})^\frac{1}{p}}W(k-1)^{p-1}\leq\frac{1}{2},\ {\rm for\ all\ }k\in\mathbb{N}.
\end{equation}
Then \eqref{app.6} reads
\begin{equation}
\label{app.7}
W(k)\leq \frac{1}{2}W(k-1)+2\left(\frac{HL}{\lambda\kappa}\right)^\frac{1}{p},\ k\in\mathbb{N}.
\end{equation}
Iterating \eqref{app.7} we conclude that
\begin{equation}
\label{app.7.5}
W(k)\leq \left(\frac{1}{2}\right)^kW(0)+4\left(\frac{HL}{\lambda\kappa}\right)^\frac{1}{p},\ k\in\mathbb{N}.
\end{equation}
Moreover, from \eqref{10.4} we see, that 
\begin{equation}
W(0)\leq \frac{2}{(\lambda\kappa T_1)^\frac{1}{p}}Y(0)^\frac{1}{p}+4\left(\frac{H}{\lambda\kappa}\right)^\frac{1}{p},
\end{equation}
Therefore \eqref{app.7.5} gives
\begin{equation}
\label{M(k)}
W(k)\leq \left(\frac{1}{2}\right)^k\frac{2}{(\lambda\kappa T_1)^\frac{1}{p}}Y(0)^\frac{1}{p}+8\left(\frac{H(L+1)}{\lambda\kappa}\right)^\frac{1}{p}=:M(k),\ k\in\{0\}\cup\mathbb{N}.
\end{equation}

Now let us define the sequence $\{T_k\}_{k=0}^\infty$ as follows: $T_0$ equals to $0$ and for $k\in\mathbb{N}$ it solves the equation
\begin{equation}
2\left(\frac{L}{\lambda\kappa}\right)^\frac{1}{p}\frac{1}{(T_k-T_{k-1})^\frac{1}{p}}(M(k-1))^{p-1}=\frac{1}{2},\ k\in\mathbb{N},
\end{equation} 
that is
\begin{equation}
\label{Tk.rec}
T_k=T_{k-1}+4^p\frac{L}{\lambda\kappa}(M(k-1))^{p(p-1)},\ k\in \mathbb{N},
\end{equation}
where $M(k)$ is given by \eqref{M(k)}. It is easy to see that equality \eqref{Tk.rec} indeed uniquely defines an increasing positive sequence $\{T_k\}_{k=0}^\infty$ (a bit more care is required for $k=1$). Let us check that such defined sequence satisfies \eqref{Tk.0} if $L$ is large. Clearly, for our choice of $T_k$ the inequality \eqref{Tk.0} takes the form 
\begin{equation}
\label{L.ch}
M(k-1)\leq \sqrt[p(p-1)]{L-1}M(k),\ k\in\mathbb{N}.
\end{equation} 
Since $M(k)$ satisfies the following recurrence relation
\begin{equation}
\label{M.rec}
M(k)=\frac{1}{2}M(k-1)+4\left(\frac{H(L+1)}{\lambda\kappa}\right)^\frac{1}{p},\ k\in\mathbb{N},
\end{equation}
we see that \eqref{L.ch} is valid, for example, for $L=2^{p(p-1)}+1$.

Let us verify that for our choice of $T_k$ the inequality \eqref{M(k)} holds true. Indeed, due to \eqref{app.6}, \eqref{M.rec}, we have
\begin{multline}
\label{W<M}
W(k)-M(k)\leq 2\left(\frac{L}{\lambda\kappa}\right)^\frac{1}{p}\frac{1}{(T_k-T_{k-1})^\frac{1}{p}}\left(W(k-1)\right)^p-\frac{1}{2}M(k-1)\leq\\
\left(2\left(\frac{L}{\lambda\kappa}\right)^\frac{1}{p}\frac{1}{(T_k-T_{k-1})^\frac{1}{p}}\left(M(k-1)\right)^2-\frac{1}{2}\right)M(k-1)=0,\ k\in\mathbb{N}, 
\end{multline} 
by induction.

Coming back to $V(k)$, and taking into account \eqref{Tk.rec}, \eqref{M(k)} we conclude that for any $Y(0)$ such that $Y(0)\leq R$ there exists time $T_R$ such that for all $t\geq T_R$ we have
\begin{equation}
\label{app.8}
\sup_{\sigma\in\Sigma}\int_{t}^{t+1}Y_\sigma(\tau)^\frac{1}{p}\,d\tau\leq Q_1(H),\ t\geq T_R, 
\end{equation}
for some monotone increasing function $Q_1$. Furthermore, assuming that $t\geq T_R+1$, integrating \eqref{10.4} with respect to $s$ from $t-1$ to $t$, taking into account \eqref{app.8} we derive
\begin{equation}
\label{app.9}
Y(t)\leq Q_2(H),\ t\geq T_R+1,
\end{equation}
for a monotone increasing function $Q_2$. It is easy to see that \eqref{app.9} yields the desired estimate \eqref{10.a.disest} for some monotone increasing $Q$ and constant $\beta>0$ that completes the proof.
\end{proof}
\begin{rem}
Let assumptions of Lemma \ref{lem.gr} be satisfied with $p>1$ but $H=0$. Then there exists a finite time $T_*= T_*(Y(0))<+\infty$ such that 
\begin{equation}
\label{Y=0}
Y(t)\equiv0,\qquad t\geq T_*.
\end{equation}
\end{rem}
\begin{proof}
Indeed, repeating the arguments of Lemma \ref{lem.gr} we, as previously, conclude that
\begin{equation}
W(k)\leq M(k).
\end{equation}
However, since $H=0$, we see that
\begin{equation}
\lim_{k\to+\infty}T_k=CY(0)^{p-1}:=T_*,
\end{equation} 
for some positive constant $C=C(p,\kappa)$. Since $Y_\sigma(t)$ is continuous we have
\begin{equation}
\lim_{k\to+\infty}\frac{\int_{T_k}^{T_{k+1}}Y_\sigma(s)ds}{T_{k+1}-T_k}=Y_\sigma(T_*).
\end{equation}
On the other hand, due to \eqref{M(k)} and the fact that $H=0$ we have $\lim_{k\to\infty} W(k)=0$. Thus $Y_\sigma(T_*)=0$ for all $\sigma\in\Sigma$. Since both $y_\sigma$ and $Y_\sigma$ are positive integrating \eqref{10.grineq} from $T_*$ to $T>T_*$ we obtain the desired result. 
\end{proof}
We proceed to the proof of Theorem \ref{th ex}.
\begin{proof}[proof of Theorem \ref{th ex}]
We give the formal derivation of the apriori estimate \eqref{est en}, highlighting neat points related to the work with uniformly local and weighted spaces and fractional laplacian. The existence of the solutions follows from this estimate and the fact that infinite-energy solutions can be approximated by finite-energy solutions. Multiplying the equation \eqref{eq main} by $\phi^2_{\eb,x_0}\Dt u+\delta\phi^2_{\eb,x_0}u$ with small enough $\delta>0$ (to be fixed below) we get 
\begin{multline}
\label{app.0}
\frac{1}{2}\frac{d}{dt}E_{\eb,x_0}(\xi_u)+\gamma\|\phi_{\eb,x_0}(-\Dx+1)^\frac{1}{4}\Dt u\|^2_{L^2(\Omega)}+\\
\delta\left(\|\phi_{\eb,x_0}\nabla u\|^2_{L^2(\Omega)}+\lambda_0\|\phi_{\eb,x_0}u\|^2_{L^2(\Omega)}\right)+\delta(\phi^2_{\eb,x_0},f(u)u)=H(t),
\end{multline}
where we have used the following manipulations with the terms involving fractional laplacian
\begin{multline}
\delta\gamma(\phi_{\eb,x_0}(-\Dx+1)^\frac{1}{2}\Dt u,\phi_{\eb,x_0}u)=\delta\gamma([\phi_{\eb,x_0},(-\Dx+1)^\frac{1}{4}](-\Dx+1)^\frac{1}{4}\Dt u,\phi_{\eb,x_0}u)+\\
\frac{\delta\gamma}{2}\frac{d}{dt}\|\phi_{\eb,x_0}(-\Dx+1)^\frac{1}{4}u\|^2_{L^2(\Omega)}+\delta\gamma(\phi_{\eb,x_0}(-\Dx+1)^\frac{1}{4}\Dt u, [(-\Dx+1)^\frac{1}{4},\phi_{\eb,x_0}]u),
\end{multline}
\begin{multline}
\gamma(\phi_{\eb,x_0}(-\Dx+1)^\frac{1}{2}\Dt u,\phi_{\eb,x_0}\Dt u)=\gamma([\phi_{\eb,x_0},(-\Dx+1)^\frac{1}{4}](-\Dx+1)^\frac{1}{4}\Dt u,\phi_{\eb,x_0}\Dt u)+\\
\gamma\|\phi_{\eb,x_0}(-\Dx+1)^\frac{1}{4}\Dt u\|^2_{L^2(\Omega)}+\gamma(\phi_{\eb,x_0}(-\Dx+1)^\frac{1}{4}\Dt u, [(-\Dx+1)^\frac{1}{4},\phi_{\eb,x_0}]\Dt u),
\end{multline}
and use notations $F(u):=\int_0^uf(v)dv$,
\begin{multline}
E_{\eb,x_0}(\xi_u):=\|\xi_u\|^2_{\Ee}+2(\phi^2_{\eb,x_0},F(u))-2(\phi_{\eb,x_0}g,\phi_{\eb,x_0}u)+\\
2\delta(\phi_{\eb,x_0}\Dt u,\phi_{\eb,x_0}u)+\delta\gamma\|\phi_{\eb,x_0}(-\Dx+1)^\frac{1}{4}u\|^2_{L^2(\Omega)},
\end{multline}
and the right hand side $H(t)$ is given by
\begin{align}
&H(t)=H_1(t)+H_2(t)+H_3(t),\\
&H_1(t)=\delta(\phi_{\eb,x_0}g,\phi_{\eb,x_0}u)+\delta\|\phi_{\eb,x_0}\Dt u\|^2_{L^2(\Omega)}-2(\phi_{\eb,x_0}\nabla\phi_{\eb,x_0}\Dt u,\nabla u)-\\
&\phantom{/////////////////////////////////////////////////////}2\delta(\phi_{\eb,x_0}\nabla\phi_{\eb,x_0},u\nabla u),\notag\\
&H_2(t)=-\delta\gamma([\phi_{\eb,x_0},(-\Dx+1)^\frac{1}{4}](-\Dx+1)^\frac{1}{4}\Dt u,\phi_{\eb,x_0}u)-\\
&\phantom{////////////////////////////////}\delta\gamma(\phi_{\eb,x_0}(-\Dx+1)^\frac{1}{4}\Dt u,[(-\Dx+1)^\frac{1}{4},\phi_{\eb,x_0}]u),\notag\\
&H_3(t)=-\gamma([\phi_{\eb,x_0},(-\Dx+1)^\frac{1}{4}](-\Dx+1)^\frac{1}{4}\Dt u,\phi_{\eb,x_0}\Dt u)-\\
&\phantom{////////////////////////////////}\gamma(\phi_{\eb,x_0}(-\Dx+1)^\frac{1}{4}\Dt u,[(-\Dx+1)^\frac{1}{4},\phi_{\eb,x_0}]\Dt u)\notag.
\end{align}
Let us estimate the right hand side $H(t)$. Choosing $\delta=\delta(\gamma)$ and $\eb=\eb(\delta)$ small it is easy to see that $H_1$-term admits the estimate
\begin{multline}
\label{app.H1-est}
|H_1|\leq C\|\phi_{\eb,x_0}g\|^2_{L^2(\Omega)}+\frac{\gamma}{6}\|\phi_{\eb,x_0}(-\Dx+1)^\frac{1}{2}\Dt u\|_{L^2(\Omega)}+\\
\frac{\delta}{4}\left(\|\phi_{\eb,x_0}\nabla u\|^2_{L^2(\Omega)}+\lambda_0\|\phi_{\eb,x_0}u\|^2_{L^2(\Omega)}\right).
\end{multline} 
Also, due to Proposition \ref{prop comest1} about the commutator estimates, we find
\begin{align}
\label{app.H2-est}
&|H_2|\leq \frac{\gamma}{6}\|\phi_{\eb,x_0}(-\Dx+1)^\frac{1}{2}\Dt u\|_{L^2(\Omega)}+\frac{\delta}{4}\left(\|\phi_{\eb,x_0}\nabla u\|^2_{L^2(\Omega)}+\lambda_0\|\phi_{\eb,x_0}u\|^2_{L^2(\Omega)}\right),\\
\label{app.H3-est}
&|H_3|\leq \frac{\gamma}{6}\|\phi_{\eb,x_0}(-\Dx+1)^\frac{1}{2}\Dt u\|^2_{L^2(\Omega)}.
\end{align} 
Combining \eqref{app.0}, \eqref{app.H1-est}-\eqref{app.H3-est}, \eqref{f(s)s>-M} and due to the fact that $\delta=\delta(\gamma)$ is small we obtain
\begin{equation}
\label{app.1}
\frac{1}{2}\frac{d}{dt}E_{\eb,x_0}(\xi_u)+\frac{\gamma}{4}\|\phi_{\eb,x_0}(-\Dx+1)^\frac{1}{4}\Dt u\|^2_{L^2(\Omega)}+
\frac{\delta}{4}\|\xi_u\|^2_{\Ee}\leq C_{\eb}(1+\|\phi_{\eb,x_0}g\|^2_{L^2(\Omega)}).
\end{equation}
Now let us estimate $E_{\eb,x_0}(\xi_u)$. Dissipative assumption \eqref{f(s)s>-M} implies that 
\begin{equation}
\label{F(u)>-au^2-C}
F(u)\geq -\frac{\lambda_0}{8}u^2-C,\ {\rm for\ all\ }u\in\R,
\end{equation}
for some absolute constant $C$ (depending on $\lambda_0$). Hence, choosing $\delta=\delta(\gamma,\lambda_0)$ even smaller if needed and choosing constant $C_\eb$ sufficiently large, we get
\begin{equation}
\label{xi from ab}
\mathbb{E}_{\eb,x_0}(\xi_u):=E_{\eb,x_0}(\xi_u)+C_{\eb}\left(1+\|\phi_{\eb,x_0}g\|^ 2_{L^2(\Omega)}\right)\geq\frac{1}{2}\|\xi_u\|^2_{\Ee}.
\end{equation}
On the other hand, for small $\eb>0$, we have
\begin{equation}
\label{xi from b}
\|\xi_u\|^2_{\Ee}\geq c\left(\mathbb{E}_{3\eb,x_0}(\xi_u)\right)^\frac{1}{3}-C_\eb\(1+\|\phi_{3\eb,x_0}g\|^ 2_{L^2(\Omega)}\), 
\end{equation}
for some positive absolute constant $c$ and positive constant $C_\eb$.
Indeed, using growth assumption \eqref{f.growth} and continuous embedding $ H^1_0(\Omega)\subset L^6(\Omega)$, we find
\begin{multline}
\mathbb{E}_{\eb,x_0}(\xi_u)\leq C\|\xi_u\|^2_{\Ee}+C_\eb\left(1+\|\phi_{\eb,x_0}g\|^2_{L^2(\Omega)}\right)+C(\phi^2_{\eb,x_0},1+|u|^6)\leq\\
C\|\xi_u\|^2_{\Ee}+C_\eb\left(1+\|\phi_{\eb,x_0}g\|^2_{L^2(\Omega)}\right)+C\|\phi_{\eb/3,x_0}u\|^6_{L^6(\Omega)}\leq\\
C\|\xi_u\|^6_{\E_{\{\eb/3,x_0\}}}+C_\eb\left(1+\|\phi_{\eb,x_0}g\|^2_{L^2(\Omega)}\right),
\end{multline}
for some absolute constant $C$ as long as $\eb=\eb(\lambda_0)$ is small. Taking the cubic root of the above inequality we end up with \eqref{xi from b}.

Thus, from \eqref{app.1}, \eqref{xi from ab}, \eqref{xi from b} we obtain
\begin{equation}
\label{app.3}
\frac{d}{dt}\mathbb{E}_{\eb,x_0}(\xi_u)+\kappa\left(\mathbb{E}_{3\eb,x_0}(\xi_u)\right)^\frac{1}{3}+\kappa\|\phi_{\eb,x_0}(-\Dx+1)^\frac{1}{4}\Dt u\|^2_{L^2(\Omega)}\leq C_\eb\left(1+\|g\|^ 2_{L^2_b(\Omega)}\right),
\end{equation} 
uniformly with respect to $x_0\in\Omega$, where $\kappa=\kappa(\gamma,\lambda_0)>0$ is small enough absolute constant. Also from this moment we suppose that $\eb$ is fixed.

We claim that inequality \eqref{app.3} implies the desired estimate \eqref{est en}. To this end we would like to apply Lemma \ref{lem.gr}. Let us denote 
\begin{equation}
Y_{x_0}(t)=\mathbb{E}_{\eb,x_0}(\xi_u(t)),\qquad y_{x_0}(t)=\mathbb{E}_{3\eb,x_0}(\xi_u(t)).
\end{equation}
So we only need to check conditions \eqref{10.equiv}, \eqref{10.int.equiv}. Completing the squares and taking into account \eqref{F(u)>-au^2-C} we see that that the right parts of the inequalities \eqref{10.equiv}, \eqref{10.int.equiv} are fulfilled with $\Lambda =1$ (they hold also without supremum in $x_0$) and the left parts of the inequalities \eqref{10.equiv}, \eqref{10.int.equiv} are valid exactly due to supremum in $x_0$ for some $\lambda =\lambda(\eb)$. Therefore by Lemma \ref{lem.gr} and \eqref{xi from ab} we obtain
\begin{equation}
\label{app.4}
\|\xi_u(t)\|^2_{\E_b}\leq Q_\eb(\|\xi_0\|_{\E_b})e^{-\beta t}+Q_\eb(\|g\|_{L^2_b(\Omega)}),\ t\geq 0,
\end{equation}
for some monotone increasing function $Q_\eb$ and $\beta>0$ independent of solution $u$. In addition, integrating \eqref{app.3} from $\max\{0,t-1\}$ to $t$, taking supremum in $x_0$ and using \eqref{app.4} we get 
\begin{equation}
\sup_{x_0\in\Omega}\int_{\max\{0,t-1\}}^t\|\phi_{\eb,x_0}(-\Dx+1)^\frac{1}{4}\Dt u(s)\|^2_{L^2(\Omega)}ds\leq\\
 Q_{\eb}(\|\xi_0\|_{\E_b})e^{-\beta t}+Q_{\eb}(\|g\|_{L^2_b(\Omega)}),\quad t\geq 0,
\end{equation}
for some monotone increasing function $Q_\eb$ and $\beta>0$ independent of $u(t)$. Thus the proof is complete.  
\end{proof}
\begin{rem}
We note that the above proof becomes much less technically involved if in addition to dissipative assumption \eqref{f(s)s>-M} one also assumes extra condition 
\begin{equation}
\label{10}
f(u)u\geq aF(u)-K,\ F(u)=\int_0^uf(v)dv,
\end{equation}
for some positive constants $a$, $K$. In this case instead of \eqref{app.3} we have standard Gronwall estimate. However, as we have seen, assumption \eqref{10} can be omitted even in infinite-energy case. 
\end{rem}

\end{document}